\let\savedalpha\alpha
\let\alpha\savedalpha
\numberwithin{equation}{section}
\newtheorem{definition}{Definition}[section]
\newtheorem{lemma}[definition]{Lemma}
\newtheorem{theorem}[definition]{Theorem}
\newtheorem{proposition}[definition]{Proposition}
\newtheorem{corollary}[definition]{Corollary}
\newtheorem{remark}[definition]{Remark}
\begin{document}

\newcommand{\fin}{$\Box$\\}
\newcommand{\C}{\mathbf{C}}
\newcommand{\R}{\mathbf{R}}
\newcommand{\Q}{\mathbf{Q}}
\newcommand{\N}{\mathbf{N}}
\renewcommand{\SS}{\mathbf{S}}
\newcommand{\PP}{\mathbf{P}}
\newcommand{\EE}{\mathbf{E}}
\newcommand{\B}{\hbox{$\cal B$}}
\setlength{\parindent}{0pt}
\newcommand{\ds}{\displaystyle}
\newcommand{\saut}[1]{\hfill\\[#1]}
\newcommand{\difrac}{\displaystyle \frac}
\newcommand{\dist}{\textrm{dist}}
\newcommand{\diam}{\mathrm{diam}}
\newcommand{\dimH}{\dim_{\mathcal{H}} }
\newcommand{\recov}{\textrm{Recov}}
\newcommand{\var}{\mathrm{Var}}
\newcommand{\gr}{\mathrm{Gr}}
\newcommand{\rg}{\mathrm{Rg}}
\newcommand{\mbf}{\textbf}
\newcommand{\levy}{\mathscr{B}}
\newcommand{\sheet}{\mathbbm{B}}
\newcommand{\sifbm}{\mathbf{B}}
\newcommand{\alphar}{\texttt{\large $\boldsymbol{\alpha}$}}
\newcommand{\alphalocbb}{\widetilde{\mathbb{\alpha}}} 
\newcommand{\alphainfbb}{\underline{\mathbb{\alpha}}} 

\title[Local regularity and Hausdorff dimension of Gaussian fields]{From almost sure local regularity to almost sure Hausdorff dimension for Gaussian fields 
}

\author{Erick Herbin}
\address{Ecole Centrale Paris, Grande Voie des Vignes, 92295 Chatenay-Malabry, France} \email{erick.herbin@ecp.fr}

\author{Benjamin Arras}
\email{benjamin.arras@student.ecp.fr}
\author{Geoffroy Barruel}
\email{geoffroy.barruel@student.ecp.fr}

\subjclass[2000]{60\,G\,15, 60\,G\,17, 60\,G\,10, 60\,G\,22, 60\,G\,60}
\keywords{Gaussian processes, Hausdorff dimension, (multi)fractional Brownian motion, multiparameter processes, H\"older regularity, stationarity.}

\begin{abstract}
Fine regularity of stochastic processes is usually measured in a local way by local H\"older exponents and in a global way by fractal dimensions. 
Following a previous work of Adler, we connect these two concepts for multiparameter Gaussian random fields. More precisely, we prove that almost surely the Hausdorff dimensions of the range and the graph in any ball $B(t_0,\rho)$ are bounded from above using the local H\"older exponent at $t_0$.
We define the deterministic local sub-exponent of Gaussian processes, which allows to obtain an almost sure lower bound for these dimensions.
Moreover, the Hausdorff dimensions of the sample path on an open interval are controlled almost surely by the minimum of the local exponents.

Then, we apply these generic results to the cases of the multiparameter fractional Brownian motion, the multifractional Brownian motion whose regularity function $H$ is irregular and the generalized Weierstrass function, whose Hausdorff dimensions were unknown so far.
\end{abstract}

\maketitle

\section{Introduction}

Since the 70's, the regularity of stochastic processes used to be considered in different ways. 
On one hand, the local regularity of sample paths is usually measured by local moduli of continuity and H\"older exponents (e.g. \cite{dudley, 2ml, Orey.Pruitt(1973), Yadrenko}). And on the other hand, the global regularity can be quantified by the global H\"older exponent (e.g. \cite{Xiao2009, Xiao2010}) or by fractal dimensions (Hausdorff dimension, box-counting dimension, packing dimension, \dots) and respective measures of the graph of the processes (e.g. \cite{Berman(1972), Pruitt1969, Strassen(1964)}).

As an example, if $B^H=\{B^H_t;\;t\in\R_+\}$ is a real-valued fractional Brownian motion (fBm) with self-similarity index $H\in (0,1)$, the pointwise H\"older exponent at any point $t\in\R_+$ satisfy $\alphar_{B^H}(t) = H$ almost surely.
Besides, the Hausdorff dimension of the graph of $B^H$ is given by $\dimH(\gr_{B^H}) = 2-H$ almost surely.
In this specific case, we observe a connection between the global and local points of view of regularity for fBm.
Is it possible to obtain some general result, for some larger class of processes?

\medskip

In \cite{Adler77}, Adler showed that the Hausdorff dimension of the graph of a $\R^d$-valued Gaussian field $X=\{X^{(i)}_t;\;1\leq i\leq p,\; t\in\R^N_+\}$, made of i.i.d. Gaussian coordinate processes $X^{(i)}$ with stationary increments, can be deduced from the local behavior of its incremental variance. More precisely, when the quantities $\sigma^2(t)=\EE[|X^{(i)}_{t+t_0}-X^{(i)}_{t_0}|^2]$ independent of $1\leq i\leq p$ and $t_0\in\R^N_+$ satisfy 
\begin{equation}\label{ineqAdler}
\forall \epsilon>0,\quad
|t|^{\alpha+\epsilon} \leq \sigma(t) \leq |t|^{\alpha-\epsilon}
\quad\textrm{as } t\rightarrow 0,
\end{equation}
the Hausdorff dimension of the graph $\gr_X=\{(t,X_t):t\in\R^N_+\}$ of $X$ is proved to be
\begin{align*}
\dimH(\gr_X) &= \min\left\{ \frac{N}{\alpha}, N+d (1-\alpha) \right\}.
\end{align*}
This result followed Yoder's previous works in \cite{Yoder} where the Hausdorff dimensions of the graph and also the range $\rg_X=\{ X_t: t\in\R^N_+ \}$ were obtained for a multiparameter Brownian motion in $\R^d$. 
As an application to Adler's result, the Hausdorff dimension of the graph of fractional Brownian motion can be deduced from the local H\"older exponents of its sample paths.
As an extension of this result, Xiao has completely determined in \cite{Xiao95} the Hausdorff dimensions of the image $X(K)$ and the graph $\gr_X(K)$ of a Gaussian field $X$ as previously, for a compact set $K\subset\R^N_+$, in function of $\dimH K$.

\medskip

In this paper, we aim at extending Adler's result to Gaussian random fields with non-stationary increments. We will see that this goal requires a localization of Adler's index $\alpha$ along the sample paths.
There is a large litterature about local regularity of Gaussian processes. We refer to \cite{AT, davar, ledouxtalagrand, marcusrosen} for a contemporary and detailled review of it.
This field of research is still very active, especially in the multiparameter context, and a non-exhaustive list of authors and recent works in this area includes Ayache \cite{AyLeVe, Ayache.Shieh.ea(2011)}, Mountford \cite{Baraka.Mountford.ea(2009)}, Dozzi \cite{dozzi07}, Khoshnevisan \cite{KX05}, Lawler \cite{lawler2011}, L\'evy V\'ehel \cite{2ml}, Lind \cite{lind08} and Xiao \cite{mwx, tudorxiao07, Xiao95, Xiao2009, Xiao2010}.

Usually the local regularity of an $\R^d$-valued stochastic process $X$ at $t_0\in\R^N_+$ is measured by the pointwise and local H\"older exponents $\alphar_X(t_0)$ and $\widetilde{\alphar}_X(t_0)$ defined by
\begin{align}
\alphar_X(t_0) &= \sup\left\{ \alpha>0: \limsup_{\rho\rightarrow 0}
\sup_{s,t\in B(t_0,\rho)} \frac{\|X_t-X_s\|}{\rho^{\alpha}} < +\infty \right\},\nonumber\\
\widetilde{\alphar}_X(t_0) &= \sup\left\{ \alpha>0: \lim_{\rho\rightarrow 0}
\sup_{s,t\in B(t_0,\rho)} \frac{\|X_t-X_s\|}{\| t-s \|^{\alpha}} < +\infty \right\}.\label{eq:localHolder-exp}
\end{align}

A general connection between the local structure of a stochastic process and the Hausdorff dimension of its graph has already been studied. In \cite{BCI03}, the specific case of local self-similarity property has been considered.
Here, we show how the local H\"older regularity of a Gaussian random field allows to estimate the Hausdorff dimensions of its range $\rg_X$ and its graph $\gr_X$.

Recently in \cite{2ml}, the quantities $\EE[|X_t - X_s|^2]$ when $s,t$ are close to $t_0\in\R^N_+$ are proved to capture a lot of informations about the almost sure local regularity. More precisely, the almost sure $2$-microlocal frontier of $X$ at $t_0$ allows to predict the evolution of the local regularity at $t_0$ under fractional integrations or derivations. Particularly, as special points of the $2$-microlocal frontier, both pointwise and local H\"older exponents can be derived from the study of $\EE[|X_t - X_s|^2]$.
For all $t_0\in\R_+^N$, we define in Section~\ref{sec:subexp} the exponents $\alphainfbb_X(t_0)$ and $\alphalocbb_X(t_0)$ of a real-valued Gaussian process $X$ as the minimum of $\alphainfbb>0$ and maximum of $\alphalocbb>0$ such that 
\begin{equation*}
\forall s,t\in B(t_0,\rho_0),\quad
\| t-s \|^{2\, \alphainfbb} \leq \EE[|X_t-X_s|^2]
\leq \| t-s \|^{2\, \alphalocbb},
\end{equation*}
for some $\rho_0>0$.
The exponents of the components $X^{(i)}$ of a Gaussian random field $X=(X^{(1)},\dots,X^{(d)})$ allow to get almost sure lower and upper bounds for quantities, 
$$\lim_{\rho\rightarrow 0}\dimH(\gr_X(B(t_0,\rho)))\quad\textrm{and}\quad\lim_{\rho\rightarrow 0}\dimH(\rg_X(B(t_0,\rho))).$$
After the statement of the main result in Section~\ref{sec:main}, the almost sure local Hausdorff dimensions are given uniformly in $t_0\in\R_+^N$ and the global dimensions $\dimH(\gr_X(I))$ and $\dimH(\rg_X(I))$ are almost surely bounded for any open interval $I\subset\R_+^N$, in function of $\inf_{t\in I}\alphainfbb_{X^{(i)}}(t)$ and $\inf_{t\in I}\alphalocbb_{X^{(i)}}(t)$.
Sections~\ref{sec:up} and \ref{sec:low} are devoted to the proofs of the upper bound and lower bound of the Hausdorff dimensions respectively.

\medskip

In Section~\ref{sec:app}, the main result is applied to some stochastic processes whose increments are not stationary and whose Hausdorff dimension is still unknown.

The first one is the multiparameter fractional Brownian motion (MpfBm), derived from the set-indexed fractional Brownian motion introduced in \cite{sifBm, MpfBm}. On the contrary to fractional Brownian sheet studied in \cite{AyXiao, WuXiao}, the MpfBm does not satisfy the increment stationarity property. Then the study of the local regularity of its sample path allows to determine the Hausdorff dimension of its graph in Section~\ref{sec:mpfbm}.

The second application is the multifractional Brownian motion (mBm), introduced in \cite{RPJLV,BJR} as an extension of the classical fractional Brownian motion where the self-similarity index $H\in (0,1)$ is substituted with a function $H:\R_+\rightarrow (0,1)$ in order to allow the local regularity to vary along the sample path.
The immediate consequence is the loss of the increment stationarity property. Then, the knowledge of local H\"older regularity implies the Hausdorff dimensions of the graph and the range of the mBm.
In the case of a regular function $H$, the almost sure value of $\lim_{\rho\rightarrow 0}\dimH(\gr_X(B(t_0,\rho)))$ was already known to be $2-H(t_0)$ for any fixed $t_0\in\R_+$. In Section~\ref{sec:mbm}, this almost sure result is proved uniformly in $t_0$. The new case of an irregular function $H$ is also considered.

The last application of this article concerns the generalized Weierstrass function, defined as a stochastic Gaussian version of the well-known Weierstrass function, where the index varies along the trajectory. The local H\"older regularity is determined in Section~\ref{sec:GW} and consequentely, the Hausdorff dimension of its sample path.

\section{Hausdorff dimension of the sample paths of Gaussian random fields}

In this paper, we denote by {\em multiparameter Gaussian random field} in $\R^d$, a stochastic process $X=\{ X_t;\;t\in\R_+^N \}$, where $X_t = (X^{(1)}_t,\dots,X^{(d)}_t)\in\R^d$ for all $t\in\R^N_+$ and the coordinate processes $X^{(i)}=\{ X^{(i)}_t;\;t\in\R^N_+\}$ are independent real-valued Gaussian processes with the same law.

\subsection{A new local exponent}\label{sec:subexp}

According to \cite{2ml}, the local regularity of a Gaussian process $X=\{X_t;\;t\in\R^N_+\}$ can be obtained by the {\em deterministic local H\"older exponent}
\begin{equation}\label{DetLocalHolder}
\alphalocbb_X(t_0) = \sup\left\{ \alpha>0 :
\lim_{\rho\rightarrow 0} \sup_{s,t\in B(t_0,\rho)} \frac{\EE[|X_t-X_s|^2]}{\|t-s\|^{2\alpha}}
<+\infty \right\}.
\end{equation}
More precisely, the local H\"older exponent of $X$ at any $t_0\in\R_+^N$ is proved to satisfy $\widetilde{\alphar}_X(t_0)=\alphalocbb_X(t_0)$ a.s.

In order to get a localized version of (\ref{ineqAdler}), we need to introduce a new exponent $\alphainfbb_X(t_0)$,  the {\em deterministic local sub-exponent} at any $t_0\in\R_+^N$,
\begin{align}\label{DetUpLocalHolder}
\alphainfbb_X(t_0) &= \inf\left\{ \alpha>0 :
\lim_{\rho\rightarrow 0} \inf_{s,t\in B(t_0,\rho)} \frac{\EE[|X_t-X_s|^2]}{\|t-s\|^{2\alpha}}
=+\infty \right\} \\
&= \sup\left\{ \alpha>0 :
\lim_{\rho\rightarrow 0} \inf_{s,t\in B(t_0,\rho)} \frac{\EE[|X_t-X_s|^2]}{\|t-s\|^{2\alpha}}
=0 \right\}. \nonumber
\end{align}

As usually, this double definition relies on the equality
\begin{align*}
\frac{\EE[|X_t-X_s|^2]}{\|t-s\|^{2 \alpha'}}
= \frac{\EE[|X_t-X_s|^2]}{\|t-s\|^{2 \alpha}}
\times \| t-s \|^{2(\alpha-\alpha')}.
\end{align*}

\

\begin{lemma}\label{lemcovinc}
Let $X=\{X_t;\;t\in\R^N_+\}$ be a multiparameter Gaussian process.\\
Consider $\alphalocbb_X(t_0)$ and $\alphainfbb_X(t_0)$ the deterministic local H\"older exponent and local sub-exponent of $X$ at $t_0\in\R_+^N$ (as defined in (\ref{DetLocalHolder}) and (\ref{DetUpLocalHolder})).

For any $\epsilon>0$, there exists $\rho_0>0$ such that
\begin{equation*}
\forall s,t\in B(t_0,\rho_0),\quad
\| t-s \|^{2\, \alphainfbb_X(t_0) +\epsilon} \leq \EE[|X_t-X_s|^2]
\leq \| t-s \|^{2\, \alphalocbb_X(t_0) -\epsilon}.
\end{equation*}
\end{lemma}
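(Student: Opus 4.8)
The plan is to read the two inequalities directly off the definitions (\ref{DetLocalHolder}) and (\ref{DetUpLocalHolder}), the only tool being the elementary identity displayed just above the statement. Fix $\epsilon>0$. I would first assume $\alphalocbb_X(t_0),\alphainfbb_X(t_0)\in(0,\infty)$ and $\epsilon<2\,\alphalocbb_X(t_0)$; the complementary cases are trivial, since then $2\,\alphalocbb_X(t_0)-\epsilon\le0$ while $\EE[|X_t-X_s|^2]\le1$ for $s,t$ close enough to $t_0$ (it is bounded by a constant multiple of $\|t-s\|^{2\alpha}$ there, for some admissible $\alpha>0$ in (\ref{DetLocalHolder})), and the lower bound, whose exponent $2\,\alphainfbb_X(t_0)+\epsilon$ is always positive, is unaffected. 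I would prove the upper and lower bounds separately, producing radii $\rho_1,\rho_2>0$, and set $\rho_0=\min(\rho_1,\rho_2)$.

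\emph{Upper bound.} Since $\rho\mapsto\sup_{s,t\in B(t_0,\rho)}\EE[|X_t-X_s|^2]/\|t-s\|^{2\alpha}$ is non-decreasing, the limit in (\ref{DetLocalHolder}) exists in $[0,+\infty]$, and the identity shows the set of $\alpha>0$ making it finite is an interval with right endpoint $\alphalocbb_X(t_0)$: if it is finite for $\alpha'$ and $0<\alpha''<\alpha'$, then $\EE[|X_t-X_s|^2]/\|t-s\|^{2\alpha''}=(\EE[|X_t-X_s|^2]/\|t-s\|^{2\alpha'})\,\|t-s\|^{2(\alpha'-\alpha'')}$ with $\|t-s\|^{2(\alpha'-\alpha'')}\le(2\rho)^{2(\alpha'-\alpha'')}\to0$. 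By definition of the supremum I may thus pick $\alpha$ with $\alphalocbb_X(t_0)-\epsilon/2<\alpha\le\alphalocbb_X(t_0)$ and $L:=\lim_{\rho\to0}\sup_{s,t\in B(t_0,\rho)}\EE[|X_t-X_s|^2]/\|t-s\|^{2\alpha}<+\infty$, hence $\rho_1>0$ with $\EE[|X_t-X_s|^2]\le(L+1)\|t-s\|^{2\alpha}$ on $B(t_0,\rho_1)$. Putting $\delta:=2\alpha-2\,\alphalocbb_X(t_0)+\epsilon>0$ one has $(L+1)\|t-s\|^{2\alpha}=(L+1)\|t-s\|^{\delta}\|t-s\|^{2\,\alphalocbb_X(t_0)-\epsilon}$, and shrinking $\rho_1$ so that $(L+1)(2\rho_1)^{\delta}\le1$ (recall $\|t-s\|\le2\rho_1$) yields the claimed bound $\EE[|X_t-X_s|^2]\le\|t-s\|^{2\,\alphalocbb_X(t_0)-\epsilon}$ on $B(t_0,\rho_1)$.

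\emph{Lower bound.} This is symmetric: the identity shows the set of $\alpha>0$ with $\lim_{\rho\to0}\inf_{s,t\in B(t_0,\rho)}\EE[|X_t-X_s|^2]/\|t-s\|^{2\alpha}=+\infty$ is an interval with left endpoint $\alphainfbb_X(t_0)$ (now using $\|t-s\|^{-2(\alpha''-\alpha')}\ge1$ for $\|t-s\|\le1$; this is also the equivalence of the two expressions for $\alphainfbb_X(t_0)$ in (\ref{DetUpLocalHolder})). By definition of the infimum I may pick $\alpha$ with $\alphainfbb_X(t_0)\le\alpha<\alphainfbb_X(t_0)+\epsilon/2$ along which this limit is $+\infty$, hence $\rho_2\le1/2$ with $\EE[|X_t-X_s|^2]\ge\|t-s\|^{2\alpha}$ on $B(t_0,\rho_2)$. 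Since $2\alpha<2\,\alphainfbb_X(t_0)+\epsilon$ and $\|t-s\|\le2\rho_2\le1$, this gives $\EE[|X_t-X_s|^2]\ge\|t-s\|^{2\alpha}\ge\|t-s\|^{2\,\alphainfbb_X(t_0)+\epsilon}$ on $B(t_0,\rho_2)$, and with $\rho_0=\min(\rho_1,\rho_2)$ the proof is complete.

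I do not anticipate any genuine obstacle: the whole proof is bookkeeping around the quoted identity together with the trivial bound $\|t-s\|\le2\rho$. The two points that need a little attention are the verification that the admissible-exponent sets are intervals — so that a strict inequality against $\alphalocbb_X(t_0)$ or $\alphainfbb_X(t_0)$ really produces a usable exponent — and keeping track of the sign and of the smallness of the auxiliary power $\|t-s\|^{\pm\delta}$.
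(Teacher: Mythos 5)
Your proof is correct and follows essentially the same route as the paper's: both read the two bounds off the definitions via the displayed identity, the paper by noting directly that the limit at exponent $\alphalocbb_X(t_0)-\epsilon$ is $0$ and at $\alphainfbb_X(t_0)+\epsilon$ is $+\infty$, you by passing through an intermediate exponent within $\epsilon/2$ of the endpoint and then absorbing the constant and the residual power of $\|t-s\|$. The extra bookkeeping (and the degenerate-exponent caveat) is harmless; the substance is identical.
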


\ 

\begin{proof}
For any $\epsilon >0$, the definition of $\alphalocbb_X(t_0)$ leads to
\begin{equation*}
\lim_{\rho\rightarrow 0} \sup_{s,t\in B(t_0,\rho)} \frac{\EE[|X_t-X_s|^2]}{\|t-s\|^{2\,\alphalocbb_X(t_0) - \epsilon}} =0.
\end{equation*}
Then there exits $\rho_1>0$ such that
\begin{equation*}
0<\rho\leq\rho_1 \Rightarrow \forall s,t\in B(t_0,\rho),\ \EE[|X_t-X_s|^2] \leq \|t-s\|^{2\,\alphalocbb_X(t_0) - \epsilon}
\end{equation*}
and then
\begin{equation*}
\forall s,t\in B(t_0,\rho_1),\quad \EE[|X_t-X_s|^2] \leq \|t-s\|^{2\,\alphalocbb_X(t_0) - \epsilon}.
\end{equation*}

\vspace{10pt}
For the lower bound, we use the definition of the new exponent $\alphainfbb_X(t_0)$
\begin{align*}
\lim_{\rho\rightarrow 0} \inf_{s,t\in B(t_0,\rho)} \frac{\EE[|X_t-X_s|^2]}{\|t-s\|^{2\,\alphainfbb_X(t_0) + \epsilon}} =+\infty.
\end{align*}
Then, there exists $\rho_2>0$ such that
\begin{equation*}
0<\rho\leq\rho_2 \Rightarrow \forall s,t\in B(t_0,\rho),\ \EE[|X_t-X_s|^2] \geq \|t-s\|^{2\,\alphainfbb_X(t_0) + \epsilon}
\end{equation*}
and then
\begin{equation*}
\forall s,t\in B(t_0,\rho_2),\quad 
\EE[|X_t-X_s|^2] \geq \|t-s\|^{2\,\alphainfbb_X(t_0) + \epsilon}.
\end{equation*}
The result follows setting $\rho_0=\rho_1\wedge\rho_2$.
\end{proof}

\

From the previous result, we can derive an ordering relation between the deterministic local sub-exponent and the deterministic local H\"older exponent.
We have
\begin{equation}\label{ineqexp}
\forall t_0\in\R^N_+,\quad
\alphalocbb_X(t_0) \leq \alphainfbb_X(t_0).
\end{equation}

\subsection{Main result: The Hausdorff dimension of Gaussian random fields}\label{sec:main}

For sake of self-containess of the paper, we recall the basic frame of the Hausdorff dimension definition.

For all $\delta>0$, we denote by $\delta$-covering of a non-empty subset $E$ of $\R^d$. all collection $A = (A_i)_{i\in\N}$ such that
\begin{itemize}
\item $\forall i \in \N, \diam(A_i) < \delta$, where $\diam(A_i)$ denotes $\sup(\|x-y\|;\; x,y\in A_i)$ ; and
\item $E \subseteq \bigcup_{i \in \N}A_i$. 
\end{itemize}
We denote by $\Sigma_\delta(E)$ the set of $\delta$-covering de $E$ and by $\Sigma(E)$ the set of the covering of $E$. We define
$$\mathcal{H}^s_{\delta}(E)=\inf_{A\in{\Sigma_\delta(E)}}\left\{\sum_{i=1}^{\infty}\diam(A_i)^s\right\},$$
and the Hausdorff measure of $E$ by
\begin{align*}
\mathcal{H}^s(E)=\lim_{\delta\rightarrow 0}\mathcal{H}^s_{\delta}(E)
= \begin{cases}
+\infty & \text{si } 0 \leq s < \dimH(E), \\
0 & \text{si } s > \dimH(E).
\end{cases}
\end{align*}
The quantity $\dimH(E)$ is the Hausdorff dimension of $E$. It is defined by 
$$\dimH(E)=\inf \left\{s \in \R_+: \mathcal{H}^s(E)=0\right\}=\sup\left\{s \in \R_+: \mathcal{H}^s(E)=+\infty\right\}.$$

\

For any random field $X=\{X^{(i)}_t;\;1\leq i\leq p,\;t\in\R^N_+\}$ made of i.i.d. Gaussian coordinate processes with possibly non-stationary increments, the Hausdorff dimensions of the range $\rg_X(B(t_0,\rho)) = \{ X_t;\; t\in B(t_0,\rho)\}$ and the graph $\gr_X(B(t_0,\rho)) = \{ (t,X_t);\; t\in B(t_0,\rho)\}$ of $X$ in the ball $B(t_0,\rho)$ of center $t_0$ and radius $\rho>0$ can be estimated when $\rho$ goes to $0$, using the deterministic local H\"older exponent and the deterministic local sub-exponent of $X^{(i)}$ at $t_0$.

In the following statements and in the sequel of the paper, the deterministic local H\"older exponent $\alphalocbb_{X^{(i)}}(t_0)$ and the deterministic local sub-exponent $\alphainfbb_{X^{(i)}}(t_0)$ of $X^{(i)}$ at any $t_0\in\R^N_+$ are independent of $1\leq i\leq d$, since the component $X^{(i)}$ are assumed to be i.i.d.

\medskip

\begin{theorem}[Pointwise almost sure result]\label{thmain}
Let $X=\{X_t;\;t\in\R^N_+\}$ be a multi-parameter Gaussian random field in $\R^d$. Let $\alphalocbb_{X^{(i)}}(t_0)$ be the deterministic local H\"older exponent and $\alphainfbb_{X^{(i)}}(t_0)$ the deterministic local sub-exponent of $X^{(i)}$ at $t_0\in\R_+^N$ as defined in (\ref{DetLocalHolder}) and (\ref{DetUpLocalHolder}), independent of $1\leq i\leq d$.
Assume that $\alphalocbb_{X^{(i)}}(t_0)>0$.

Then, the Hausdorff dimensions of the graph and the range of $X$ satisfy almost surely,
\begin{align*}
\left. \begin{array}{l r}
\textrm{if } N\leq d\ \alphainfbb_{X^{(i)}}(t_0), 
& N/\alphainfbb_{X^{(i)}}(t_0) \\
\textrm{if } N> d\ \alphainfbb_{X^{(i)}}(t_0), 
& N + d(1-\alphainfbb_{X^{(i)}}(t_0))
\end{array} \right\} 
\leq &\lim_{\rho\rightarrow 0}\dimH(\gr_X(B(t_0,\rho))) \\
&\leq \min\left\{\frac{N}{\alphalocbb_{X^{(i)}}(t_0)} ; N + d(1-\alphalocbb_{X^{(i)}}(t_0))\right\}
\end{align*}
and
\begin{align*}
\left. \begin{array}{l r}
\textrm{if } N\leq d\ \alphainfbb_{X^{(i)}}(t_0), 
& N/\alphainfbb_{X^{(i)}}(t_0) \\
\textrm{if } N> d\ \alphainfbb_{X^{(i)}}(t_0), 
& d
\end{array} \right\}
\leq \lim_{\rho\rightarrow 0}\dimH(\rg_X(B(t_0,\rho))) 
\leq \min\left\{\frac{N}{\alphalocbb_{X^{(i)}}(t_0)} ; d\right\}.
\end{align*}
\end{theorem}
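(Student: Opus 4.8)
The plan is to establish the two halves of the theorem separately, upgrading in each case the two‑sided incremental‑variance bound of Lemma~\ref{lemcovinc} into a geometric statement about the sample paths.

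\emph{Upper bounds.} Fix $\epsilon>0$ and take $\rho_0=\rho_0(\epsilon)$ from Lemma~\ref{lemcovinc}, so that $\EE[|X^{(i)}_t-X^{(i)}_s|^2]\le\|t-s\|^{2(\alphalocbb_{X^{(i)}}(t_0)-\epsilon)}$ for all $s,t\in B(t_0,\rho_0)$. Since the coordinates are Gaussian, all moments of $\|X_t-X_s\|$ are comparable, hence $\EE[\|X_t-X_s\|^{2k}]\le C_k\|t-s\|^{2k(\alphalocbb_{X^{(i)}}(t_0)-\epsilon)}$ for every integer $k$; Kolmogorov's continuity theorem applied on the bounded set $B(t_0,\rho_0)$, together with $k\to\infty$, provides a continuous modification of $X$ whose restriction to $B(t_0,\rho_0)$ is almost surely $\gamma$‑H\"older for every $\gamma<\alphalocbb_{X^{(i)}}(t_0)-\epsilon$, with some finite random constant. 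For such $\gamma\le 1$ I cover $B(t_0,\rho)\subseteq B(t_0,\rho_0)$ by $\asymp(\rho/\delta)^N$ cubes of side $\delta$: the image of each cube has diameter $\lesssim\delta^\gamma$, which gives $\mathcal{H}^s(\rg_X(B(t_0,\rho)))=0$ for $s>N/\gamma$, and with the trivial bound $\dimH(\rg_X(B(t_0,\rho)))\le d$ this yields the range estimate. For the graph, the piece over each cube lies in a box with $N$ edges of length $\asymp\delta$ and $d$ edges of length $\asymp\delta^\gamma$, and covering that box either by one cube of side $\asymp\delta^\gamma$ or by $\asymp\delta^{-d(1-\gamma)}$ cubes of side $\asymp\delta$ produces the two competing estimates $\dimH(\gr_X(B(t_0,\rho)))\le N/\gamma$ and $\le N+d(1-\gamma)$. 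Letting $\gamma\uparrow\alphalocbb_{X^{(i)}}(t_0)$ and using that $\rho\mapsto\dimH$ is nondecreasing gives the stated upper bounds for $\lim_{\rho\to0}$.

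\emph{Lower bound, range.} Here I use the Frostman energy method: if $\mu$ is a finite Borel measure carried by a Borel set $E$ with $\mu(E)>0$ and finite $s$‑energy $I_s(\mu)=\iint\|x-y\|^{-s}\,d\mu(x)\,d\mu(y)$, then $\dimH E\ge s$. Fix $\epsilon>0$, take $\rho_0$ from Lemma~\ref{lemcovinc}, and write $a=\alphainfbb_{X^{(i)}}(t_0)+\epsilon$, so $\EE[|X^{(i)}_t-X^{(i)}_s|^2]\ge\|t-s\|^{2a}$ on $B(t_0,\rho_0)$. For $\rho\le\rho_0$ let $\mu$ be the image of Lebesgue measure on $B(t_0,\rho)$ under $X$, a positive finite Borel measure carried by the ($\sigma$‑compact, hence Borel) set $\rg_X(B(t_0,\rho))$; by Tonelli $\EE[I_s(\mu)]=\iint_{B(t_0,\rho)^2}\EE[\|X_t-X_s\|^{-s}]\,dt\,ds$. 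Since $X_t-X_s$ is a centered Gaussian vector in $\R^d$ with i.i.d.\ coordinates of variance $\ge\|t-s\|^{2a}$, one has $\EE[\|X_t-X_s\|^{-s}]\le C_{d,s}\|t-s\|^{-sa}$ for $s<d$, and the double integral converges once also $sa<N$; hence $I_s(\mu)<\infty$ a.s.\ and $\dimH(\rg_X(B(t_0,\rho)))\ge s$ for every $s<\min\{d,N/a\}$.

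\emph{Lower bound, graph.} Now I push Lebesgue measure on $B(t_0,\rho)$ forward by $t\mapsto(t,X_t)$ to obtain $\nu$ on $\R^{N+d}$, so that $\EE[I_s(\nu)]$ is the double integral of $g(t,s):=\EE[(\|t-s\|^2+\|X_t-X_s\|^2)^{-s/2}]$. I bound $g$ by splitting the expectation according to whether $\|X_t-X_s\|$ is below or above $\|t-s\|$, using the small‑deviation estimate $\PP(\|X_t-X_s\|\le r)\lesssim (r\,\|t-s\|^{-a})^d$ and the (truncated) negative‑moment estimates for the $\chi^2_d$ law; this gives $g(t,s)\lesssim\|t-s\|^{-sa}$ when $s<d$ and $g(t,s)\lesssim\|t-s\|^{d(1-a)-s}$ when $s>d$. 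Integrating over the $N$‑dimensional domain $B(t_0,\rho)^2$, the first bound is integrable for $sa<N$ and the second for $s<N+d(1-a)$, so $\EE[I_s(\nu)]<\infty$ and hence $\dimH(\gr_X(B(t_0,\rho)))\ge s$ for every $s<N/a$ when $N\le da$, and for every $s<N+d(1-a)$ when $N>da$. Finally, letting $\epsilon\downarrow0$ and, as in the upper bound, choosing sequences $\rho_n\downarrow0$, $s_n$ approaching the relevant bound, and intersecting the countably many almost sure events before invoking monotonicity of $\dimH$ in $\rho$, I obtain the stated lower bounds for $\lim_{\rho\to0}$.

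\emph{Main obstacle.} The delicate step is the graph lower bound: controlling $g(t,s)=\EE[(\|t-s\|^2+\|X_t-X_s\|^2)^{-s/2}]$ uniformly in $s,t$ requires the case split in $s$ versus $d$ together with a truncation argument for the negative moments of $\|X_t-X_s\|$ (equivalently of a $\chi^2_d$ variable), arranged so that the exponent of $\|t-s\|$ produced in each regime is integrable over the $N$‑dimensional parameter domain, and so that the resulting thresholds assemble exactly into the two cases $N\le da$ and $N>da$. By comparison, the range estimate and the covering argument for the upper bounds are routine once Lemma~\ref{lemcovinc} is available.
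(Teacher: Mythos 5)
Your proposal is correct and follows essentially the same route as the paper: an almost sure local H\"older estimate (which the paper imports from the result $\widetilde{\alphar}_X(t_0)=\alphalocbb_X(t_0)$ of the $2$-microlocal analysis rather than re-deriving via Kolmogorov) combined with a localized Yoder-type covering argument for the upper bounds, and Frostman energy estimates for the pushforward of Lebesgue measure under $X$ and under $t\mapsto(t,X_t)$ for the lower bounds. Your small-deviation/truncation bound for $\EE[(\|t-s\|^2+\|X_t-X_s\|^2)^{-s/2}]$ reproduces exactly the paper's estimate $K\,\|t-s\|^{d-\beta}\sigma(s,t)^{-d}$ obtained there by hyperspherical integration, so the two arguments coincide in substance.
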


The proof of Theorem \ref{thmain} relies on Propositions \ref{propmajdimH} and \ref{propmindimH}.

\

\begin{theorem}[Uniform almost sure result]\label{thmainunif}
Let $X=\{X_t;\;t\in\R^N_+\}$ be a multi-parameter Gaussian random field in $\R^d$. Let $\alphalocbb_{X^{(i)}}(t)$ be the deterministic local H\"older exponent and $\alphainfbb_{X^{(i)}}(t)$ the deterministic local sub-exponent of $X^{(i)}$ at any $t\in\R_+^N$.

Set $\mathcal{A} = \{ t\in\R_+^N: \liminf_{u\rightarrow t}\alphalocbb_{X^{(i)}}(u)>0\}$. 

Then, with probability one, for all $t_0\in\mathcal{A}$, 
\begin{itemize}
\item if $N\leq d\ \liminf_{u\rightarrow t_0}\alphainfbb_{X^{(i)}}(u)$ then
\begin{align*}
\frac{N}{\displaystyle\liminf_{u\rightarrow t_0}\alphainfbb_{X^{(i)}}(u)} 
\leq \lim_{\rho\rightarrow 0}&\dimH(\gr_X(B(t_0,\rho))) \\
&\leq \min\left\{\frac{N}{\displaystyle\liminf_{u\rightarrow t_0}\alphalocbb_{X^{(i)}}(u)} ; N + d(1-\liminf_{u\rightarrow t_0}\alphalocbb_{X^{(i)}}(u))\right\}
\end{align*}
and
\begin{align*}
\frac{N}{\displaystyle\liminf_{u\rightarrow t_0}\alphainfbb_{X^{(i)}}(u)}
\leq \lim_{\rho\rightarrow 0}&\dimH(\rg_X(B(t_0,\rho))) 
\leq \min\left\{\frac{N}{\displaystyle\liminf_{u\rightarrow t_0}\alphalocbb_{X^{(i)}}(u)} ; d\right\}.
\end{align*}

\item if $N> d\ \liminf_{u\rightarrow t_0}\alphainfbb_{X^{(i)}}(u)$ then
\begin{align*}
N + d(1-\liminf_{u\rightarrow t_0}\alphainfbb_{X^{(i)}}(u))
\leq \lim_{\rho\rightarrow 0}&\dimH(\gr_X(B(t_0,\rho))) \\
&\leq \min\left\{\frac{N}{\displaystyle\liminf_{u\rightarrow t_0}\alphalocbb_{X^{(i)}}(u)} ; N + d(1-\liminf_{u\rightarrow t_0}\alphalocbb_{X^{(i)}}(u))\right\}
\end{align*}
and
\begin{align*}
\lim_{\rho\rightarrow 0}\dimH(\rg_X(B(t_0,\rho))) = d.
\end{align*}

\end{itemize}
\end{theorem}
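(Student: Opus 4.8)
The uniform statement is obtained from the pointwise Theorem~\ref{thmain} by a classical separability/monotonicity argument, plus the observation that all the quantities involved are monotone in the exponents and that the liminf of the exponents controls the local exponents on small balls. The plan is as follows.

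First I would reduce the uniform statement over the uncountable set $\mathcal{A}$ to a countable exceptional-null-set argument. Fix $t_0\in\mathcal{A}$ and set $\alphainfbb_\infty(t_0)=\liminf_{u\to t_0}\alphainfbb_{X^{(i)}}(u)$ and $\alphalocbb_\infty(t_0)=\liminf_{u\to t_0}\alphalocbb_{X^{(i)}}(u)$. For any $\epsilon>0$, by definition of the liminf there is $\rho_\epsilon>0$ such that $\alphainfbb_{X^{(i)}}(u)\le \alphainfbb_\infty(t_0)+\epsilon$ and $\alphalocbb_{X^{(i)}}(u)\ge \alphalocbb_\infty(t_0)-\epsilon$ for all $u\in B(t_0,\rho_\epsilon)$. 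The key point, which I would make precise via Lemma~\ref{lemcovinc}, is that the upper bound for $\dimH(\gr_X(B(t_0,\rho)))$ in Theorem~\ref{thmain} only uses that $\EE[|X_t-X_s|^2]\le \|t-s\|^{2\alphalocbb_{X^{(i)}}(t_0)-\epsilon}$ on a neighborhood of $t_0$, hence it applies verbatim with $\alphalocbb_{X^{(i)}}(t_0)$ replaced by the smaller quantity $\alphalocbb_\infty(t_0)$; similarly the lower bound only uses $\EE[|X_t-X_s|^2]\ge \|t-s\|^{2\alphainfbb_{X^{(i)}}(t_0)+\epsilon}$, hence applies with $\alphainfbb_{X^{(i)}}(t_0)$ replaced by the larger quantity $\alphainfbb_\infty(t_0)$. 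This yields the asserted inequalities \emph{for each fixed} $t_0$, on a $t_0$-dependent almost sure event.

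Next I would upgrade this to a single almost sure event valid for all $t_0\in\mathcal{A}$ simultaneously. The upper bounds hold automatically for all $t_0$ on a single null set: the standard covering estimates in Section~\ref{sec:up} (Proposition~\ref{propmajdimH}) produce, on one almost sure event, a uniform modulus of continuity for $X$ on compacts, from which $\dimH(\gr_X(B(t_0,\rho)))\le \min\{N/\alphalocbb_\infty(t_0),\,N+d(1-\alphalocbb_\infty(t_0))\}$ follows for every $t_0$ at once (this is precisely the content of the "uniform" part of the upper-bound proof). For the lower bounds one works with a countable dense family: let $D$ be a countable dense subset of $\mathcal{A}$ and, for rational $\epsilon>0$ and rational $\rho>0$, apply the pointwise lower bound of Theorem~\ref{thmain} (via Proposition~\ref{propmindimH}) on the ball $B(t,\rho)$ for each $t\in D$, intersecting the countably many almost sure events. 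Given an arbitrary $t_0\in\mathcal{A}$, choose $t\in D$ and $\rho$ with $B(t_0,\rho')\subset B(t,\rho)\subset B(t_0,\rho_\epsilon)$ for small $\rho'$; monotonicity of Hausdorff dimension under inclusion gives $\dimH(\gr_X(B(t_0,\rho')))\ge \dimH(\gr_X(B(t,\rho)))$ up to the caveat that the lower bound in Theorem~\ref{thmain} is for $\lim_{\rho\to 0}\dimH(\gr_X(B(t_0,\rho)))$, so one instead runs the lower-bound construction (which is itself a "from below on small balls" statement) directly at $t_0$ using the inequality $\EE[|X_t-X_s|^2]\ge\|t-s\|^{2(\alphainfbb_\infty(t_0)+\epsilon)}$ valid on $B(t_0,\rho_\epsilon)$, and notes that the construction's randomness (a local-time / capacity argument, uniform on the underlying compact) is measurable with respect to the single event already fixed. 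Letting $\epsilon\downarrow 0$ through rationals closes the gap in both the range and graph estimates; in the case $N>d\,\alphainfbb_\infty(t_0)$ the matching upper and lower bounds $d$ for the range force the stated equality $\lim_{\rho\to0}\dimH(\rg_X(B(t_0,\rho)))=d$.

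The main obstacle is the interchange of "for all $t_0$" with "almost surely" in the \emph{lower} bound: unlike the upper bound, the lower bound is not a deterministic consequence of a single sample-path modulus but relies on a probabilistic second-moment/capacity argument whose exceptional null set a priori depends on $t_0$. Overcoming this requires that the lower-bound machinery of Section~\ref{sec:low} be run once on a fixed compact (or on each member of an increasing sequence of compacts exhausting $\R^N_+$), producing a \emph{single} event on which the capacity estimate — hence the dimension lower bound — holds simultaneously for every ball centered in that compact; combined with the fact that on $\mathcal{A}$ the relevant exponent inequality $\EE[|X_t-X_s|^2]\ge\|t-s\|^{2(\alphainfbb_\infty(t_0)+\epsilon)}$ is a deterministic local fact about the covariance, not about the sample path, this is exactly what makes the uniform statement go through. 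I would therefore structure the proof so that Propositions~\ref{propmajdimH} and \ref{propmindimH} are stated and proved already in their uniform-on-compacts form, and the present theorem becomes a short deduction: fix a countable exhaustion, intersect the (countably many) almost sure events, and on the resulting event read off both bounds for every $t_0\in\mathcal{A}$ by feeding in $\alphalocbb_\infty(t_0)-\epsilon$ and $\alphainfbb_\infty(t_0)+\epsilon$ and letting $\epsilon\to0$.
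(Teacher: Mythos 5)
Your proposal is correct and follows essentially the same route as the paper: the upper bound is exactly the uniform part of Proposition~\ref{propmajdimH} (a single a.s.\ event carrying the uniform H\"older modulus, then the deterministic Lemma~\ref{lemdimHmaj}), and the lower bound is obtained, as in Corollaries~\ref{cormindimHunif1} and~\ref{cormindimHunif2}, by running the Frostman/capacity argument over a countable family of rational sets, intersecting the null sets, and using monotonicity of $\dimH$ together with rational $\epsilon\downarrow 0$. The one imprecise aside --- ``running the lower-bound construction directly at $t_0$'' and asserting that its exceptional set is measurable with respect to the already-fixed event --- is not justified as stated, but it is also not needed, since your countable-family-plus-monotonicity mechanism (which is what the paper actually does, with rational intervals shrinking to $t_0$) already closes the argument.
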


The proof of Theorem \ref{thmainunif} relies on Proposition \ref{propmajdimH} and Corollary \ref{cormindimHunif2}.

\begin{theorem}[Global almost sure result]\label{thmaincompact}
Let $X=\{X_t;\;t\in\R^N_+\}$ be a multiparameter Gaussian field in $\R^d$. Let $\alphalocbb_{X^{(i)}}(t)$ be the deterministic local H\"older exponent and $\alphainfbb_{X^{(i)}}(t)$ the deterministic local sub-exponent of $X^{(i)}$ at any $t\in\R_+^N$.

For any open interval $I \subset\R^N_+$, assume that the quantities $\alphainfbb = \inf_{t\in I}\alphainfbb_{X^{(i)}}(t)$ and $\alphalocbb = \inf_{t\in I}\alphalocbb_{X^{(i)}}(t)$ satisfy
$0 < \alphalocbb \leq \alphainfbb$. 
Then, with probability one,
\begin{align*}
\left. \begin{array}{l r}
\textrm{if } N\leq d\ \alphainfbb, 
& N/\alphainfbb \\
\textrm{if } N> d\ \alphainfbb, 
& N + d(1-\alphainfbb)
\end{array} \right\} 
\leq \dimH(\gr_X(I))
\leq \min\left\{N/\alphalocbb ; N + d(1-\alphalocbb) \right\}
\end{align*}
and
\begin{align*}
\left. \begin{array}{l r}
\textrm{if } N\leq d\ \alphainfbb, 
&N/\alphainfbb \\
\textrm{if } N> d\ \alphainfbb, 
& d
\end{array} \right\}
\leq \dimH(\rg_X(I)) 
\leq \min\left\{N/\alphalocbb ; d\right\}.
\end{align*}

\end{theorem}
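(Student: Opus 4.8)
The plan is to establish the upper and lower bounds separately, reducing each to results already available — the covering estimate behind Proposition~\ref{propmajdimH} for the upper bound, and the pointwise Theorem~\ref{thmain} for the lower bound — by using that $\alphalocbb$ and $\alphainfbb$ are infima over $I$ and that Hausdorff dimension is countably stable.

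For the upper bounds I would first reduce to a compact set: write $I=\bigcup_n K_n$ with $K_n\subset I$ compact, so that $\gr_X(I)=\bigcup_n\gr_X(K_n)$ and $\rg_X(I)=\bigcup_n\rg_X(K_n)$, and it suffices to bound $\dimH(\gr_X(K))$ and $\dimH(\rg_X(K))$ for an arbitrary compact $K\subset I$. Fix $\epsilon>0$ with $\alphalocbb-\epsilon>0$. Since $\alphalocbb_{X^{(i)}}(t_0)\geq\alphalocbb$ for every $t_0\in K$, Lemma~\ref{lemcovinc} supplies a radius $\rho_0(t_0)>0$ (which may be taken $\leq 1/2$) with $\EE[|X^{(i)}_u-X^{(i)}_v|^2]\leq\|u-v\|^{2\alphalocbb_{X^{(i)}}(t_0)-\epsilon}\leq\|u-v\|^{2\alphalocbb-\epsilon}$ for $u,v\in B(t_0,\rho_0(t_0))$. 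I would then pass from this ball-by-ball control to a single uniform estimate on $K$: extract a finite subcover $\{B(t_j,\rho_0(t_j))\}_{1\leq j\leq m}$ and apply the Lebesgue number lemma to get $\delta>0$ such that any $u,v\in K$ with $\|u-v\|<\delta$ lie in a common $B(t_j,\rho_0(t_j))$, whence $\EE[|X^{(i)}_u-X^{(i)}_v|^2]\leq\|u-v\|^{2\alphalocbb-\epsilon}$ for all such $u,v$. This is exactly the hypothesis under which the covering/entropy argument behind Proposition~\ref{propmajdimH} operates (the classical entropy/concentration estimates for Gaussian fields turn it into an almost sure uniform modulus $\sup_{u,v\in K,\,\|u-v\|\leq h}|X_u-X_v|\lesssim h^{\alphalocbb-\epsilon}$, after which one counts the $h$-boxes meeting $\gr_X(K)$, resp.\ $\rg_X(K)$), and it will give, almost surely, $\dimH(\gr_X(K))\leq\min\{N/(\alphalocbb-\epsilon),\,N+d(1-(\alphalocbb-\epsilon))\}$ and $\dimH(\rg_X(K))\leq\min\{N/(\alphalocbb-\epsilon),\,d\}$. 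Taking a countable intersection over rational $\epsilon\downarrow 0$ and over $n$, and using the continuity of $\alpha\mapsto\min\{N/\alpha,N+d(1-\alpha)\}$ and $\alpha\mapsto\min\{N/\alpha,d\}$, will produce the announced upper bounds for $\dimH(\gr_X(I))$ and $\dimH(\rg_X(I))$.

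For the lower bounds it is convenient to abbreviate the two bound functions: let $\phi(\alpha)=N/\alpha$ if $N\leq d\alpha$ and $\phi(\alpha)=N+d(1-\alpha)$ if $N>d\alpha$, and $\psi(\alpha)=N/\alpha$ if $N\leq d\alpha$, $\psi(\alpha)=d$ if $N>d\alpha$; both are continuous and non-increasing on $(0,+\infty)$, and $\phi(\alphainfbb)$, $\psi(\alphainfbb)$ are precisely the stated lower bounds (these are $0$ and there is nothing to prove if $\alphainfbb=+\infty$, so assume $\alphainfbb<+\infty$). Since $\alphainfbb=\inf_{t\in I}\alphainfbb_{X^{(i)}}(t)$, for each rational $\eta>0$ I would pick, deterministically, a point $t^\ast_\eta\in I$ with $\alphainfbb_{X^{(i)}}(t^\ast_\eta)<\alphainfbb+\eta$; as $I$ is open, $B(t^\ast_\eta,\rho_\eta)\subset I$ for some $\rho_\eta>0$, so $\gr_X(B(t^\ast_\eta,\rho))\subseteq\gr_X(I)$ for $\rho\leq\rho_\eta$ and, by monotonicity of $\dimH$, $\dimH(\gr_X(I))\geq\lim_{\rho\to 0}\dimH(\gr_X(B(t^\ast_\eta,\rho)))$ (and similarly for $\rg_X$). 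Since $\alphalocbb_{X^{(i)}}(t^\ast_\eta)\geq\alphalocbb>0$, Theorem~\ref{thmain} applies at the deterministic point $t^\ast_\eta$ and gives, almost surely, $\lim_{\rho\to 0}\dimH(\gr_X(B(t^\ast_\eta,\rho)))\geq\phi(\alphainfbb_{X^{(i)}}(t^\ast_\eta))\geq\phi(\alphainfbb+\eta)$ and $\lim_{\rho\to 0}\dimH(\rg_X(B(t^\ast_\eta,\rho)))\geq\psi(\alphainfbb_{X^{(i)}}(t^\ast_\eta))\geq\psi(\alphainfbb+\eta)$, using that $\phi,\psi$ are non-increasing. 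Intersecting these almost sure events over rational $\eta>0$ and letting $\eta\downarrow 0$, the continuity of $\phi$ and $\psi$ will yield $\dimH(\gr_X(I))\geq\phi(\alphainfbb)$ and $\dimH(\rg_X(I))\geq\psi(\alphainfbb)$ almost surely, which combined with the upper bounds completes the proof.

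The main obstacle, in both halves, is converting information attached to individual points $t\in I$ into a statement holding uniformly over the uncountable index set $I$, and the two bounds call for opposite devices. For the upper bound, $\alphalocbb_{X^{(i)}}$ is not assumed continuous, so one cannot simply invoke Proposition~\ref{propmajdimH} on a countable dense subset: the radii handed out by Lemma~\ref{lemcovinc} would be uncontrolled and the associated balls need not cover $I$; the compactness/Lebesgue-number step, which collapses the family of ball-wise variance bounds into one uniform bound on each compact $K$, is the crux that legitimizes the entropy count. For the lower bound, the infimum defining $\alphainfbb$ need not be attained, so one chases it with a deterministic sequence $t^\ast_\eta$ and leans on the continuity of the (elementary but piecewise-defined) functions $\phi,\psi$ at $\alphainfbb$ to recover the exact exponent; apart from that, it is a routine assembly of countably many instances of Theorem~\ref{thmain}.
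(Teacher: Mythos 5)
Your proof is correct, and its overall architecture coincides with the paper's: the lower bound is obtained exactly as in Corollary \ref{cormindimHunif1} (openness of $I$ gives $\dimH(\gr_X(I))\geq\lim_{\rho\rightarrow 0}\dimH(\gr_X(B(t^\ast,\rho)))$, one chases the infimum $\alphainfbb$ with deterministic points $t^\ast_\eta$, applies the pointwise Theorem \ref{thmain} / Proposition \ref{propmindimH} there, and intersects over rational $\eta$), and the upper bound reduces, as in Corollary \ref{cormajdimHunif1}, to a uniform pathwise H\"older modulus of order $\alphalocbb-\epsilon$ on compact subsets followed by Yoder's deterministic bound. The one place you genuinely diverge is in how that uniform modulus is produced: the paper first converts the deterministic exponent into an almost sure pathwise statement via the uniform identification $\widetilde{\alphar}_{X^{(i)}}(t_0)=\alphalocbb_{X^{(i)}}(t_0)$ of \cite{2ml}, then globalizes the resulting local H\"older bounds using continuity of the sample path on the bounded interval (and passes to $\overline{I}$ by continuity when $I$ is not closed); you instead globalize at the level of the incremental variance (finite subcover plus Lebesgue number on each compact of an exhaustion $I=\bigcup_n K_n$) and only then invoke a Kolmogorov--Dudley type continuity estimate to obtain the almost sure modulus. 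Both orderings are sound: your variant is more self-contained with respect to the machinery of \cite{2ml} but imports a standard Gaussian continuity theorem that the paper does not need to restate, and your compact exhaustion combined with countable stability of $\dimH$ neatly sidesteps the paper's extension-to-the-closure step. One small caveat: the "entropy/concentration" step costs a logarithmic factor, so it yields a modulus of order $\alphalocbb-\epsilon'$ only for $\epsilon'>\epsilon$; since you take $\epsilon$ along a rational sequence tending to $0$ anyway, this is harmless, but it is worth saying explicitly.
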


The proof of Theorem \ref{thmaincompact} relies on Corollary \ref{cormajdimHunif1} and Corollary \ref{cormindimHunif1}.

\subsection{Upper bound for the Hausdorff dimension}\label{sec:up}

\begin{lemma}\label{lemdimHmaj}
Let $X=\{X_t;\;t\in\R^N_+\}$ be a multiparameter random process with values in $\R^d$. Let $\widetilde{\alphar}_X(t_0)$ be the local H\"older exponent of $X$ at $t_0\in\R_+^N$.

For any $\omega$ such that $\widetilde{\alphar}_X(t_0)>0$, 
\begin{align*}
\lim_{\rho\rightarrow 0}\dimH(\rg_X(B(t_0,\rho))) \leq \lim_{\rho\rightarrow 0}\dimH&(\gr_X(B(t_0,\rho))) \\
&\leq \min\left\{ \frac{N}{\widetilde{\alphar}_X(t_0)} ; N + d(1-\widetilde{\alphar}_X(t_0))\right\}.
\end{align*}
\end{lemma}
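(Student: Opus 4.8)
The plan is to derive the upper bound from a uniform local Hölder estimate, which is precisely what a positive local Hölder exponent $\widetilde{\alphar}_X(t_0)$ furnishes. Fix an $\omega$ with $\widetilde{\alphar}_X(t_0) > 0$ and fix any $\alpha$ with $0 < \alpha < \widetilde{\alphar}_X(t_0)$. By the definition \eqref{eq:localHolder-exp} of the local Hölder exponent, there exists $\rho_0 = \rho_0(\omega,\alpha) > 0$ and a finite constant $C = C(\omega,\alpha)$ such that
\begin{equation*}
\forall s,t \in B(t_0,\rho_0), \quad \|X_t - X_s\| \leq C\,\|t-s\|^{\alpha}.
\end{equation*}
First I would work inside a fixed ball $B(t_0,\rho)$ with $\rho \leq \rho_0$, and set up the standard covering argument: partition $B(t_0,\rho)$ (or the cube containing it) into roughly $m^N$ subcubes of side $\rho/m$, hence of diameter $\asymp \rho/m$. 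The Hölder bound forces the image of each subcube under $X$ to have diameter $\lesssim (\rho/m)^{\alpha}$.

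Next I would carry out the two covering computations separately, which yields the two competing terms in the minimum. For the range $\rg_X(B(t_0,\rho))$: each of the $m^N$ image pieces has diameter $\lesssim (\rho/m)^{\alpha}$, so for the exponent $s$ we get $\mathcal{H}^s_\delta(\rg_X(B(t_0,\rho))) \lesssim m^N (\rho/m)^{\alpha s} = \rho^{\alpha s}\, m^{N - \alpha s}$; letting $m \to \infty$, this tends to $0$ whenever $s > N/\alpha$, giving $\dimH(\rg_X(B(t_0,\rho))) \leq N/\alpha$, and the same bound for the graph after projecting. For the graph $\gr_X(B(t_0,\rho))$ one must instead cover each piece $(Q_j, X(Q_j))$, where $Q_j$ is a subcube: since $X(Q_j)$ has diameter $\lesssim (\rho/m)^{\alpha}$ and $Q_j$ has diameter $\asymp \rho/m$, one tiles the $\R^d$-fibre over $Q_j$ by about $\big((\rho/m)^{\alpha}/(\rho/m)\big)^d = (\rho/m)^{-d(1-\alpha)}$ boxes of diameter $\asymp \rho/m$ (using $\alpha < 1$, so this exponent is nonnegative). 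This produces $m^N (\rho/m)^{-d(1-\alpha)}$ sets of diameter $\asymp \rho/m$, whence $\mathcal{H}^s_\delta(\gr_X(B(t_0,\rho))) \lesssim m^{N + d(1-\alpha)}(\rho/m)^s$, which tends to $0$ as $m \to \infty$ whenever $s > N + d(1-\alpha)$. Combining, $\dimH(\gr_X(B(t_0,\rho))) \leq \min\{N/\alpha, N + d(1-\alpha)\}$, and the sandwich $\dimH(\rg_X(B(t_0,\rho))) \leq \dimH(\gr_X(B(t_0,\rho)))$ follows since the projection $(t,x) \mapsto x$ is Lipschitz and Lipschitz maps do not increase Hausdorff dimension.

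Finally, since these bounds hold simultaneously for every $\rho \leq \rho_0$, they pass to the limit $\rho \to 0$; and since $\alpha < \widetilde{\alphar}_X(t_0)$ was arbitrary, letting $\alpha \uparrow \widetilde{\alphar}_X(t_0)$ gives the claimed inequality with $\widetilde{\alphar}_X(t_0)$ in place of $\alpha$ (noting that the function $\alpha \mapsto \min\{N/\alpha, N+d(1-\alpha)\}$ is continuous and non-increasing on $(0,1]$, and that for $\widetilde{\alphar}_X(t_0) > 1$ the bound $N/\widetilde{\alphar}_X(t_0) < N$ is already captured by taking $\alpha$ close to $1$ from below, or trivially since a path with exponent $>1$ on a neighbourhood is constant there). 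The only delicate point is the bookkeeping in the graph covering — making sure the number of $\R^d$-boxes per subcube is correctly $\lceil (\rho/m)^{\alpha - 1}\rceil^d$ and that the resulting sets genuinely have diameter controlled by a constant times $\rho/m$ in $\R^{N+d}$ — but this is the classical Adler-type estimate and presents no real obstacle; everything else is routine.
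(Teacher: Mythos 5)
Your argument is correct and rests on the same essential idea as the paper's proof: a positive local H\"older exponent gives a uniform H\"older estimate on a small ball, and the classical covering bound for H\"older maps then yields $\min\{N/\alpha,\,N+d(1-\alpha)\}$, after which one lets $\rho\to 0$ and $\alpha\uparrow\widetilde{\alphar}_X(t_0)$. The only difference is in execution: the paper rescales the restriction of $X$ to a small cube onto $[0,1]^N$ via an affine map (which preserves Hausdorff dimension) and then simply cites Yoder's theorem (equivalently Falconer, Corollary~11.2) for the covering bound, whereas you inline that covering computation, partitioning into $m^N$ subcubes and counting boxes of size $\rho/m$ in the fibre direction. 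Your box-counting bookkeeping is right, and your observation that the range bound follows from the graph bound because the projection $(t,x)\mapsto x$ is Lipschitz is exactly the paper's first inequality. Reproving the covering estimate buys you self-containedness at the cost of a page of routine computation; the paper's reduction is shorter but relies on the affine-invariance and monotonicity observations to transport the cited result back to $B(t_0,\rho)$. One small remark: your parenthetical about $\widetilde{\alphar}_X(t_0)>1$ does not quite close that degenerate case (a locally constant path has graph of dimension $N$, which exceeds $N+d(1-\alpha)$ for $\alpha>1$), but the paper's proof has the same implicit restriction to exponents in $(0,1]$, and in all applications here the exponents lie in that range, so this is not a defect relative to the paper.
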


\begin{proof}
The first inequality follows the fact that the range $\rg_X(B(t_0,\rho))$ is a projection of the graph $\gr_X(B(t_0,\rho))$.
For the second inequality, we need to localize the argument of Yoder (\cite{Yoder}), who proved the upper bound for the Hausdorff dimensions of the range and the graph of a H\"olderian function from $\R^N$ (or $[0,1]^N$) to $\R^d$ (see also \cite{falconer}, Corollary~11.2 p. 161).

Assume that $\omega$ is fixed such that $\widetilde{\alphar}_X(t_0,\omega)>0$.
By definition of $\widetilde{\alphar}_X(t_0)$, for all $\epsilon>0$ there exists $\rho_0>0$ such that for all $\rho\in (0,\rho_0]$,
\begin{align*}
\forall s,t\in B(t_0,\rho),\quad
\|X_t(\omega)-X_s(\omega)\| \leq \|t-s\|^{\widetilde{\boldsymbol{\alpha}}_X(t_0,\omega)-\epsilon}.
\end{align*}
There exists a real $0<\delta_0<1$ such that for all $u\in [0,1]^N$, $t_0 + \delta_0.u\in B(t_0,\rho_0)$ and consequently,
\begin{align*}
\forall u,v\in [0,1]^N,\quad
\|X_{t_0+\delta_0.u}(\omega)-X_{t_0+\delta_0.v}(\omega)\| \leq (\delta_0\ \|u-v\|)^{\widetilde{\boldsymbol{\alpha}}_X(t_0,\omega)-\epsilon}.
\end{align*}

Then, the function $Y_{\bullet}(\omega):u\mapsto Y_u(\omega)=X_{t_0+\rho_0.u}(\omega)$ is H\"older-continuous of order $\widetilde{\alphar}_X(t_0,\omega)-\epsilon$ on $[0,1]^N$ and therefore, according to \cite{Yoder},
\begin{align*}
\dimH(\rg_{Y_{\bullet}(\omega)}([0,1]^N)) \leq \dimH&(\gr_{Y_{\bullet}(\omega)}([0,1]^N)) \\
&\leq \min\left\{ \frac{N}{\widetilde{\alphar}_X(t_0,\omega)-\epsilon} ; N + d(1-\widetilde{\alphar}_X(t_0,\omega)+\epsilon)\right\}.
\end{align*}
We can observe that the graph $\gr_{X_{\bullet}(\omega)}(t_0+\delta_0.[0,1]^N))$ is an affine transformation of the graph $\gr_{Y_{\bullet}(\omega)}([0,1]^N))$, therefore their Hausdorff dimensions are equal.
Moreover, there exists $\rho>0$ such that $B(t_0,\rho) \subset t_0+\delta_0.[0,1]^N$. By monotony of the function $\rho\mapsto\dimH(\gr_{X_{\bullet}(\omega)}(B(t_0,\rho)))$, we can write
\begin{align*}
\lim_{\rho\rightarrow 0}\dimH&(\gr_{X_{\bullet}(\omega)}(B(t_0,\rho))) 
\leq \min\left\{ \frac{N}{\widetilde{\alphar}_X(t_0,\omega)-\epsilon} ; N + d(1-\widetilde{\alphar}_X(t_0,\omega)+\epsilon)\right\}.
\end{align*}
Since this inequality stands for all $\epsilon>0$, we get
\begin{align*}
\lim_{\rho\rightarrow 0}\dimH&(\gr_{X_{\bullet}(\omega)}(B(t_0,\rho))) 
\leq \min\left\{ \frac{N}{\widetilde{\alphar}_X(t_0,\omega)} ; N + d(1-\widetilde{\alphar}_X(t_0,\omega))\right\}.
\end{align*}
\end{proof}

Lemma \ref{lemdimHmaj} gives a random upper bound for the Hausdorff dimensions of the (localized) range and graph of the sample path, in function of its local H\"older exponents. 
When $X$ is a multiparameter Gaussian field in $\R^d$, we prove that this upper bound can be expressed almost surely with the deterministic local H\"older exponent of the Gaussian component processes $X^{(i)}$.

\begin{proposition}\label{propmajdimH}
Let $X=\{X_t;\;t\in\R^N_+\}$ be a multiparameter Gaussian field in $\R^d$. Let $\alphalocbb_{X^{(i)}}(t_0)$ be the deterministic local H\"older exponent of $X^{(i)}$ at $t_0\in\R_+^N$ and assume that $\alphalocbb_{X^{(i)}}(t_0)>0$.

Then, almost surely
\begin{align*}
\lim_{\rho\rightarrow 0}\dimH(\rg_X(B(t_0,\rho))) \leq \lim_{\rho\rightarrow 0}\dimH&(\gr_X(B(t_0,\rho))) \\
&\leq \min\left\{N/\alphalocbb_{X^{(i)}}(t_0) ; N + d(1-\alphalocbb_{X^{(i)}}(t_0))\right\}.
\end{align*}

Moreover, an uniform result can be stated on the set 
$$\mathcal{A}=\{t_0\in\R^N_+: \liminf_{u\rightarrow t_0}\alphalocbb_{X^{(i)}}(u)>0\}.$$ With probability one, for all $t_0\in \mathcal{A}$, 
\begin{align*}
\lim_{\rho\rightarrow 0}\dimH(\rg_X(B(t_0,\rho))) \leq &\lim_{\rho\rightarrow 0}\dimH(\gr_X(B(t_0,\rho))) \\
&\quad\leq \min\left\{N/\liminf_{u\rightarrow t_0}\alphalocbb_{X^{(i)}}(u) ; N + d(1-\liminf_{u\rightarrow t_0}\alphalocbb_{X^{(i)}}(u))\right\}.
\end{align*}

\end{proposition}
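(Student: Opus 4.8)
The plan is to combine Lemma \ref{lemdimHmaj} with the identity $\widetilde{\alphar}_{X}(t_0) = \alphalocbb_{X}(t_0)$ a.s. for Gaussian processes (cited from \cite{2ml} in the discussion preceding Lemma \ref{lemcovinc}), applied coordinate-wise, and then to upgrade the pointwise statement to a uniform one via a separability/countable-exceptional-set argument. First I would observe that the local Hölder exponent of the $\R^d$-valued field satisfies $\widetilde{\alphar}_X(t_0) = \min_{1\le i\le d}\widetilde{\alphar}_{X^{(i)}}(t_0)$ a.s., because $\|X_t - X_s\|$ is comparable to $\max_i |X^{(i)}_t - X^{(i)}_s|$; since the coordinates are i.i.d. Gaussian, each $\widetilde{\alphar}_{X^{(i)}}(t_0)$ equals the common deterministic value $\alphalocbb_{X^{(i)}}(t_0)$ a.s., so $\widetilde{\alphar}_X(t_0) = \alphalocbb_{X^{(i)}}(t_0)$ a.s. Feeding this into the conclusion of Lemma \ref{lemdimHmaj} gives the first (pointwise) assertion immediately, on the almost sure event where $\widetilde{\alphar}_X(t_0) = \alphalocbb_{X^{(i)}}(t_0) > 0$.

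For the uniform statement on $\mathcal{A}$, the difficulty is that the pointwise almost sure event depends on $t_0$, and $\mathcal{A}$ is uncountable. The key step is to pass to a countable dense set. Fix a countable dense subset $D \subset \R^N_+$; on a single almost sure event $\Omega_0$ (the countable intersection over $t_0 \in D$ of the pointwise events) we have $\widetilde{\alphar}_X(t_0) = \alphalocbb_{X^{(i)}}(t_0)$ for every $t_0 \in D$. Now fix $\omega \in \Omega_0$ and an arbitrary $t_0 \in \mathcal{A}$; let $\beta := \liminf_{u\to t_0}\alphalocbb_{X^{(i)}}(u) > 0$ and pick any $\beta' < \beta$. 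By definition of $\liminf$ there is $r>0$ with $\alphalocbb_{X^{(i)}}(u) > \beta'$ for all $u \in B(t_0,r)$. For $\rho < r$, the ball $B(t_0,\rho)$ is contained in $B(u,2\rho)$ for any $u \in D \cap B(t_0,\rho)$, and such $u$ exists by density; hence $\dimH(\gr_X(B(t_0,\rho))) \le \dimH(\gr_X(B(u,2\rho)))$, and letting $\rho \to 0$, monotonicity together with the pointwise bound at $u$ gives $\lim_{\rho\to 0}\dimH(\gr_X(B(t_0,\rho))) \le \min\{N/\alphalocbb_{X^{(i)}}(u)\,;\,N+d(1-\alphalocbb_{X^{(i)}}(u))\}$ — but I need this limit to be controlled by $\beta'$, which requires choosing $u \to t_0$ along $D$ with $\alphalocbb_{X^{(i)}}(u) \to$ something $\ge \beta'$, i.e. I should take $u = u_\rho \in D \cap B(t_0,\rho)$ so that $\alphalocbb_{X^{(i)}}(u_\rho) > \beta'$ and conclude $\lim_{\rho\to 0}\dimH(\gr_X(B(t_0,\rho))) \le \min\{N/\beta'\,;\,N+d(1-\beta')\}$. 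Letting $\beta' \uparrow \beta$ yields the uniform bound.

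The main obstacle I anticipate is making the density argument of the previous paragraph fully rigorous: one must be careful that the nested-ball inclusion and the monotonicity of $\rho \mapsto \dimH(\gr_X(B(\cdot,\rho)))$ are compatible with the fact that the center is also moving (here $u_\rho$ depends on $\rho$), and that the function $\min\{N/x\,;\,N+d(1-x)\}$ is continuous and decreasing in $x$ on $(0,1]$ so that the supremum over the admissible $u_\rho$'s is controlled by $\beta'$. A clean way to organize this is to first prove the weaker uniform statement with $\liminf_{u\to t_0}$ replaced by the value at a single nearby point of $D$, then take the infimum over such points and use continuity of the bounding function. The remaining inequality $\dimH(\rg_X(B(t_0,\rho))) \le \dimH(\gr_X(B(t_0,\rho)))$ is, as in Lemma \ref{lemdimHmaj}, immediate from the fact that $\rg_X$ is a Lipschitz (coordinate-projection) image of $\gr_X$, so it carries over to both the pointwise and uniform settings without extra work.
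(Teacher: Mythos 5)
Your pointwise argument is essentially the paper's: apply the almost sure identity $\widetilde{\alphar}_{X^{(i)}}(t_0)=\alphalocbb_{X^{(i)}}(t_0)$ from \cite{2ml} coordinate-wise to get $\widetilde{\alphar}_X(t_0)\geq\alphalocbb_{X^{(i)}}(t_0)$ a.s., then feed this into Lemma \ref{lemdimHmaj}. That part is correct.

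The uniform part has a genuine gap. Your plan is to bootstrap uniformity from the pointwise statement via a countable dense set $D$ and moving centers $u_\rho\in D\cap B(t_0,\rho)$. But the pointwise conclusion at a point $u\in D$ only controls $\lim_{\rho'\to 0}\dimH(\gr_X(B(u,\rho')))$, i.e.\ it gives a H\"older bound (and hence a dimension bound) on $B(u,\rho')$ only for $\rho'\leq\rho_0(u,\omega,\epsilon)$, a threshold depending on $u$ and $\omega$. To bound $\dimH(\gr_X(B(t_0,\rho)))$ by $\dimH(\gr_X(B(u_\rho,2\rho)))$ and then invoke the pointwise estimate, you need $2\rho\leq\rho_0(u_\rho,\omega,\epsilon)$; since $u_\rho$ changes as $\rho\to 0$, there is no uniform lower bound on $\rho_0(u_\rho,\omega,\epsilon)$ along the sequence, and the argument cannot close. (Fixing a single $u\in D$ near $t_0$ fails for the opposite reason: the ball $B(u,\|t_0-u\|+\rho)$ containing $B(t_0,\rho)$ has radius bounded away from $0$.) This is exactly the obstruction you flag as the ``main obstacle,'' but it is not a bookkeeping issue — it requires a genuinely uniform input. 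The paper resolves it by invoking Theorem 3.14 of \cite{2ml}, which asserts that on a single event of probability one, $\liminf_{u\to t_0}\alphalocbb_{X^{(i)}}(u)\leq\widetilde{\alphar}_{X^{(i)}}(t_0)$ holds \emph{simultaneously for all} $t_0$; this yields, for every $t_0\in\mathcal{A}$ and $\epsilon>0$, a H\"older estimate of order $\liminf_{u\to t_0}\alphalocbb_{X^{(i)}}(u)-\epsilon$ on some ball around $t_0$ on that one event, after which the argument of Lemma \ref{lemdimHmaj} applies. Without that (or an equivalent uniform modulus-of-continuity statement), the uniform claim does not follow from the pointwise one.
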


\begin{proof}
In \cite{2ml}, the local H\"older exponent of any Gaussian process $Y$ at $t_0\in\R^N_+$ such that $\alphalocbb_Y(t_0)>0$ is proved to satisfy $\widetilde{\alphar}_Y(t_0) = \alphalocbb_Y(t_0)$ almost surely.
Therefore, by definition of $\widetilde{\alphar}_{X^{(i)}}(t_0)$, for all $\epsilon>0$ there exists $\rho_0>0$ such that for all $\rho\in (0,\rho_0]$, we have almost surely
\begin{align*}
\forall s,t\in B(t_0,\rho),\quad
|X^{(i)}_t-X^{(i)}_s| \leq \|t-s\|^{\alphalocbb_{X^{(i)}}(t_0)-\epsilon},
\end{align*}
and consequently, almost surely
\begin{align}\label{eqmajGaussField}
\forall s,t\in B(t_0,\rho),\quad
\|X_t-X_s\| \leq K\ \|t-s\|^{\alphalocbb_{X^{(i)}}(t_0)-\epsilon},
\end{align}
for some constant $K>0$.

From (\ref{eqmajGaussField}), we deduce that $\widetilde{\alphar}_X(t_0)\geq \alphalocbb_{X^{(i)}}(t_0)$ almost surely.
Then Lemma \ref{lemdimHmaj} implies almost surely
\begin{align*}
\lim_{\rho\rightarrow 0}\dimH(\rg_X(B(t_0,\rho))) \leq \lim_{\rho\rightarrow 0}\dimH&(\gr_X(B(t_0,\rho))) \\
&\leq \min\left\{N/\alphalocbb_{X^{(i)}}(t_0) ; N + d(1-\alphalocbb_{X^{(i)}}(t_0))\right\}.
\end{align*}

For the uniform result on $t_0\in\R^N_+$, we use the Theorem 3.14 of \cite{2ml} which states that if $Y$ is a Gaussian process such that the function $t_0\mapsto\liminf_{u\rightarrow t_0}\alphalocbb_Y(u)$ is positive, then with probability one,
\begin{align*}
\forall t_0\in\R^N_+,\quad
\liminf_{u\rightarrow t_0}\alphalocbb_Y(u) 
\leq \widetilde{\alphar}_Y(t_0)
\leq \limsup_{u\rightarrow t_0}\alphalocbb_Y(u).
\end{align*} 
This inequality yields to the existence of $\Omega_i\in\mathcal{F}$ for all $1\leq i\leq d$ with $\PP(\Omega_i)=1$ and: \\
For all $\omega\in\Omega_i$, all $t_0\in \mathcal{A}$ and all $\epsilon>0$, there exists $\rho_0>0$ such that for all $\rho\in (0,\rho_0]$,
\begin{align*}
\forall s,t\in B(t_0,\rho),\quad
|X^{(i)}_t(\omega)-X^{(i)}_s(\omega)| \leq \|t-s\|^{\liminf_{u\rightarrow t_0}\alphalocbb_{X^{(i)}}(u)-\epsilon}.
\end{align*}
This yields to: For all $\omega\in\bigcap_{1\leq i\leq d}\Omega_i$, all $t_0\in \mathcal{A}$ and all $\epsilon>0$, there exists $\rho_0>0$ such that for all $\rho\in (0,\rho_0]$,
\begin{align*}
\forall s,t\in B(t_0,\rho),\quad
\|X_t(\omega)-X_s(\omega)\| \leq K\ \|t-s\|^{\liminf_{u\rightarrow t_0}\alphalocbb_{X^{(i)}}(u)-\epsilon},
\end{align*}
for some constant $K>0$.

With the argument of Lemma \ref{lemdimHmaj}, we deduce 
\begin{align*}
\lim_{\rho\rightarrow 0}\dimH(\rg_X(B(t_0,\rho),\omega)) &\leq \lim_{\rho\rightarrow 0}\dimH(\gr_X(B(t_0,\rho),\omega)) \\
&\quad\leq \min\left\{N/\liminf_{u\rightarrow t_0}\alphalocbb_{X^{(i)}}(u) ; N + d(1-\liminf_{u\rightarrow t_0}\alphalocbb_{X^{(i)}}(u))\right\},
\end{align*}
which is the result stated.
\end{proof}

\medskip

\begin{corollary}\label{cormajdimHunif1}
Let $X=\{X_t;\;t\in\R^N_+\}$ be a multiparameter Gaussian field in $\R^d$ and $\alphalocbb_{X^{(i)}}(t_0)$ the deterministic local H\"older exponent of $X^{(i)}$ at $t_0\in\R_+^N$.\\
Assume that for some bounded interval $I\subset\R^N_+$, 
we have
$\alpha = \inf_{t_0\in I}\alphalocbb_{X^{(i)}}(t_0) >0$. 
Then, with probability one,
$$\dimH(\rg_X(I)) \leq \dimH(\gr_X(I))\leq \min\left\{N/\alpha ; N + d(1-\alpha)\right\}.$$

\end{corollary}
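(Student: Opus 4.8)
The plan is to deduce Corollary~\ref{cormajdimHunif1} from the uniform part of Proposition~\ref{propmajdimH} together with the fact that the Hausdorff dimension over a bounded interval is the supremum of the localized Hausdorff dimensions. First I would observe that since $\alpha = \inf_{t_0\in I}\alphalocbb_{X^{(i)}}(t_0)>0$, we certainly have $\liminf_{u\to t_0}\alphalocbb_{X^{(i)}}(u)\geq \alpha>0$ for every $t_0\in I$, so that $I\subseteq\mathcal{A}$, where $\mathcal{A}$ is the set from Proposition~\ref{propmajdimH}. Hence the uniform conclusion of that proposition applies: with probability one, for every $t_0\in I$,
\begin{align*}
\lim_{\rho\rightarrow 0}\dimH(\gr_X(B(t_0,\rho)))
\leq \min\left\{\frac{N}{\liminf_{u\rightarrow t_0}\alphalocbb_{X^{(i)}}(u)} ; N + d\bigl(1-\liminf_{u\rightarrow t_0}\alphalocbb_{X^{(i)}}(u)\bigr)\right\}.
\end{align*}
Using $\liminf_{u\to t_0}\alphalocbb_{X^{(i)}}(u)\geq\alpha$ and the monotonicity of $x\mapsto \min\{N/x, N+d(1-x)\}$ on $(0,1]$ (it is decreasing), the right-hand side is bounded above by $\min\{N/\alpha; N+d(1-\alpha)\}$ for every $t_0\in I$, simultaneously on an event of probability one.

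Next I would pass from the localized bound to the global bound over $I$. The key geometric fact is the countable stability of Hausdorff dimension: if $(U_k)_{k\in\N}$ is a countable family of open balls covering $I$, then $\gr_X(I)\subseteq\bigcup_k \gr_X(U_k)$ and hence $\dimH(\gr_X(I))=\sup_k\dimH(\gr_X(U_k))$. Because $I\subseteq\R^N_+$ is second countable, for each $t_0\in I$ and each small $\rho$ one can choose a ball $B(t_0,\rho)$ from a fixed countable basis, and $\dimH(\gr_X(B(t_0,\rho)))$ decreases to $\lim_{\rho\to 0}\dimH(\gr_X(B(t_0,\rho)))$ as $\rho\to0$; since $I$ is covered by countably many such basis balls whose radii can be taken arbitrarily small, we get
\begin{align*}
\dimH(\gr_X(I)) = \sup_{t_0\in D}\ \lim_{\rho\rightarrow 0}\dimH(\gr_X(B(t_0,\rho)))
\end{align*}
for a suitable countable dense $D\subseteq I$ (one takes a countable basis of $I$ of balls with shrinking radii, observes each is contained in some $B(t_0,\rho)$ with $t_0\in D$, and applies countable stability). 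Combining this with the displayed uniform bound yields, on the same probability-one event,
\begin{align*}
\dimH(\gr_X(I)) \leq \min\left\{\frac{N}{\alpha} ; N + d(1-\alpha)\right\},
\end{align*}
and the inequality $\dimH(\rg_X(I))\leq\dimH(\gr_X(I))$ follows as before from the fact that $\rg_X(I)$ is the image of $\gr_X(I)$ under the Lipschitz coordinate projection $(t,x)\mapsto x$.

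The main obstacle, and the only point requiring care, is the passage from pointwise (in $t_0$) localized statements to the global statement over $I$: one must make sure the probability-one event is chosen \emph{before} quantifying over $t_0\in I$, which is exactly what the uniform part of Proposition~\ref{propmajdimH} provides, and one must replace the uncountable cover $\{B(t_0,\rho)\}_{t_0\in I}$ by a countable subcover so that countable stability of $\dimH$ applies. Both are routine once the right-hand sides have been bounded uniformly by the constant $\min\{N/\alpha;N+d(1-\alpha)\}$ via the monotonicity remark; no new estimate on the Gaussian field is needed beyond what Proposition~\ref{propmajdimH} already gives.
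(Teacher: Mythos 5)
Your argument is correct in outline but takes a genuinely different route from the paper's. The paper globalizes at the level of the H\"older estimate: it first gets, on a single event of full probability, $\widetilde{\alphar}_X(t_0)\geq\alpha$ for all $t_0\in I$, patches the resulting local bounds into a global estimate $\|X_t(\omega)-X_s(\omega)\|\leq K'\|t-s\|^{\alpha-\epsilon}$ on all of $I$ using continuity and boundedness, extends this estimate to $\overline{I}$ by continuity of the sample paths, and then applies Yoder's theorem once to the compact interval $\overline{I}$ after an affine change of variables to $[0,1]^N$. You instead globalize at the level of the dimension, keeping the localized bounds from Proposition~\ref{propmajdimH} and invoking countable stability of $\dimH$ over a Lindel\"of subcover. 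Both strategies are viable; yours avoids patching H\"older constants, while the paper's avoids any covering argument and handles the closure of $I$ essentially for free.

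Two steps in your write-up need repair. First, the covering step: the uniform bound you import controls $\lim_{\rho\to 0}\dimH(\gr_X(B(t_0,\rho)))$, not $\dimH(\gr_X(B(t_0,\rho)))$ for the fixed radii occurring in any one countable cover, so the displayed equality $\dimH(\gr_X(I))=\sup_{t_0\in D}\lim_{\rho\to0}\dimH(\gr_X(B(t_0,\rho)))$ is not what your argument yields. What you need, and what is true, is the inequality $\dimH(\gr_X(I))\leq\sup_{t_0\in I}\lim_{\rho\to0}\dimH(\gr_X(B(t_0,\rho)))$: fix $\epsilon>0$, use monotonicity in $\rho$ to choose for each $t_0$ a radius $\rho(t_0)$ with $\dimH(\gr_X(B(t_0,\rho(t_0))))$ within $\epsilon$ of the limit, extract a countable subcover, apply countable stability, and let $\epsilon\to0$. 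Second, and more substantively, the inclusion $I\subseteq\mathcal{A}$ is automatic only for $t_0$ in the interior of $I$: at a boundary point of a non-open interval, $\liminf_{u\to t_0}\alphalocbb_{X^{(i)}}(u)$ sees points $u\notin I$ where the hypothesis $\inf_{t\in I}\alphalocbb_{X^{(i)}}(t)>0$ gives no control, so such $t_0$ need not belong to $\mathcal{A}$ and your cover has no admissible ball containing them. The corollary is meant to apply to closed intervals (the paper's proof explicitly distinguishes the compact and non-closed cases), and for $N\geq2$ the graph over $\partial I$ is not a priori negligible; unlike a H\"older estimate, a dimension bound on the interior does not pass to the closure by continuity, so you would have to treat the faces of $\partial I$ separately (e.g.\ by the same argument in dimension $N-1$) or fall back on the paper's closure-by-continuity device.
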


\begin{proof}
With the same arguments as in the proof of Proposition \ref{propmajdimH}, we can claim that, with probability one,
$\forall t_0\in I,\ \alpha\leq\widetilde{\alphar}_X(t_0).$
Then, there exists $\Omega_0\in\mathcal{F}$ with $\PP(\Omega_0)=1$ and:
For all $\omega\in\Omega_0$, all $t_0\in I$ and all $\epsilon>0$, there exist $\rho_0>0$ and $K>0$ such that $\forall \rho\in (0,\rho_0]$,
\begin{align*}
\forall s,t\in B(t_0,\rho),\quad
\| X_t(\omega)-X_s(\omega) \| \leq K\ \| t-s \|^{\alpha-\epsilon}.
\end{align*}

Then the continuity of $t\mapsto X_t(\omega)$ on the bounded interval $I$ allows to deduce that, for all $\omega\in\Omega_0$ and all $\epsilon>0$, there exists a constant $K'>0$ such that
\begin{align}\label{eq:maj-holder-global}
\forall s,t\in I,\quad
\| X_t(\omega)-X_s(\omega) \| \leq K'\ \| t-s \|^{\alpha-\epsilon}.
\end{align}

If the interval $I$ is compact, we can exhibit an affine one-to-one mapping $I\rightarrow [0,1]^N$ and conclude with the arguments of Lemma \ref{lemdimHmaj} that \cite{Yoder} implies
\begin{align*}
\dimH(\rg_{X_{\bullet}(\omega)}(I)) \leq \dimH(\gr_{X_{\bullet}(\omega)}(I)) 
\leq \min\left\{ \frac{N}{\alpha-\epsilon} ; N + d(1-\alpha+\epsilon)\right\}
\qquad\textrm{a.s.}
\end{align*}
Since this inequality stands for any $\epsilon>0$, the result follows in that case.

If $I$ is not closed, we remark that 
$$\dimH(\rg_{X_{\bullet}(\omega)}(I)) \leq \dimH(\rg_{X_{\bullet}(\omega)}(\overline{I}))\quad\textrm{and}\quad \dimH(\gr_{X_{\bullet}(\omega)}(I)) \leq \dimH(\gr_{X_{\bullet}(\omega)}(\overline{I})).$$
Then, extending the inequality (\ref{eq:maj-holder-global}) to $\overline{I}$ by continuity, the result for the compact interval $\overline{I}$ is proved as previously.
\end{proof}







\

\subsection{Lower bound for the Hausdorff dimension}\label{sec:low}

Frostman's Theorem constitutes the key argument to prove the lower bound for the Hausdorff dimensions. We recall the basic notions of potential theory, which are used along the proofs of this section.
For any Borel set $E\subseteq\R^d$, the $\beta$-dimensional energy of a probability measure $\mu$ on $E$ is defined by
\begin{equation*}
I_{\beta}(\mu) = \int_{E\times E} \|x-y\|^{-\beta}\ \mu(dx)\ \mu(dy).
\end{equation*}
Then, the $\beta$-dimensional Bessel-Riesz capacity of $E$ is defined as
$$ C_{\beta}(E) = \sup\left( \frac{1}{I_{\beta}(\mu)};\; \mu\textrm{ probability measure on }E \right). $$
According to Frostman's Theorem, the Hausdorff dimension of $E$ is obtained from the capacity of $E$ by the expression
\begin{align*}
\dimH E = \sup\left(\beta: C_{\beta}(E)>0\right)
=\inf\left(\beta: C_{\beta}(E)=0\right).
\end{align*}

Consequently, if $I_{\beta}(\mu) < +\infty$ for some probability measure (or some mass distribution) $\mu$ on $E$, then $\dimH E \geq \beta$.

\begin{proposition}\label{propmindimH}
Let $X=\{X_t;\;t\in\R^N_+\}$ be a multiparameter Gaussian field in $\R^d$ and $\alphainfbb_{X^{(i)}}(t_0)$ the deterministic local sub-exponent of $X^{(i)}$ at $t_0\in\R_+^N$.

Then, almost surely 
\begin{align*}
\lim_{\rho\rightarrow 0}\dimH(\gr_X(B(t_0,\rho))) \geq 
\left\{ \begin{array}{l l}
N/\alphainfbb_{X^{(i)}}(t_0) 
& \textrm{if }N\leq d\ \alphainfbb_{X^{(i)}}(t_0) ; \\
N + d(1-\alphainfbb_{X^{(i)}}(t_0)) 
& \textrm{if }N > d\ \alphainfbb_{X^{(i)}}(t_0) ;
\end{array} \right.
\end{align*}
and
\begin{align*}
\lim_{\rho\rightarrow 0}\dimH(\rg_X(B(t_0,\rho))) \geq 
\left\{ \begin{array}{l l}
N/\alphainfbb_{X^{(i)}}(t_0) 
& \textrm{if }N\leq d\ \alphainfbb_{X^{(i)}}(t_0) ; \\
d
& \textrm{if }N > d\ \alphainfbb_{X^{(i)}}(t_0).
\end{array} \right.
\end{align*}

\end{proposition}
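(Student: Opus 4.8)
The plan is to obtain the lower bound via Frostman's theorem, i.e. by exhibiting, for each $\beta$ strictly below the target value, a finite $\beta$-dimensional energy integral for the occupation-type measure on $\gr_X(B(t_0,\rho))$ (resp. $\rg_X(B(t_0,\rho))$). First I would fix $\epsilon>0$ and invoke Lemma \ref{lemcovinc}: there is $\rho_0>0$ such that for all $s,t\in B(t_0,\rho_0)$ one has $\EE[|X^{(i)}_t-X^{(i)}_s|^2]\geq\|t-s\|^{2(\alphainfbb_{X^{(i)}}(t_0)+\epsilon)}$. Writing $\alpha_\epsilon=\alphainfbb_{X^{(i)}}(t_0)+\epsilon$, this gives a uniform lower bound on the variance of each Gaussian increment, which is exactly what is needed to control the density of $X_t-X_s$ near the origin: since the coordinates $X^{(i)}$ are i.i.d. Gaussian, the random vector $X_t-X_s\in\R^d$ is Gaussian with i.i.d. coordinates each of variance $\sigma^2(s,t)=\EE[|X^{(i)}_t-X^{(i)}_s|^2]\geq\|t-s\|^{2\alpha_\epsilon}$, hence its density at any point is bounded by $(2\pi\sigma^2(s,t))^{-d/2}\leq C\|t-s\|^{-d\alpha_\epsilon}$.

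Next I would set up the energy integral. For the range, take $\mu$ to be the image under $t\mapsto X_t$ of normalized Lebesgue measure on $B(t_0,\rho)$, so that for $\beta<\min\{N/\alpha_\epsilon,d\}$,
\begin{align*}
\EE\,I_\beta(\mu)
= c\int_{B(t_0,\rho)}\int_{B(t_0,\rho)}\EE\big[\|X_t-X_s\|^{-\beta}\big]\,ds\,dt
\leq C\int_{B(t_0,\rho)}\int_{B(t_0,\rho)}\|t-s\|^{-\beta\alpha_\epsilon}\,ds\,dt,
\end{align*}
using the density bound above together with the standard fact that $\EE\|Z\|^{-\beta}$ for a $d$-dimensional Gaussian $Z$ with i.i.d. coordinates of variance $\sigma^2$ is finite as soon as $\beta<d$ and scales like $\sigma^{-\beta}$. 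The last double integral is finite precisely when $\beta\alpha_\epsilon<N$, i.e. $\beta<N/\alpha_\epsilon$. For the graph one replaces $\|X_t-X_s\|$ by $(\|t-s\|^2+\|X_t-X_s\|^2)^{1/2}$ and splits the integral according to whether $\|X_t-X_s\|\lesssim\|t-s\|$ or not; in the first regime the increment dominates and one integrates $\|t-s\|^{-\beta}$ (finite iff $\beta<N$), in the second the Gaussian density again produces a factor $\|t-s\|^{-d\alpha_\epsilon}$ leaving $\|t-s\|^{d\alpha_\epsilon-\beta}$ contributions which, combined with $ds\,dt$, are integrable iff $\beta-d\alpha_\epsilon<N$, i.e. $\beta<N+d\alpha_\epsilon$. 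Wait --- I should double-check the bookkeeping here: the correct threshold for the graph is $\beta<\min\{N/\alpha_\epsilon,N+d(1-\alpha_\epsilon)\}$, so the second regime must instead be handled by conditioning on $\|X_t-X_s\|\geq a$ and integrating $a^{-\beta}$ against the tail, which after the change of variables yields the $N+d(1-\alpha_\epsilon)$ threshold; this is the classical Adler/Kahane computation and I would carry it out exactly as in \cite{Adler77} or \cite{Xiao95}, localized to $B(t_0,\rho_0)$.

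Finally, from $\EE\,I_\beta(\mu)<+\infty$ one gets $I_\beta(\mu)<+\infty$ a.s., hence by Frostman $\dimH(\gr_X(B(t_0,\rho)))\geq\beta$ (resp. for the range) for every admissible $\beta$, for every $\rho\leq\rho_0$; letting $\beta$ increase to the threshold and then $\epsilon\to 0$ gives the stated bounds with $\alphainfbb_{X^{(i)}}(t_0)$ in place of $\alpha_\epsilon$. A minor point is that the null set on which the energy is infinite a priori depends on $\rho$ and $\beta$; this is handled by restricting to a countable sequence $\rho_n\downarrow 0$ and $\beta_k\uparrow$ the threshold, using monotonicity of $\rho\mapsto\dimH(\gr_X(B(t_0,\rho)))$. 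The main obstacle I anticipate is the graph estimate in the regime $N>d\,\alpha_\epsilon$: one must split the energy integral at the right scale and track the Gaussian tail carefully to land on $N+d(1-\alpha_\epsilon)$ rather than a weaker exponent --- this is where the i.i.d.\ Gaussian structure and the sharp lower variance bound from Lemma \ref{lemcovinc} are both essential.
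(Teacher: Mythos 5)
Your plan follows the same route as the paper: Frostman's theorem applied to the image of Lebesgue measure on $B(t_0,\rho)$, with the expected energy controlled through the lower bound $\sigma^2(s,t)=\EE[|X^{(i)}_t-X^{(i)}_s|^2]\geq\|t-s\|^{2(\alphainfbb_{X^{(i)}}(t_0)+\epsilon)}$ from Lemma~\ref{lemcovinc}, followed by a countable limiting procedure in $\rho$ and $\epsilon$. The range estimate, the bound $\EE\|X_t-X_s\|^{-\beta}\leq K\sigma(s,t)^{-\beta}$ for $\beta<d$, and the passage to the limit are all exactly as in the paper. (For the graph in the regime $N\leq d\,\alphainfbb_{X^{(i)}}(t_0)$ you need nothing new: the range is a projection of the graph, so the range bound $N/\alphainfbb_{X^{(i)}}(t_0)$ transfers for free.)

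The one genuine gap is the graph energy estimate in the regime $N>d\,\alphainfbb_{X^{(i)}}(t_0)$, which is the crux of the proposition and which you leave to ``the classical Adler/Kahane computation'' after your first attempt at the bookkeeping lands on the wrong threshold ($\beta<N+d\alpha_\epsilon$ instead of $\beta<N+d(1-\alpha_\epsilon)$). The computation the paper actually performs is short and worth having explicitly: for $\beta>d$, write
\begin{align*}
\EE\bigl[(\|X_t-X_s\|^2+\|t-s\|^2)^{-\beta/2}\bigr]
= K\,\sigma(s,t)^{-\beta}\int_{\R_+}\Bigl(z^2+\tfrac{\|t-s\|^2}{\sigma^2(s,t)}\Bigr)^{-\beta/2} z^{d-1}e^{-z^2/2}\,dz
\end{align*}
via polar coordinates and the substitution $r=\sigma(s,t)z$, then split the $z$-integral at $a=\|t-s\|/\sigma(s,t)$: on $[0,a]$ bound the first factor by $a^{-\beta}$ and integrate $z^{d-1}$, on $[a,\infty)$ bound it by $z^{-\beta}$ and integrate $z^{d-1-\beta}$ (convergent precisely because $\beta>d$). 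Both pieces give $a^{d-\beta}$, hence the key inequality $\EE[(\|X_t-X_s\|^2+\|t-s\|^2)^{-\beta/2}]\leq K\,\|t-s\|^{d-\beta}\sigma(s,t)^{-d}$. Only now does Lemma~\ref{lemcovinc} enter, turning $\sigma(s,t)^{-d}$ into $\|t-s\|^{-d(\alphainfbb_{X^{(i)}}(t_0)+\epsilon)}$, so the integrand is $\|t-s\|^{-\beta+d(1-\alphainfbb_{X^{(i)}}(t_0)-\epsilon)}$, integrable over $B(t_0,\rho)^2$ exactly when $\beta<N+d(1-\alphainfbb_{X^{(i)}}(t_0)-\epsilon)$. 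Your proposed split according to whether $\|X_t-X_s\|\lesssim\|t-s\|$ conflates the two regimes of the radial integral and is why your first count came out wrong; the split must be made at the scale $\|t-s\|/\sigma(s,t)$ \emph{inside} the Gaussian expectation, not on the event level.
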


\begin{proof}
Following the Adler's proof for the lower bound in the case of processes with stationary increments, we distinguish the two cases: $N\leq d\ \alphainfbb_{X^{(i)}}(t_0)$ and $N > d\ \alphainfbb_{X^{(i)}}(t_0)$. 

\begin{itemize}
\item Assume that $N\leq d\ \alphainfbb_{X^{(i)}}(t_0)$. 
In that case, we prove that almost surely,
\begin{align}\label{eqmindimH1}
\lim_{\rho\rightarrow 0}\dimH(\gr_X(B(t_0,\rho)) 
\geq \lim_{\rho\rightarrow 0}\dimH(\rg_X(B(t_0,\rho)) 
\geq \frac{N}{\alphainfbb_{X^{(i)}}(t_0)}.
\end{align}
For any $\epsilon>0$, we consider any $\beta<N/(\alphainfbb_{X^{(i)}}(t_0)+\epsilon) \leq d$ and we aim at showing that the $\beta$-dimensional capacity $C_\beta(\rg_X(B(t_0,\rho)))$ is positive almost surely for all $\rho>0$. 

\noindent With this intention, for $E=\rg_X(B(t_0,\rho))=X(B(t_0,\rho))$, we consider the $\beta$-dimensional energy $I_{\beta}(\mu)$ of the mass distribution $\mu = \lambda|_{B(t_0,\rho)} \circ X^{-1}$ of $E$, where $\lambda|_{B(t_0,\rho)}$ denotes the restriction of the Lebesgue measure to $B(t_0,\rho)$. As mentioned above (see also Theorem B in \cite{Taylor}), a sufficient condition for the capacity to be positive is that, almost surely
\begin{align}\label{eqcapa}
\int_{E\times E}\|x-y\|^{-\beta}\ \mu(dx)\ \mu(dy) =
\int_{B(t_0,\rho)\times B(t_0,\rho)}\|X_t-X_s\|^{-\beta}\ ds\ dt
< +\infty.
\end{align}
Since the $X^{(i)}$ are independent and have the same distribution, we compute for all $s,t\in\R^N_+$,
\begin{align*}
\EE\left[\|X_t-X_s\|^{-\beta}\right]=\frac{1}{[2\pi \sigma^2(s,t)]^{d/2}}
\int_{\R^d} \|x\|^{-\beta} \exp\left(-\frac{\|x\|^2}{2\ \sigma^2(s,t)}\right)\ dx,
\end{align*}
where $\sigma^2(s,t) = \EE[| X^{(i)}_t-X^{(i)}_s |^2]$ is independent of $1\leq i\leq d$.\\
Let us consider the change of variables $(\R_+\setminus\{0\}, \SS^{d-1})\rightarrow\R^d\setminus\{0\}$ defined by $(r,u)\mapsto r.u$, where $\SS^{d-1}$ denotes the unit hypersphere of $\R^d$. The previous expression becomes
\begin{align*}
\EE\left[\|X_t-X_s\|^{-\beta}\right] &= \frac{K_1}{[2\pi \sigma^2(s,t)]^{d/2}}
\int_{\R_+} r^{d-1-\beta} \exp\left(-\frac{r^2}{2\ \sigma^2(s,t)}\right)\ dr \\
&= K_1\ (\sigma(s,t))^{-\beta}\int_{\R_+} z^{d-1-\beta}\exp\left(-\frac{1}{2}z^2\right)\ dz,
\end{align*}
where $K_1$ is a positive constant and using the change of variables $r=\sigma(s,t)\ z$.\\
Since the integral is finite when $\beta<d$, we get 
\begin{equation}\label{eqCovInc_beta}
\forall s,t\in\R_+^N,\quad
\EE\left[\|X_t-X_s\|^{-\beta}\right]\leq K_2\ (\sigma(s,t))^{-\beta},
\end{equation}
for some positive constant $K_2$.\\
By Tonelli's theorem and Lemma \ref{lemcovinc}, this inequality implies the existence of $\rho_0>0$ such that for all $\rho\in (0,\rho_0]$, 
\begin{align*}
\EE\left[\int_{B(t_0,\rho)\times B(t_0,\rho)}\|X_t-X_s\|^{-\beta}\ dt\ ds\right]& \\
\leq \int_{B(t_0,\rho)\times B(t_0,\rho)}K_2\ &\|t-s\|^{-\beta(\alphainfbb_{X^{(i)}}(t_0) + \epsilon)}\ dt\ ds < +\infty
\end{align*}
because $\beta(\alphainfbb_{X^{(i)}}(t_0) + \epsilon)< N$.
Thus (\ref{eqcapa}) holds and for all $\rho\in (0,\rho_0]$,
\begin{align*}
\dimH(\rg_X(B(t_0,\rho))) 
\geq \frac{N}{\alphainfbb_{X^{(i)}}(t_0)+\epsilon} \qquad\textrm{a.s.}
\end{align*}
Taking $\rho,\epsilon\in\Q_+$, this yields to 
\begin{align*}
\lim_{\rho\rightarrow 0}\dimH(\rg_X(B(t_0,\rho))) 
\geq \frac{N}{\alphainfbb_{X^{(i)}}(t_0)} \qquad\textrm{a.s.},
\end{align*}
which proves (\ref{eqmindimH1}).

\

\item Assume $N>d\ \alphainfbb_{X^{(i)}}(t_0)$.
We use the previous method to prove that almost surely
\begin{equation}\label{eqmindimHrg2}
\lim_{\rho\rightarrow 0}\dimH(\rg_X(B(t_0,\rho)))\geq d.
\end{equation}
For any $\epsilon>0$ such that $d<N/(\alphainfbb_{X^{(i)}}(t_0)+\epsilon)$, consider any real $\beta$ such that $\beta<d$. 
As previously, we show that equation (\ref{eqcapa}) is verified, which implies that the $\beta$-dimensional capacity $C_\beta(\rg_X(B(t_0,\rho)))$ is positive almost surely for all $\rho>0$.

\noindent Since $\beta<d$, equation (\ref{eqCovInc_beta}) still holds.
As in the previous case, the inequality $\beta(\alphainfbb_{X^{(i)}}(t_0) + \epsilon)< N$ implies (\ref{eqcapa}) for $\rho$ small enough and then
\begin{align*}
\dimH(\rg_X(B(t_0,\rho))) \geq d \qquad\textrm{a.s.}
\end{align*}
Taking $\rho\in\Q_+$, the inequality (\ref{eqmindimHrg2}) follows.

\

\item Assume $N>d\ \alphainfbb_{X^{(i)}}(t_0)$.
To prove the lower bound for the Hausdorff dimension of the graph,  
\begin{equation}\label{eqmindimH2}
\lim_{\rho\rightarrow 0}\dimH(\gr_X(B(t_0,\rho)))\geq N + d(1-\alphainfbb_{X^{(i)}}(t_0)) \qquad\textrm{a.s.},
\end{equation}
we use the same arguments of potential theory than for the range.

\noindent For any $\epsilon>0$, consider any real $\beta$ such that $d<\beta<N+d(1-\alphainfbb_{X^{(i)}}(t_0)-\epsilon)$. 
In order to prove that the $\beta$-dimensional capacity $C_\beta(\gr_X(B(t_0,\rho)))$ is positive almost surely for all $\rho>0$, it is sufficient to show that 
\begin{equation}\label{eqcapa2}
\int_{B(t_0,\rho)\times B(t_0,\rho)} \|(t,X_t)-(s,X_s)\|^{-\beta}\ ds\ dt<+\infty 
\qquad\textrm{a.s.}
\end{equation}

\noindent
Since the components $X^{(i)}$ ($1\leq i\leq d$) of $X$ are i.i.d., we compute
\begin{align*}
&\EE\left[(\|X_t-X_s\|^2+\|t-s\|^2)^{-\beta/2}\right] \\
&\qquad =\frac{1}{[2\pi \sigma^2(s,t)]^{d/2}}
\int_{\R^d}\left(\|x\|^2+\|t-s\|^2\right)^{-\beta/2}
\exp\left(-\frac{\|x\|^2}{2\ \sigma^2(s,t)}\right)\ dx.
\end{align*}
As in the previous case, by using the hyperspherical change of variables $(r,u)\in\R_+\times\SS^{d-1}$ and then $r=\sigma(s,t)\ z$, we get
\begin{align*}
&\EE\left[(\|X_t-X_s\|^2 + \|t-s\|^2)^{-\beta/2}\right]  \\
&\qquad = K_3 \int_{\R_+} \left(z^2\sigma^2(s,t)+\|t-s\|^2\right)^{-\beta/2}\ z^{d-1}\ e^{-\frac{1}{2}z^2}\ dz \\
&\qquad = K_3\ \sigma(s,t)^{-\beta} \int_{\R_+} \left(z^2+\frac{\|t-s\|^2}{\sigma^2(s,t)}\right)^{-\beta/2}\ z^{d-1}\ e^{-\frac{1}{2}z^2}\ dz,
\end{align*}
where $K_3$ is a positive constant. Then, since $\beta>d$, the following inequality holds
\begin{align*}
&\EE\left[(\|X_t-X_s\|^2 + \|t-s\|^2)^{-\beta/2}\right]  \\
&\qquad \leq \frac{2^{-\beta/2}\ K_3}{\sigma(s,t)^{\beta}} \left[ \int_0^\frac{\|t-s\|}{\sigma(s,t)} 
\left( \frac{\|t-s\|}{\sigma(s,t)} \right)^{-\beta} z^{d-1}\ dz
+ \int_\frac{\|t-s\|}{\sigma(s,t)}^\infty z^{d-1-\beta}\ dz \right] \\
&\qquad \leq \frac{K_4}{\sigma(s,t)^{\beta}}\ \left( \frac{\|t-s\|}{\sigma(s,t)} \right)^{d-\beta} 
\leq K_4\ \frac{\|t-s\|^{d-\beta}}{\sigma(s,t)^d}.
\end{align*}

\noindent By Tonelli's Theorem and Lemma \ref{lemcovinc}, this inequality implies the existence of $\rho_0>0$ such that for all $\rho\in (0,\rho_0]$, 
\begin{align*}
&\EE\left[ \int_{B(t_0,\rho)\times B(t_0,\rho)} \|(t,X_t)-(s,X_s)\|^{-\beta}\ dt\ ds \right]\\
&\qquad\qquad\leq \int_{B(t_0,\rho)\times B(t_0,\rho)}  K_4\ 
\frac{\|t-s\|^{d-\beta}}{\sigma(s,t)^d}\ ds\ dt \\
&\qquad\qquad\leq \int_{B(t_0,\rho)\times B(t_0,\rho)}  K_4\ 
\|t-s\|^{-\beta+d(1-\alphainfbb_{X^{(i)}}(t_0)-\epsilon)}\ ds\ dt
<+\infty,
\end{align*}
because $\beta<N+d(1-\alphainfbb_{X^{(i)}}(t_0)-\epsilon)$. 
Thus (\ref{eqcapa2}) holds and for all $\rho\in (0,\rho_0]$,
\begin{equation*}
\dimH(\gr_X(B(t_0,\rho)) \geq N+d(1-\alphainfbb_{X^{(i)}}(t_0)-\epsilon)
\qquad\textrm{a.s.}
\end{equation*}
Taking $\rho,\epsilon\in\Q_+$, this yields to
\begin{equation*}
\lim_{\rho\rightarrow 0}\dimH(\gr_X(B(t_0,\rho)) \geq
N+d(1-\alphainfbb_{X^{(i)}}(t_0)) \qquad\textrm{a.s.}, 
\end{equation*}
which proves (\ref{eqmindimH2}).
\end{itemize}
\end{proof}

We now investigate uniform extensions of Proposition \ref{propmindimH}.

\begin{corollary}\label{cormindimHunif1}
Let $X=\{X_t;\;t\in\R^N_+\}$ be a multiparameter Gaussian field in $\R^d$ and $\alphainfbb_{X^{(i)}}(t)$ the deterministic local sub-exponent of $X^{(i)}$ at any $t\in\R_+^N$.

Assume that for some open subset $I \subset\R^N_+$, we have 
$\underline{\alpha} = \inf_{t\in I} \alphainfbb_{X^{(i)}}(t) > 0$.

Then, with probability one,
\begin{align*}
\dimH(\gr_X(I)) \geq 
\left\{ \begin{array}{l l}
N/\underline{\alpha} & \textrm{if } N \leq d\ \underline{\alpha} ; \\
N + d(1-\underline{\alpha}) & \textrm{if } N > d\ \underline{\alpha} ; 
\end{array} \right.
\end{align*}
and
\begin{align*}
\dimH(\rg_X(I)) \geq
\left\{ \begin{array}{l l}
N/\underline{\alpha} & \textrm{if } N \leq d\ \underline{\alpha} ; \\
d & \textrm{if } N > d\ \underline{\alpha}.
\end{array} \right.
\end{align*}
\end{corollary}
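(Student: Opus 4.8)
The plan is to upgrade Proposition~\ref{propmindimH} from a pointwise almost sure statement to a uniform one over the open set $I$, essentially by replacing the energy computation in a single ball $B(t_0,\rho)$ with a single energy computation on $I$ itself, using the uniform sub-exponent bound. The key observation is that the energy-integral argument of Proposition~\ref{propmindimH} never actually needed $\rho$ to be small for its own sake; it only needed a power-law upper bound $\sigma^2(s,t)\geq\|t-s\|^{2\alpha'}$ valid on the domain of integration. So first I would establish the deterministic fact that for every $\epsilon>0$ there is a constant $c_\epsilon>0$ with
\begin{equation*}
\forall s,t\in I,\quad \EE[|X^{(i)}_t-X^{(i)}_s|^2]\geq c_\epsilon\,\|t-s\|^{2(\underline{\alpha}+\epsilon)}.
\end{equation*}
This should follow by a compactness/covering argument: by Lemma~\ref{lemcovinc} applied at each $t_0\in I$ with $\alphainfbb_{X^{(i)}}(t_0)\leq\underline\alpha+\epsilon/2$ wait --- more carefully, $\alphainfbb_{X^{(i)}}(t_0)\geq\underline\alpha$ so $\|t-s\|^{2(\underline\alpha+\epsilon)}\leq\|t-s\|^{2(\alphainfbb_{X^{(i)}}(t_0)+\epsilon/2)}\leq\EE[|X^{(i)}_t-X^{(i)}_s|^2]$ for $s,t\in B(t_0,\rho_{t_0})$ once $\|t-s\|\leq 1$; covering a relatively compact subset by finitely many such balls and handling pairs $s,t$ that are far apart by positivity and continuity of the variance gives the global constant $c_\epsilon$ on each compact $K\subset I$, which is all we need since Hausdorff dimension of $\rg_X(I)$ and $\gr_X(I)$ are suprema of the dimensions over an exhaustion of $I$ by compacts.

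Next I would run the two capacity estimates exactly as in Proposition~\ref{propmindimH}, but with $B(t_0,\rho)$ replaced by a compact $K\subset I$ and with $\sigma(s,t)^{-\beta}$ bounded using $c_\epsilon$. For the range in the case $N\leq d\,\underline\alpha$: pick $\beta<N/(\underline\alpha+\epsilon)\leq d$, use \eqref{eqCovInc_beta} (valid since $\beta<d$) and the global lower bound on $\sigma^2$ to get $\EE[\|X_t-X_s\|^{-\beta}]\leq K_2 c_\epsilon^{-\beta/2}\|t-s\|^{-\beta(\underline\alpha+\epsilon)}$, which is Lebesgue-integrable on $K\times K$ because $\beta(\underline\alpha+\epsilon)<N$; Tonelli then gives $I_\beta(\mu)<\infty$ a.s.\ for $\mu=\lambda|_K\circ X^{-1}$, hence $\dimH(\rg_X(K))\geq\beta$ a.s., and letting $\beta\uparrow N/(\underline\alpha+\epsilon)$, $\epsilon\downarrow0$ along rationals and $K\uparrow I$ yields $\dimH(\rg_X(I))\geq N/\underline\alpha$. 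The same $\beta$ works for the graph since the graph dominates the range. In the case $N>d\,\underline\alpha$: for the range take any $\beta<d$ and argue identically to get $\dimH(\rg_X(I))\geq d$, while $\dimH(\rg_X(I))\leq d$ trivially, so equality; for the graph take $d<\beta<N+d(1-\underline\alpha-\epsilon)$, reuse the computation giving $\EE[(\|X_t-X_s\|^2+\|t-s\|^2)^{-\beta/2}]\leq K_4\,\|t-s\|^{d-\beta}/\sigma(s,t)^d\leq K_4 c_\epsilon^{-d/2}\,\|t-s\|^{-\beta+d(1-\underline\alpha-\epsilon)}$, integrable on $K\times K$ since $\beta<N+d(1-\underline\alpha-\epsilon)$, and conclude as before.

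The main obstacle is the first step: promoting the purely local, $t_0$-dependent lower bound of Lemma~\ref{lemcovinc} (where $\rho_0$ depends on $t_0$ and, a priori, on $\epsilon$ in a non-uniform way) to a bound with a single constant valid for \emph{all} pairs $s,t$ ranging over a compact $K\subset I$. The balls $B(t_0,\rho_{t_0})$ only control pairs that are \emph{both} close to a common center, so one must separately bound $\inf\{\EE[|X^{(i)}_t-X^{(i)}_s|^2]/\|t-s\|^{2(\underline\alpha+\epsilon)}: s,t\in K,\ \|s-t\|\geq\eta\}$ for the relevant Lebesgue number $\eta$ of the finite subcover; this infimum is positive because the numerator is continuous and strictly positive on the compact set $\{(s,t)\in K\times K:\|s-t\|\geq\eta\}$ (strict positivity of $\EE[|X^{(i)}_t-X^{(i)}_s|^2]$ for $s\neq t$ follows from $\alphalocbb_{X^{(i)}}(t_0)>0$, which is implied by $\underline\alpha>0$ via \eqref{ineqexp}), while the denominator is bounded above on $K$. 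Once this uniform variance lower bound is in hand, everything else is a verbatim repetition of the arguments already carried out in Proposition~\ref{propmindimH} and Corollary~\ref{cormajdimHunif1}, with $\rho,\epsilon$ and the exhausting compacts chosen along countable families so that the exceptional null sets can be collected into one.
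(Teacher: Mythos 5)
Your strategy breaks at its first step, and the failure is not repairable within that strategy. You want the uniform bound $\EE[|X^{(i)}_t-X^{(i)}_s|^2]\geq c_\epsilon\,\|t-s\|^{2(\underline{\alpha}+\epsilon)}$ for all $s,t$ in a compact $K\subset I$. But Lemma~\ref{lemcovinc} at a center $t_0$ only gives $\EE[|X^{(i)}_t-X^{(i)}_s|^2]\geq\|t-s\|^{2(\alphainfbb_{X^{(i)}}(t_0)+\epsilon/2)}$, and since $\alphainfbb_{X^{(i)}}(t_0)\geq\underline{\alpha}$ and $\|t-s\|\leq 1$, that right-hand side is $\leq\|t-s\|^{2(\underline{\alpha}+\epsilon)}$, not $\geq$: your chain ``$\|t-s\|^{2(\underline{\alpha}+\epsilon)}\leq\|t-s\|^{2(\alphainfbb_{X^{(i)}}(t_0)+\epsilon/2)}$'' is reversed whenever $\alphainfbb_{X^{(i)}}(t_0)>\underline{\alpha}+\epsilon/2$. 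Worse, the uniform bound you want is genuinely false whenever $\alphainfbb_{X^{(i)}}$ is non-constant on $I$: by the second line of definition (\ref{DetUpLocalHolder}), if $\alphainfbb_{X^{(i)}}(t_0)>\underline{\alpha}+\epsilon$ then there are pairs $s,t\rightarrow t_0$ with $\EE[|X^{(i)}_t-X^{(i)}_s|^2]/\|t-s\|^{2(\underline{\alpha}+\epsilon)}\rightarrow 0$. Your Lebesgue-number fix only treats pairs that stay at distance $\geq\eta$ from each other; the obstruction is nearby pairs clustering at a point where the sub-exponent exceeds $\underline{\alpha}$. Consequently a single energy integral of $\lambda|_K\circ X^{-1}$ over $K\times K$ can only be controlled using the exponent $\sup_{K}\alphainfbb_{X^{(i)}}+\epsilon$, yielding at best $\dimH(\rg_X(I))\geq N/\sup_{K}\alphainfbb_{X^{(i)}}$, which is weaker than the claimed $N/\underline{\alpha}=N/\inf_I\alphainfbb_{X^{(i)}}$.

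The correct localization — and the paper's proof — goes the other way. Since $I$ is open and $\rho\mapsto\dimH(\gr_X(B(t_0,\rho)))$ is nonincreasing, one has $\dimH(\gr_X(I))\geq\lim_{\rho\rightarrow 0}\dimH(\gr_X(B(t_0,\rho)))$ for every $t_0\in I$, and likewise for the range. It then suffices to invoke the pointwise Proposition~\ref{propmindimH} at well-chosen points: for each rational $\epsilon>0$ choose $t_0\in I$ with $\alphainfbb_{X^{(i)}}(t_0)<\underline{\alpha}+\epsilon$, obtaining almost surely $\lim_{\rho\rightarrow 0}\dimH(\rg_X(B(t_0,\rho)))\geq N/(\underline{\alpha}+\epsilon)$ (resp.\ $\geq d$) and $\lim_{\rho\rightarrow 0}\dimH(\gr_X(B(t_0,\rho)))\geq N+d(1-\underline{\alpha}-\epsilon)$ in the appropriate case, and intersect the countably many full-measure events. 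In other words, the measure in the energy argument must be concentrated in a small ball around a point where the sub-exponent is close to its infimum, not spread over all of $K$.
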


\begin{proof}
For any open subset $I\subset\R^N_+$, we first prove that for all $\omega$, the Hausdorff dimension of the graph of $X_{\bullet}(\omega):t\mapsto X_t(\omega)$ satisfies
\begin{align}\label{ineqdimHboules}
\dimH(\gr_{X_{\bullet}(\omega)}(I)) \geq \sup_{t_0\in I} \lim_{\rho\rightarrow 0}\dimH(\gr_{X_{\bullet}(\omega)}(B(t_0,\rho))).
\end{align}

Since $I$ is an open subset of $\R^N_+$, for all $t_0\in I$, there exists $\rho>0$ such that $B(t_0,\rho)\subset I$.
This leads to $\dimH(\gr_{X_{\bullet}(\omega)}(B(t_0,\rho))) \leq \dimH(\gr_{X_{\bullet}(\omega)}(I))$ and then
$$ \dimH(\gr_{X_{\bullet}(\omega)}(I)) \geq \lim_{\rho\rightarrow 0}\dimH(\gr_{X_{\bullet}(\omega)}(B(t_0,\rho))), $$
since $\rho\mapsto \dimH(\gr_{X_{\bullet}(\omega)}(B(t_0,\rho))) $ is decreasing.
Then (\ref{ineqdimHboules}) follows.

In the same way, we prove that for all $\omega$,
\begin{align}\label{ineqdimHboulesRg}
\dimH(\rg_{X_{\bullet}(\omega)}(I)) \geq \sup_{t_0\in I} \lim_{\rho\rightarrow 0}\dimH(\rg_{X_{\bullet}(\omega)}(B(t_0,\rho))).
\end{align}

Following the proof of Proposition \ref{propmindimH}, we distinguish the two cases: $N \leq d\ \underline{\alpha}$ and $N > d\ \underline{\alpha}$ with $\underline{\alpha} = \inf_{t\in I} \alphainfbb_{X^{(i)}}(t)$.

\begin{itemize}
\item Assume that $N \leq d\ \underline{\alpha}$. In that case, for all $t_0\in I$, we have $N \leq d\ \alphainfbb_{X^{(i)}}(t_0)$.
Equations (\ref{eqmindimH1}), (\ref{ineqdimHboules}) and (\ref{ineqdimHboulesRg}) imply almost surely
\begin{equation*}
\dimH(\gr_{X_{\bullet}(\omega)}(I)) \geq \dimH(\rg_{X_{\bullet}(\omega)}(I)) 
\geq \frac{N}{\underline{\alpha}}.
\end{equation*}

\item Assume that $N > d\ \underline{\alpha}$. By definition of $\underline{\alpha}$, for all $\epsilon>0$ with $N>d\ (\underline{\alpha} + \epsilon)$, there exists $t_0\in I$ such that
$$ \underline{\alpha} < \alphainfbb_{X^{(i)}}(t_0) < \underline{\alpha} + \epsilon. $$
Then, we have $N > d\ \alphainfbb_{X^{(i)}}(t_0)$. In the proof of Proposition \ref{propmindimH}, we proved that this implies almost surely
\begin{equation*}
\lim_{\rho\rightarrow 0}\dimH(\rg_X(B(t_0,\rho)))\geq d
\end{equation*}
and
\begin{align*}
\lim_{\rho\rightarrow 0}\dimH(\gr_X(B(t_0,\rho))) 
&\geq N + d(1-\alphainfbb_{X^{(i)}}(t_0)) \\
&\geq N + d(1-\underline{\alpha} - \epsilon)
\end{align*}
for all $\epsilon \in\Q_+$ with $N>d\ (\underline{\alpha} + \epsilon)$. 
Then almost surely,
\begin{equation*}
\sup_{t_0\in I}\lim_{\rho\rightarrow 0}\dimH(\rg_X(B(t_0,\rho)))\geq d
\end{equation*}
and
\begin{align*}
\sup_{t_0\in I}\lim_{\rho\rightarrow 0}\dimH(\gr_X(B(t_0,\rho))) 
\geq N + d(1-\underline{\alpha}).
\end{align*}
\end{itemize}
\end{proof}

\begin{corollary}\label{cormindimHunif2}
Let $X=\{X_t;\;t\in\R^N_+\}$ be a multiparameter Gaussian field in $\R^d$ and $\alphainfbb_{X^{(i)}}(t)$ the deterministic local sub-exponent of $X^{(i)}$ at any $t\in\R_+^N$.\\
Set $\underline{\mathcal{A}} = \{ t\in\R_+^N: \liminf_{u\rightarrow t}\alphainfbb_{X^{(i)}}(u)>0\}$. 

Then, with probability one, for all $t_0\in\underline{\mathcal{A}}$,
\begin{align*}
\lim_{\rho\rightarrow 0}\dimH(\gr_X(B(t_0,\rho))) \geq 
\left\{ \begin{array}{l l} 
\displaystyle
N/\liminf_{t\rightarrow t_0}\alphainfbb_{X^{(i)}}(t) 
& \displaystyle\textrm{if }N\leq d\ \liminf_{t\rightarrow t_0}\alphainfbb_{X^{(i)}}(t);\\ 
\displaystyle 
N + d\left(1-\liminf_{t\rightarrow t_0}\alphainfbb_{X^{(i)}}(t) \right)
& \displaystyle\textrm{if }N>d\ \liminf_{t\rightarrow t_0}\alphainfbb_{X^{(i)}}(t) ; 
\end{array} \right.
\end{align*}
and
\begin{align*}
\lim_{\rho\rightarrow 0}\dimH(\rg_X(B(t_0,\rho))) \geq 
\left\{ \begin{array}{l l}
\displaystyle
N/\liminf_{t\rightarrow t_0}\alphainfbb_{X^{(i)}}(t)
& \displaystyle\textrm{if } N \leq d\ \liminf_{t\rightarrow t_0} \alphainfbb_{X^{(i)}}(t) ; \\
d & \displaystyle\textrm{if } N>d\ \liminf_{t\rightarrow t_0} \alphainfbb_{X^{(i)}}(t). 
\end{array} \right.
\end{align*}
\end{corollary}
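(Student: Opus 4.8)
The plan is to deduce Corollary~\ref{cormindimHunif2} from Proposition~\ref{propmindimH} by the same localization device already used for the upper bound in Proposition~\ref{propmajdimH} and for the global lower bound in Corollary~\ref{cormindimHunif1}. The key point is that the estimates driving Proposition~\ref{propmindimH} — namely \eqref{eqcapa} and \eqref{eqcapa2} — only require the \emph{lower} bound $\EE[|X_t^{(i)}-X_s^{(i)}|^2]\geq \|t-s\|^{2\alpha}$ to hold on a fixed ball around $t_0$ with some exponent $\alpha$ strictly larger than $\alphainfbb_{X^{(i)}}(t_0)$. So I first want to replace, for each $t_0\in\underline{\mathcal{A}}$, the pointwise sub-exponent $\alphainfbb_{X^{(i)}}(t_0)$ by $\liminf_{u\to t_0}\alphainfbb_{X^{(i)}}(u)$, which only makes the hypothesis weaker (by \eqref{ineqexp} and the definitions, $\liminf_{u\to t_0}\alphainfbb_{X^{(i)}}(u)\leq\alphainfbb_{X^{(i)}}(t_0)$ need not hold, but what matters is that near $t_0$ the relevant variance lower bound is available with an exponent close to the liminf).

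The main technical ingredient will be a uniform-in-$t_0$ version of the lower variance bound. Concretely, I would invoke the analogue for the sub-exponent of Theorem~3.14 of \cite{2ml} used in Proposition~\ref{propmajdimH}: if $Y$ is a Gaussian process such that $t_0\mapsto\liminf_{u\to t_0}\alphainfbb_Y(u)$ is positive on $\underline{\mathcal{A}}$, then with probability one, for all $t_0\in\underline{\mathcal{A}}$ and all $\epsilon>0$ there exists $\rho_0>0$ such that $\EE[|Y_t-Y_s|^2]\geq\|t-s\|^{2(\liminf_{u\to t_0}\alphainfbb_Y(u)+\epsilon)}$ for all $s,t\in B(t_0,\rho_0)$ — but in fact, since the sub-exponent is defined purely deterministically through $\EE[|X_t-X_s|^2]$ (see \eqref{DetUpLocalHolder}), this is a \emph{deterministic} statement about the covariance structure and follows directly from the definition of $\liminf$ together with Lemma~\ref{lemcovinc} applied at each point: for each fixed $t_0$ and each $\epsilon>0$, choosing $u$ near $t_0$ with $\alphainfbb_{X^{(i)}}(u)<\liminf_{u\to t_0}\alphainfbb_{X^{(i)}}(u)+\epsilon/2$ and then a small ball around $t_0$ on which Lemma~\ref{lemcovinc} gives $\EE[|X_t^{(i)}-X_s^{(i)}|^2]\geq\|t-s\|^{2\alphainfbb_{X^{(i)}}(u)+\epsilon}$, one obtains the desired bound with exponent $\liminf_{u\to t_0}\alphainfbb_{X^{(i)}}(u)+\epsilon$.

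Granting this uniform lower variance bound, I would then rerun the two cases of the proof of Proposition~\ref{propmindimH} verbatim, with $\alphainfbb_{X^{(i)}}(t_0)$ systematically replaced by $\alpha^*:=\liminf_{u\to t_0}\alphainfbb_{X^{(i)}}(u)$. In the case $N\leq d\,\alpha^*$, for $\beta<N/(\alpha^*+\epsilon)\leq d$ the computations \eqref{eqCovInc_beta} and the Tonelli argument go through, because the exponent $-\beta(\alpha^*+\epsilon)$ is still $>-N$, yielding $\dimH(\rg_X(B(t_0,\rho)))\geq N/(\alpha^*+\epsilon)$ a.s.\ for small rational $\rho$, hence $\lim_{\rho\to 0}\dimH(\rg_X(B(t_0,\rho)))\geq N/\alpha^*$. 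In the case $N>d\,\alpha^*$, the same potential-theoretic estimate for $\beta<d$ gives the range bound $\geq d$, and the graph computation leading to $\EE[(\|X_t-X_s\|^2+\|t-s\|^2)^{-\beta/2}]\leq K_4\|t-s\|^{d-\beta}/\sigma(s,t)^d$ combined with the variance lower bound gives integrability for $d<\beta<N+d(1-\alpha^*-\epsilon)$, hence the graph bound $\geq N+d(1-\alpha^*)$.

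The one genuine subtlety — the step I expect to be the main obstacle — is the \emph{uniformity of the null set}: Proposition~\ref{propmindimH} produces, for each fixed $t_0$, a null set outside which the bounds hold, and a naive union over all $t_0\in\underline{\mathcal{A}}$ is uncountable. The fix is the standard one used throughout this section: the event in \eqref{eqcapa} resp.\ \eqref{eqcapa2} has its \emph{expectation} finite by a bound that, after the uniform deterministic variance estimate above, involves only countably many parameters (rational $\rho$, rational $\epsilon$), and the finiteness of the expected energy integral over $B(t_0,\rho)^2$ can be upgraded to a single full-probability event on which \emph{simultaneously} every such integral (over rational balls, with rational $\epsilon$) is finite — here one uses that $\underline{\mathcal{A}}$, and more precisely the collection of relevant balls $B(t_0,\rho)$ with $t_0,\rho$ rational, is countable, and that for each such ball one controls the energy with a fixed exponent depending only on $\inf$ of the sub-exponent over that ball. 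Intersecting these countably many full-probability events, and then passing from $t_0$ rational (or from rational balls $B(t_0,\rho)\subset\underline{\mathcal{A}}$) to arbitrary $t_0\in\underline{\mathcal{A}}$ via monotonicity of $\rho\mapsto\dimH(\gr_X(B(t_0,\rho)))$ and the inclusion of a rational ball inside any given ball, yields the stated uniform-in-$t_0$ conclusion.
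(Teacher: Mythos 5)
Your overall architecture (reduce to the pointwise result, then tame the uncountable union of null sets via countably many rational parameters) is the right one, but the central technical step you rely on is false, and this breaks the proof. You claim that for each $t_0\in\underline{\mathcal{A}}$ and each $\epsilon>0$ there is a ball $B(t_0,\rho_0)$ on which $\EE[|X^{(i)}_t-X^{(i)}_s|^2]\geq\|t-s\|^{2(\alpha^*+\epsilon)}$ for \emph{all} pairs $s,t$, where $\alpha^*=\liminf_{u\to t_0}\alphainfbb_{X^{(i)}}(u)$, and you justify this by applying Lemma~\ref{lemcovinc} at a nearby point $u$ with $\alphainfbb_{X^{(i)}}(u)<\alpha^*+\epsilon/2$. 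But Lemma~\ref{lemcovinc} applied at $u$ only yields the lower variance bound on some (possibly tiny) ball $B(u,\rho_u)$ \emph{around $u$}, not on a ball around $t_0$, so the asserted uniform bound is a non sequitur --- and it is simply not true in general. Concretely, take $N=d=1$, $X_t=W_{f(t)}$ a time-changed Brownian motion with $f(t)=(t-1)^3+1$ near $t_0=1$, so that $\EE[|X_t-X_s|^2]=|f(t)-f(s)|$. Then $\alphainfbb_X(u)=1/2$ for every $u\neq 1$, hence $\alpha^*=1/2$, yet the pairs $t=1+h$, $s=1-h$ satisfy $\EE[|X_t-X_s|^2]=\tfrac14|t-s|^3$, which violates $\EE[|X_t-X_s|^2]\geq\|t-s\|^{1+2\epsilon}$ on every ball around $1$ (indeed $\alphainfbb_X(1)=3/2>\alpha^*$). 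Consequently, rerunning Proposition~\ref{propmindimH} verbatim with $\alphainfbb_{X^{(i)}}(t_0)$ replaced by $\alpha^*$ is not legitimate: the expected energy integrals in \eqref{eqcapa} and \eqref{eqcapa2} over $B(t_0,\rho)\times B(t_0,\rho)$ can diverge for the relevant exponents $\beta$. The same objection undermines your final paragraph, where you again assume the energy over a whole rational ball is controlled by the infimum of the sub-exponent over that ball; a pointwise lower bound on $\alphainfbb_{X^{(i)}}$ over a ball does not yield a variance lower bound for all pairs in that ball, only local bounds with non-uniform radii.

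The missing idea --- and what the paper actually does --- is to avoid any variance estimate on all of $B(t_0,\rho)$ and instead use monotonicity of the Hausdorff dimension under inclusion. Every ball $B(t_0,\rho)$ contains rational intervals $(a,b)$ with $\inf_{t\in(a,b)}\alphainfbb_{X^{(i)}}(t)$ arbitrarily close to $\alpha^*$; Corollary~\ref{cormindimHunif1} (itself proved by applying Proposition~\ref{propmindimH} at well-chosen points $u\in(a,b)$ where the sub-exponent nearly attains the infimum, and again by inclusion of small balls around those $u$) bounds $\dimH(\gr_X((a,b)))$ and $\dimH(\rg_X((a,b)))$ from below on a single event of probability one valid simultaneously for all rational intervals. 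Letting $(a_n,b_n)$ shrink to $t_0$ then gives the stated uniform conclusion. In my counterexample this distinction is not cosmetic: the corollary's bound $3/2$ at $t_0=1$ is strictly better than what the pointwise proposition gives at $t_0=1$ itself, so it can only be obtained through nearby points, never through an estimate centered at $t_0$.
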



\begin{proof}
Corollary \ref{cormindimHunif1} implies the existence of $\Omega^*\in\mathcal{F}$ with $\PP(\Omega^*)=1$ such that: For all $\omega\in\Omega^*$ and all $a,b\in\Q^N_+$ with $a\prec b$, such that $\underline{\alpha} = \inf_{t\in (a,b)} \alphainfbb_{X^{(i)}}(t) > 0$, we have 
$\dimH(\gr_{X_{\bullet}(\omega)}((a,b))) 
\geq N/\underline{\alpha}$ if $N\leq d\ \underline{\alpha}$ and $\geq N + d(1-\underline{\alpha})$ if $N> d\ \underline{\alpha}$ and 
$\dimH(\rg_{X_{\bullet}(\omega)}((a,b))) 
\geq N/\underline{\alpha}$ if $N\leq d\ \underline{\alpha}$ and $\geq d$ if $N> d\ \underline{\alpha}$.

Therefore, taking two sequences $(a_n)_{n\in\N}$ and $(b_n)_{n\in\N}$ such that $\forall n\in\N$, $a_n<t_0<b_n$ and converging to $t_0$, we get
\begin{align*}
\lim_{n\rightarrow\infty}\dimH(\gr_{X_{\bullet}(\omega)}((a_n,b_n))) \geq 
\left\{ \begin{array}{l l} 
\displaystyle
N/\liminf_{t\rightarrow t_0}\alphainfbb_{X^{(i)}}(t) 
& \displaystyle\textrm{if }N\leq d\ \liminf_{t\rightarrow t_0}\alphainfbb_{X^{(i)}}(t);\\ 
\displaystyle 
N + d(1-\liminf_{t\rightarrow t_0}\alphainfbb_{X^{(i)}}(t) )
& \displaystyle\textrm{if }N>d\ \liminf_{t\rightarrow t_0}\alphainfbb_{X^{(i)}}(t) ; 
\end{array} \right.
\end{align*}
and
\begin{align*}
\lim_{n\rightarrow\infty}\dimH(\rg_{X_{\bullet}(\omega)}((a_n,b_n))) \geq 
\left\{ \begin{array}{l l}
\displaystyle
N/\liminf_{t\rightarrow t_0}\alphainfbb_{X^{(i)}}(t)
& \displaystyle\textrm{if } N \leq d\ \liminf_{t\rightarrow t_0}\alphainfbb_{X^{(i)}}(t) ; \\
d & \displaystyle\textrm{if } N>d\ \liminf_{t\rightarrow t_0}\alphainfbb_{X^{(i)}}(t). 
\end{array} \right.
\end{align*}
By monotony of the Hausdorff dimension, the result follows.
\end{proof}

\medskip


\section{Applications}\label{sec:app}

In this section, we apply the main results to Gaussian processes whose fine regularity is not completely known: the multiparameter fractional Brownian motion, the multifractional Brownian motion with a regularity function lower than its own regularity and the generalized Weierstrass function.

\subsection{Multiparameter fractional Brownian motion}\label{sec:mpfbm}

The multiparameter fractional Brownian motion (MpfBm) $\sifbm^H=\{ \sifbm^H_t;\; t\in\R_+^N\}$ of index $H\in (0,1/2]$ is defined as a particular case of set-indexed fractional Brownian motion (see \cite{sifBm, MpfBm}), where the indexing collection is $\mathcal{A}=\{ [0,t];\; t\in\R_+^N\} \cup\{\emptyset\}$.
It is characterized as a real-valued mean-zero Gaussian process with covariance function
\begin{align*}
\forall s,t\in\R_+^N,\quad
\EE[\sifbm^H_s \sifbm^H_t] = \frac{1}{2} \left[ m([0,s])^{2H} + m([0,t])^{2H} - m([0,s]\bigtriangleup [0,t])^{2H} \right],
\end{align*}
where $m$ denotes a Radon measure in $\R^N_+$.

In the specific case where $N=2$ and $m$ is the Lebesgue measure of $\R^2_+$, the covariance structure of the MpfBm is 
\begin{align*}
\forall s,t\in\R_+^2,\quad
\EE[\sifbm^H_s \sifbm^H_t] = \frac{1}{2} \left[ (s_1s_2)^{2H}+(t_1t_2)^{2H}-(s_1s_2+t_1t_2-2(s_1\wedge t_1)(s_2\wedge t_2))^{2H} \right].
\end{align*}

Then, its incremental variance is
\begin{align}\label{eqcovincMpfBm}
\forall s,t\in \R_+^2,\quad 
\EE\left[|\sifbm^H_t-\sifbm^H_s|^2\right]=(s_1 s_2 + t_1 t_2 - 2 (s_1\wedge t_1)(s_2\wedge t_2))^{2H}.
\end{align}

The stationarity of the increments of the multiparameter fractional Brownian motion are studied in \cite{MpfBm}. Among all the various definitions of the stationarity property for a multiparameter process, the MpfBm does not satisfy the increment stationarity assumption of \cite{Adler77}. Indeed, (\ref{eqcovincMpfBm}) shows that $\EE\left[|\sifbm^H_t-\sifbm^H_s|^2\right]$ does not only depend on $t-s$.
Since the Hausdorff dimension of its graph does not come directly from \cite{Adler77},
we use the generic results of Section \ref{sec:main}.

\begin{lemma}\label{lemdist}
If $m$ is the Lebesgue measure of $\R^N$, for any $a\prec b$ in 
$\R^N_{+}\setminus\left\{0\right\}$, there exists two positive constants
$m_{a,b}$ and $M_{a,b}$ such that
\begin{equation*}
\forall s,t\in [a,b];\quad
m_{a,b}\ d_1(s,t)\leq m([0,s]\bigtriangleup [0,t])
\leq M_{a,b}\ d_{\infty}(s,t)
\end{equation*}
where $d_1$ and $d_{\infty}$ are the usual distances of $\R^N$ defined by
\begin{align*}
d_1:(s,t)&\mapsto\|t-s\|_1=\sum_{i=1}^N |t_i-s_i| \\
d_{\infty}:(s,t)&\mapsto\|t-s\|_{\infty}=\max_{1\leq i\leq N} |t_i-s_i|.
\end{align*}

\end{lemma}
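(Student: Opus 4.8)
The plan is to reduce the symmetric-difference measure $m([0,s]\triangle[0,t])$ to an explicit expression in the coordinates of $s$ and $t$, and then to bound that expression from below by a multiple of $\|t-s\|_1$ and from above by a multiple of $\|t-s\|_\infty$ on the compact rectangle $[a,b]$. First I would note that $m([0,s]\triangle[0,t]) = m([0,s]) + m([0,t]) - 2\,m([0,s]\cap[0,t])$, and since $[0,s]\cap[0,t] = [0, s\wedge t]$ (coordinatewise minimum) when $m$ is Lebesgue measure we get $m([0,s]\triangle[0,t]) = \prod_{i=1}^N s_i + \prod_{i=1}^N t_i - 2\prod_{i=1}^N (s_i\wedge t_i)$. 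For $N=2$ this recovers the expression $s_1 s_2 + t_1 t_2 - 2(s_1\wedge t_1)(s_2\wedge t_2)$ appearing in (\ref{eqcovincMpfBm}).

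For the upper bound, I would write the telescoping identity that expresses $\prod s_i - \prod(s_i\wedge t_i)$ and $\prod t_i - \prod(s_i\wedge t_i)$ as sums of terms each containing exactly one factor of the form $s_i - (s_i\wedge t_i) = (s_i - t_i)_+$ or $t_i - (s_i\wedge t_i) = (t_i - s_i)_+$, with the remaining factors bounded by $\max_i b_i =: B$. Since $(s_i-t_i)_+ + (t_i-s_i)_+ = |s_i - t_i| \le \|t-s\|_\infty$, summing over $i$ gives $m([0,s]\triangle[0,t]) \le C(N)\, B^{N-1}\, \|t-s\|_\infty$, which furnishes $M_{a,b}$.

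For the lower bound, the point is that at least one coordinate realizes $|s_i - t_i| = \|t-s\|_\infty \ge \frac1N \|t-s\|_1$, and in the telescoping sum the corresponding nonnegative term is bounded below by $(\min_j a_j)^{N-1} |s_i - t_i|$ because all the other factors are coordinates lying in $[a_j, b_j]$ with $a_j > 0$ (here $a \prec b$ in $\R^N_+\setminus\{0\}$ is used to guarantee $a_j > 0$, so $m_{a,b} := (\min_j a_j)^{N-1}/N > 0$); since every term in the telescoping sum is nonnegative, discarding all but that one term yields $m([0,s]\triangle[0,t]) \ge m_{a,b}\, \|t-s\|_1$. The main obstacle is bookkeeping: organizing the telescoping decomposition of $\prod s_i + \prod t_i - 2\prod(s_i\wedge t_i)$ into manifestly nonnegative pieces so that both bounds fall out cleanly, and checking that no cancellation can occur — but once one writes, for each $i$, a chain interpolating between $s\wedge t$ and $s$ (resp. $t$) one coordinate at a time, every increment is of one sign and the estimates are routine. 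I would also record that all the usual norms on $\R^N$ are equivalent, so the precise choice of $d_1$ on the left and $d_\infty$ on the right is only a matter of getting the sharpest constants; what matters downstream is that $m([0,s]\triangle[0,t])$ is comparable to $\|t-s\|$ on $[a,b]$.
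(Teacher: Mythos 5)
Your proof is correct, but it takes a genuinely different route from the paper's. The paper writes $[0,s]\bigtriangleup[0,t]$ as $([0,s]\setminus[0,t])\cup([0,t]\setminus[0,s])$ and decomposes each piece into boxes indexed by proper subsets $J\subsetneq I$ (with $I=\{i:s_i>t_i\}$), so its sum contains terms with \emph{several} factors $|t_i-s_i|$; the upper bound then follows by bounding coordinates by $|b|$, but the lower bound is handled by identifying $\sum_{J\subsetneq I}\prod_{i\in I\setminus J}|t_i-s_i|$ with $\prod_{i\in I}(1+|t_i-s_i|)-1$ and invoking a logarithmic expansion, which leaves an $o(\|t-s\|_\infty)$ error term and, as written, only establishes the inequality for $s,t$ close together. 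You instead start from the inclusion--exclusion identity $m([0,s]\bigtriangleup[0,t])=\prod_i s_i+\prod_i t_i-2\prod_i(s_i\wedge t_i)$ and telescope each difference of products one coordinate at a time, so that every increment carries exactly one factor $(s_k-t_k)_+$ or $(t_k-s_k)_+$ and is manifestly nonnegative. That is what each approach buys: your decomposition yields both bounds by exact term-by-term estimates, uniformly on all of $[a,b]$, with explicit constants and no asymptotic argument. In fact you can do slightly better than you state: summing \emph{all} the telescoping terms and bounding the remaining coordinate factors below by $\min_j a_j$ gives $m([0,s]\bigtriangleup[0,t])\ge(\min_j a_j)^{N-1}\,d_1(s,t)$ directly, so there is no need to single out the coordinate achieving the sup norm or to pay the factor $1/N$. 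The bookkeeping point you flagged --- that each interpolation step between $s\wedge t$ and $s$ (resp.\ $t$) changes one coordinate monotonically, so no cancellation occurs --- is exactly the observation that makes the discard-terms lower bound legitimate.
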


\begin{proof}
For all $s,t\in [a,b]$, we write
\begin{align*}
[0,s] \bigtriangleup [0,t]=\left([0,s]\setminus [0,t]\right) \cup 
\left([0,t]\setminus [0,s]\right).
\end{align*}
Suppose that for all $i\in I\subset\left\{1,\dots,N\right\}$, $s_i>t_i$, 
and that for all $i\in\left\{1,\dots,N\right\}\setminus I$, $s_i\leq t_i$.
For any subset $J$ of $\{1,\dots,N\}$, we denote by $\prod_{i\in J}[0,s_i]$ the cartesian product of $[0,s_i]$ for $i\in J$.

\noindent We have
\begin{align*}
[0,s] &= \prod_{i\notin I}[0,s_i] \times \prod_{i\in I}\left([0,t_i]\cup [t_i,s_i]\right) \\
&= \left( \prod_{i\notin I}[0,s_i] \times \prod_{i\in I}[0,t_i] \right) \cup
\bigcup_{J\subsetneq I}\left( \prod_{i\notin I}[0,s_i] \times \prod_{i\in J}[0,t_i]
\times \prod_{i\in I\setminus J}[t_i,s_i] \right),
\end{align*}
and then
\begin{align*}
[0,s]\setminus [0,t] &= \bigcup_{J\subsetneq I}\left( \prod_{i\notin I}[0,s_i] \times \prod_{i\in J}[0,t_i] \times \prod_{i\in I\setminus J}[t_i,s_i] \right) \\
&= \left\{ x\in [0,s]:\;\exists i\in I;\; t_i<x_i\leq s_i \right\}.
\end{align*}
We deduce 
\begin{align*}
m([0,s]\setminus [0,t]) = \prod_{i\notin I} |s_i|\  \sum_{J\subsetneq I} \left( \prod_{i\in J} |t_i|
\prod_{i\in I\setminus J} |t_i-s_i| \right).
\end{align*}
In the same way, we get
\begin{align*}
m([0,t]\setminus [0,s]) = \prod_{i\in I} |s_i|\  \sum_{J\subsetneq I^c} \left( 
\prod_{i\in J} |t_i| \prod_{i\in I^c\setminus J} |t_i-s_i| \right).
\end{align*}

\noindent 
For all $1\leq i\leq N$, we have $|a| \leq |s_i| \leq |b|$ and $|a| \leq |t_i| \leq |b|$.
Then, 
\begin{align*}
m([0,s] &\bigtriangleup [0,t]) \\
&\leq |b|^{\# I^c} \sum_{J\subsetneq I} |b|^{\# J} 
d_{\infty}(s,t)^{\#(I\setminus J)} + |b|^{\# I} \sum_{J\subsetneq I^c} |b|^{\# J} 
d_{\infty}(s,t)^{\#(I^c\setminus J)} \\
&\leq d_{\infty}(s,t)\ \underbrace{\left[ |b|^{\# I^c} \sum_{J\subsetneq I} |b|^{\# J} 
d_{\infty}(s,t)^{\#(I\setminus J)-1} + |b|^{\# I} \sum_{J\subsetneq I^c} |b|^{\# J} 
d_{\infty}(s,t)^{\#(I^c\setminus J)-1}\right]}_{\textrm{bounded in }[a,b]}\\
&\leq M_{a,b}\ d_{\infty}(s,t).
\end{align*}
For the lower bound, we write
\begin{align*}
m([0,s] \bigtriangleup [0,t]) 
\geq |a|^{\# I^c} \sum_{J\subsetneq I} |a|^{\# J} 
\prod_{i\in I\setminus J} |t_i-s_i| + |a|^{\# I} \sum_{J\subsetneq I^c} |a|^{\# J} 
\prod_{i\in I^c\setminus J} |t_i-s_i| 
\end{align*}
Let $m_a$ be the minimum of $|a|^k$ for $1\leq k\leq N$. We get
\begin{align}\label{eqminda}
m([0,s] \bigtriangleup [0,t])
\geq m_a^2 \sum_{J\subsetneq I}  \prod_{i\in I\setminus J} |t_i-s_i| 
+ m_a^2 \sum_{J\subsetneq I^c} \prod_{i\in I^c\setminus J} |t_i-s_i|.
\end{align}

\noindent Let us remark that
\begin{align*}
\sum_{J\subsetneq I}  \prod_{i\in I\setminus J} |t_i-s_i| 
= \prod_{i\in I} \left(1+|t_i-s_i|\right) - 1.
\end{align*}
Using the expansion
\begin{align*}
\log\prod_{i\in I} \left(1+|t_i-s_i|\right) = \sum_{i\in I}\log\left(1+|t_i-s_i|\right)
=\sum_{i\in I} |t_i-s_i| + o(|t_i-s_i|^2),
\end{align*}
which implies
\begin{align*}
\prod_{i\in I} \left(1+|t_i-s_i|\right) = 1+\sum_{i\in I} |t_i-s_i| + o(|t_i-s_i|^2),
\end{align*}
the inequality (\ref{eqminda}) becomes
\begin{align*}
m([0,s] \bigtriangleup [0,t])
\geq m_a^2 \sum_{1\leq i\leq N} |t_i-s_i| + o(\|t-s\|_{\infty}).
\end{align*}
The result follows.
\end{proof}

\medskip

\begin{lemma}\label{lemMpfBmexp}
Let $\sifbm^H=\{ \sifbm^H_t;\; t\in\R_+^N\}$ be a multiparameter fractional Brownian motion with index $H\in (0, 1/2]$. The deterministic local H\"older exponent and deterministic local sub-exponent of $\sifbm^H$ at any $t_0\in\R_+^N$ is given by
$\alphalocbb_X(t_0)=\alphainfbb_X(t_0)=H$.
\end{lemma}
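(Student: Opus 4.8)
The strategy is to estimate the incremental variance $\EE[|\sifbm^H_t-\sifbm^H_s|^2]$ from above and below by constant multiples of $\|t-s\|^{2H}$ in a neighbourhood of any fixed $t_0\in\R_+^N$, and then read off both exponents from the definitions \eqref{DetLocalHolder} and \eqref{DetUpLocalHolder}. By \eqref{eqcovincMpfBm} (or its $N$-parameter analogue coming from the set-indexed definition), one has $\EE[|\sifbm^H_t-\sifbm^H_s|^2]=m([0,s]\bigtriangleup[0,t])^{2H}$, so everything reduces to controlling the measure of the symmetric difference $m([0,s]\bigtriangleup[0,t])$, which is exactly the content of Lemma \ref{lemdist}.

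First I would fix $t_0\in\R_+^N\setminus\{0\}$ and choose $a\prec b$ in $\R_+^N\setminus\{0\}$ with $t_0$ in the interior of $[a,b]$; this is possible as long as $t_0$ has all coordinates strictly positive, and I would treat (or dismiss as a boundary case) the situation where some coordinate of $t_0$ vanishes separately, since $m$ is Radon and the same estimates localize. Applying Lemma \ref{lemdist} on $[a,b]$ gives constants $m_{a,b},M_{a,b}>0$ with
\begin{equation*}
m_{a,b}\,\|t-s\|_1\leq m([0,s]\bigtriangleup[0,t])\leq M_{a,b}\,\|t-s\|_\infty,\qquad s,t\in[a,b].
\end{equation*}
Raising to the power $2H$ and using the equivalence of the norms $\|\cdot\|_1$, $\|\cdot\|_\infty$ and $\|\cdot\|$ on $\R^N$, I obtain constants $c,C>0$ and a radius $\rho_0>0$ (small enough that $B(t_0,\rho_0)\subset[a,b]$) such that
\begin{equation*}
\forall s,t\in B(t_0,\rho_0),\qquad c\,\|t-s\|^{2H}\leq\EE[|\sifbm^H_t-\sifbm^H_s|^2]\leq C\,\|t-s\|^{2H}.
\end{equation*}

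From this two-sided bound the conclusion is immediate. For the upper exponent: for any $\alpha<H$ the ratio $\EE[|\sifbm^H_t-\sifbm^H_s|^2]/\|t-s\|^{2\alpha}\le C\|t-s\|^{2(H-\alpha)}\to 0$ as $\|t-s\|\to0$, so $\sup_{s,t\in B(t_0,\rho)}$ of this ratio has limit $0$, giving $\alphalocbb_{\sifbm^H}(t_0)\ge H$; and for $\alpha>H$ the lower bound $c\|t-s\|^{2(H-\alpha)}$ shows the $\sup$ is unbounded, so $\alphalocbb_{\sifbm^H}(t_0)\le H$. Symmetrically, for $\alpha<H$ the lower bound forces $\inf_{s,t\in B(t_0,\rho)}$ of the ratio with exponent $2\alpha$ to tend to $+\infty$, and for $\alpha>H$ the upper bound forces it to tend to $0$, whence $\alphainfbb_{\sifbm^H}(t_0)=H$ by \eqref{DetUpLocalHolder}. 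This yields $\alphalocbb_{\sifbm^H}(t_0)=\alphainfbb_{\sifbm^H}(t_0)=H$.

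The only real work is Lemma \ref{lemdist}, which is already proved; the remaining obstacle is purely bookkeeping — handling the degenerate case where $t_0$ lies on the boundary of $\R_+^N$ (some coordinate zero), where $[0,t_0]$ is a lower-dimensional face, and making sure the constants in Lemma \ref{lemdist} can still be arranged on a genuine box containing a ball around $t_0$. One can circumvent this either by noting that the MpfBm's natural index set excludes such degenerate points, or by a direct computation of $m([0,s]\bigtriangleup[0,t])$ near such a $t_0$ showing the same $\|t-s\|$-comparability (possibly with $N$ replaced by the number of nonzero coordinates), which does not change the exponent $H$.
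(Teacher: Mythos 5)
Your proof follows the same route as the paper's: reduce everything to Lemma \ref{lemdist} via the identity $\EE[|\sifbm^H_t-\sifbm^H_s|^2]=m([0,s]\bigtriangleup[0,t])^{2H}$, deduce a two-sided comparison of the incremental variance with $\|t-s\|^{2H}$ on a box around $t_0$, and read off both exponents. One step, however, is mis-stated. In your verification that $\alphainfbb_{\sifbm^H}(t_0)=H$ the two cases are interchanged: since the ratio $\EE[|\sifbm^H_t-\sifbm^H_s|^2]/\|t-s\|^{2\alpha}$ is comparable to $\|t-s\|^{2(H-\alpha)}$, for $\alpha<H$ it is the \emph{upper} bound that forces $\inf_{s,t\in B(t_0,\rho)}$ to tend to $0$ (take $s,t$ arbitrarily close), while for $\alpha>H$ it is the \emph{lower} bound that forces it to tend to $+\infty$ (the infimum is governed by the largest separation $\approx 2\rho$, and $(2\rho)^{2(H-\alpha)}\to+\infty$) --- not the other way round as you wrote. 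Both of your claims are false as stated, although the conclusion $\alphainfbb_{\sifbm^H}(t_0)=H$ they are meant to support is correct. Note that the paper sidesteps this entire verification: it proves only $\alphalocbb_{\sifbm^H}(t_0)\geq H$ (from the upper bound) and $\alphainfbb_{\sifbm^H}(t_0)\leq H$ (from the lower bound) and then squeezes both exponents to $H$ using the general ordering (\ref{ineqexp}), $\alphalocbb_X(t_0)\leq\alphainfbb_X(t_0)$; adopting that shortcut would remove the step on which you slipped. Finally, your caveat about points $t_0$ with a vanishing coordinate is well taken --- Lemma \ref{lemdist} requires $a\succ 0$ and its lower constant degenerates near the coordinate hyperplanes, an issue the paper passes over silently --- but your assertion that the exponents are unchanged there would need an actual argument, since the lower bound on $m([0,s]\bigtriangleup[0,t])$ genuinely fails at such points and only the inequality $\alphalocbb_{\sifbm^H}(t_0)\geq H$ clearly survives.
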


\begin{proof}
We prove that $\alphalocbb_X(t_0)\geq H$ and $\alphainfbb_X(t_0)\leq H$.
The result will follow from $\alphalocbb_X(t_0)\leq\alphainfbb_X(t_0)$.

Since for all $s,t\in \R^N_+$,
\begin{align*}
\frac{\EE\left[|\sifbm^H_t - \sifbm^H_t|^2\right]}{\|t-s\|^{2H}} = 
\left( \frac{m([0,s] \bigtriangleup [0,t])}{d_2(s,t)} \right)^{2H},
\end{align*}
Lemma \ref{lemdist} implies that for all $s,t$ in any interval $[a,b]$,
\begin{align}\label{ineqMpfBm}
M_1 \left( \frac{d_1(s,t)}{d_2(s,t)} \right)^{2H} \leq
\frac{\EE\left[|\sifbm^H_t - \sifbm^H_t|^2\right]}{\|t-s\|^{2H}} 
\leq M_2 \left( \frac{d_{\infty}(s,t)}{d_2(s,t)} \right)^{2H},
\end{align}
for some positive constants $M_1$ and $M_2$.

\noindent Since the distances $d_1, d_2$ and $d_{\infty}$ are equivalent, the inequality (\ref{ineqMpfBm}) implies that the quantity $\EE\left[|\sifbm^H_t - \sifbm^H_t|^2\right]/\|t-s\|^{2H}$ is bounded on any interval $[a,b]$. 
Consequently, for all $t_0\in\R^N_+$, $\alphalocbb_X(t_0)\geq H$ and 
$\alphainfbb_X(t_0)\leq H$, by definition of the deterministic local H\"older exponent and the deterministic local sub-exponent.
\end{proof}

\

A direct consequence from Lemma \ref{lemMpfBmexp} is the local regularity of the sample paths of the multiparameter fractional Brownian motion. In \cite{2ml}, Corollary 3.15 states that for any Gaussian process $X$ such that the function $t\mapsto\alphalocbb_X(t)$ is continuous and positive, the local H\"older exponents satisfy with probability one: $\widetilde{\alphar}_X(t)=\alphalocbb_X(t)$ for all $t\in\R_+^N$. Since the deterministic local H\"older exponents of the MpfBm are constant and positive, the following result comes directly.

\begin{corollary}
The local H\"older exponent of the multiparameter fractional Brownian motion $\sifbm^H=\{ \sifbm^H_t;\; t\in\R_+^N\}$ (with $1<H\leq 1/2$) satisfies with probability one, $\widetilde{\alphar}_{\sifbm^H}(t_0)=H$ for all $t_0\in\R^N_+$.
\end{corollary}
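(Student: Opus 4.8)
The plan is to derive this corollary as a direct application of the two-microlocal machinery from \cite{2ml} combined with Lemma~\ref{lemMpfBmexp}. The key observation is that the corollary follows from a general principle already quoted in the excerpt: for any Gaussian process $X$ whose deterministic local H\"older exponent function $t\mapsto\alphalocbb_X(t)$ is continuous and positive, one has $\widetilde{\alphar}_X(t)=\alphalocbb_X(t)$ for all $t\in\R_+^N$ with probability one (this is Corollary~3.15 of \cite{2ml}). So the entire task reduces to verifying that the multiparameter fractional Brownian motion $\sifbm^H$ meets the hypotheses of that statement.

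First I would invoke Lemma~\ref{lemMpfBmexp}, which gives $\alphalocbb_{\sifbm^H}(t_0)=\alphainfbb_{\sifbm^H}(t_0)=H$ for every $t_0\in\R_+^N$. In particular the function $t_0\mapsto\alphalocbb_{\sifbm^H}(t_0)$ is the constant $H$, hence trivially continuous, and since $H\in(0,1/2]$ it is strictly positive. Thus both requirements of \cite{2ml}, Corollary~3.15 are satisfied: positivity and continuity of the deterministic local H\"older exponent. Applying that corollary immediately yields: with probability one, $\widetilde{\alphar}_{\sifbm^H}(t_0)=\alphalocbb_{\sifbm^H}(t_0)$ for all $t_0\in\R_+^N$, and the right-hand side equals $H$ by Lemma~\ref{lemMpfBmexp}. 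This gives exactly the claimed conclusion $\widetilde{\alphar}_{\sifbm^H}(t_0)=H$ simultaneously for all $t_0$, almost surely.

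There is no serious obstacle here: the corollary is essentially a bookkeeping step that packages Lemma~\ref{lemMpfBmexp} with an external result. The only point requiring a moment's care is the order of quantifiers --- one wants the exceptional null set to be independent of $t_0$, i.e.\ the \emph{uniform} a.s.\ statement rather than the pointwise one. This is precisely why one appeals to Corollary~3.15 of \cite{2ml} (which provides the uniform-in-$t_0$ conclusion under the continuity hypothesis) rather than to the weaker pointwise identity $\widetilde{\alphar}_X(t_0)=\alphalocbb_X(t_0)$ a.s.\ used earlier in the excerpt. Since the exponent function is constant, the continuity hypothesis is automatic, so the uniform version applies without any further work, and the proof is complete in two lines.
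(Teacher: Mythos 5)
Your proof is correct and follows exactly the paper's own route: the paper likewise combines Lemma~\ref{lemMpfBmexp} (which gives $\alphalocbb_{\sifbm^H}(t_0)=H$ for all $t_0$, a constant hence continuous and positive function) with Corollary~3.15 of the cited two-microlocal paper to obtain the uniform almost sure identity $\widetilde{\alphar}_{\sifbm^H}(t_0)=H$. Your remark about needing the uniform-in-$t_0$ version rather than the pointwise one is precisely the point the paper is implicitly relying on.
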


\medskip

As an application of Theorem \ref{thmaincompact}, the property of constant local regularity of the multiparameter fractional Brownian motion yields to sharp results about the Hausdorff dimensions of its graph and its range. 

\begin{proposition}\label{prop:mpfbm}
Let $X=\{ X_t;\; t\in\R_+^N\}$ be a multiparameter fractional Brownian field with index $H\in (0, 1/2]$, i.e. whose coordinate processes $X^{(1)},\dots,X^{(d)}$ are i.i.d. multiparameter fractional Brownian motions with index $H$. \\
With probability one, the Hausdorff dimensions of the graph and the range of the sample paths of $X$ are
\begin{align*}
\forall I=(a,b)\subset\R_+^N,\quad
\dimH(\gr_{X}(I)) &= \min\{ N / H; N + d(1 - H) \}, \\
\dimH(\rg_{X}(I)) &= \min\{ N / H; d \}.
\end{align*}
\end{proposition}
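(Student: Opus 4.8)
The plan is to apply Theorem~\ref{thmaincompact} directly, once we have identified the relevant local exponents of the multiparameter fractional Brownian field $X$. By Lemma~\ref{lemMpfBmexp}, each coordinate process $X^{(i)}$ is a multiparameter fractional Brownian motion of index $H$, and its deterministic local H\"older exponent and deterministic local sub-exponent satisfy $\alphalocbb_{X^{(i)}}(t_0)=\alphainfbb_{X^{(i)}}(t_0)=H$ for every $t_0\in\R^N_+$. Consequently, for any open interval $I=(a,b)\subset\R^N_+$ we have $\alphalocbb=\inf_{t\in I}\alphalocbb_{X^{(i)}}(t)=H$ and $\alphainfbb=\inf_{t\in I}\alphainfbb_{X^{(i)}}(t)=H$, so in particular $0<\alphalocbb\leq\alphainfbb$ and the hypotheses of Theorem~\ref{thmaincompact} are met.

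Then the upper and lower bounds of Theorem~\ref{thmaincompact} coincide, because $\alphalocbb=\alphainfbb=H$. For the graph, the upper bound reads $\min\{N/H;N+d(1-H)\}$ and the lower bound reads $N/H$ if $N\leq dH$ and $N+d(1-H)$ if $N>dH$; since $N\leq dH$ is exactly the condition under which $N/H=\min\{N/H;N+d(1-H)\}$, the two bounds agree and we get $\dimH(\gr_X(I))=\min\{N/H;N+d(1-H)\}$ with probability one. For the range, the upper bound is $\min\{N/H;d\}$ and the lower bound is $N/H$ if $N\leq dH$ and $d$ if $N>dH$; again $N\leq dH$ is precisely the case where $N/H\leq d$, so the bounds match and $\dimH(\rg_X(I))=\min\{N/H;d\}$ almost surely.

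A small technical point to address is the order of quantifiers: Theorem~\ref{thmaincompact} gives, for each fixed open interval $I$, an almost sure statement, whereas the proposition asserts that with probability one the equalities hold simultaneously for all intervals $I=(a,b)\subset\R^N_+$. This is handled by first restricting to rational endpoints $a\prec b$ in $\Q^N_+$, which form a countable family, so the intersection of the corresponding probability-one events still has probability one; then for an arbitrary open interval $I$ one sandwiches it between rational subintervals and uses the monotonicity of $\dimH(\gr_{X_\bullet(\omega)}(\cdot))$ and $\dimH(\rg_{X_\bullet(\omega)}(\cdot))$ together with the fact that the common value $\min\{N/H;N+d(1-H)\}$ (resp.\ $\min\{N/H;d\}$) does not depend on $I$, so the squeeze is immediate.

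I do not expect any genuine obstacle here: the entire content of the proposition has been front-loaded into Lemma~\ref{lemMpfBmexp} (the computation of the exponents, which itself rested on the two-sided estimate of Lemma~\ref{lemdist} for $m([0,s]\bigtriangleup[0,t])$) and into the generic machinery of Theorem~\ref{thmaincompact}. The only thing requiring care is the elementary verification that, when $\alphalocbb=\alphainfbb$, the $\min$ in the upper bound is realised by the same branch as the case distinction in the lower bound; this is just the observation that $N/H\leq N+d(1-H)$ if and only if $N\leq dH$, and likewise $N/H\leq d$ if and only if $N\leq dH$.
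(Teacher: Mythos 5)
Your proposal is correct and follows exactly the route the paper intends: invoke Lemma~\ref{lemMpfBmexp} to get $\alphalocbb_{X^{(i)}}(t_0)=\alphainfbb_{X^{(i)}}(t_0)=H$, feed this into Theorem~\ref{thmaincompact}, and observe that since $N/H\leq N+d(1-H)$ (resp.\ $N/H\leq d$) exactly when $N\leq dH$, the upper and lower bounds coincide. Your extra care about the quantifier order (rational endpoints plus monotonicity of the dimension) is a sound way to upgrade the per-interval almost sure statement to a simultaneous one, a point the paper leaves implicit.
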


\medskip

\begin{corollary}\label{cor:mpfbm}
Let $\sifbm^H=\{ \sifbm^H_t;\; t\in\R_+^N\}$ be a multiparameter fractional Brownian motion with index $H\in (0, 1/2]$. 
With probability one, the Hausdorff dimensions of the graph and the range of the sample paths of $\sifbm^H$ are
\begin{align*}
\forall I=(a,b)\subset\R_+^N,\quad
\dimH(\gr_{\sifbm^H}(I)) &= N + 1 - H, \\
\dimH(\rg_{\sifbm^H}(I)) &= 1.
\end{align*}
\end{corollary}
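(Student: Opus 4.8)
The plan is to derive Corollary~\ref{cor:mpfbm} as the special case $d=1$ of Proposition~\ref{prop:mpfbm}, which itself is an application of Theorem~\ref{thmaincompact} combined with the computation of the exponents in Lemma~\ref{lemMpfBmexp}. First I would recall that by Lemma~\ref{lemMpfBmexp}, for the multiparameter fractional Brownian motion $\sifbm^H$ one has $\alphalocbb_{\sifbm^H}(t_0)=\alphainfbb_{\sifbm^H}(t_0)=H$ for every $t_0\in\R_+^N$; consequently, for any open interval $I=(a,b)\subset\R_+^N$ the quantities $\alphainfbb=\inf_{t\in I}\alphainfbb_{\sifbm^H}(t)$ and $\alphalocbb=\inf_{t\in I}\alphalocbb_{\sifbm^H}(t)$ both equal $H$, so the hypothesis $0<\alphalocbb\leq\alphainfbb$ of Theorem~\ref{thmaincompact} is satisfied.

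Next I would apply Theorem~\ref{thmaincompact} with $\alphainfbb=\alphalocbb=H$. Since the upper and lower bounds for $\dimH(\gr_X(I))$ then coincide — the lower bound being $N/H$ if $N\leq dH$ and $N+d(1-H)$ if $N>dH$, which is exactly $\min\{N/H;\,N+d(1-H)\}$, matching the upper bound — we obtain almost surely $\dimH(\gr_X(I))=\min\{N/H;\,N+d(1-H)\}$; similarly the bounds for the range squeeze to $\dimH(\rg_X(I))=\min\{N/H;\,d\}$. This proves Proposition~\ref{prop:mpfbm}. A minor point to be careful about is the quantifier order: Theorem~\ref{thmaincompact} is stated for a fixed open interval $I$, so to get the statement ``with probability one, for all $I=(a,b)$'' one should invoke it along a countable family of rational intervals $(a,b)$ with $a,b\in\Q^N_+$ and then use monotonicity of the Hausdorff dimension under inclusion to pass to arbitrary open intervals; this is the same device already used in the proof of Corollary~\ref{cormindimHunif2}.

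Finally, for Corollary~\ref{cor:mpfbm} itself I would simply specialize Proposition~\ref{prop:mpfbm} to $d=1$ (a single multiparameter fractional Brownian motion, rather than a $d$-dimensional vector of i.i.d.\ copies). With $d=1$ and $H\in(0,1/2]$ we always have $N\geq 1\geq H$, hence $N>dH=H$, so $\min\{N/H;\,N+d(1-H)\}=N+1-H$ and $\min\{N/H;\,d\}=\min\{N/H;1\}=1$. This yields $\dimH(\gr_{\sifbm^H}(I))=N+1-H$ and $\dimH(\rg_{\sifbm^H}(I))=1$ almost surely for every open interval $I$.

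I do not expect any serious obstacle here: the entire content is the bookkeeping of plugging $\alphalocbb=\alphainfbb=H$ into Theorem~\ref{thmaincompact} and checking that the lower and upper bounds match. The only thing requiring a moment's care is the passage from ``fixed $I$, a.s.'' to ``a.s., for all $I$'' via a countable-dense family of intervals and monotonicity, and verifying the inequality $N>H$ that makes the $\min$ in the range case collapse to $1$ when $d=1$.
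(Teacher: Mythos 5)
Your proposal is correct and follows exactly the route the paper intends: Lemma~\ref{lemMpfBmexp} gives $\alphalocbb_{\sifbm^H}(t_0)=\alphainfbb_{\sifbm^H}(t_0)=H$, Theorem~\ref{thmaincompact} then makes the upper and lower bounds coincide to yield Proposition~\ref{prop:mpfbm}, and the case $d=1$ with $H\le 1/2<1\le N$ collapses the minima to $N+1-H$ and $1$. Your extra remark about passing from a fixed interval $I$ to ``a.s.\ for all $I$'' via rational intervals and monotonicity is a sensible point of care, consistent with the device already used in the proof of Corollary~\ref{cormindimHunif2}.
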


\medskip

Proposition \ref{prop:mpfbm} and Corollary \ref{cor:mpfbm} should be compared to Theorem 1.3 of \cite{AyXiao} which states the Hausdorff dimensions of the range and the graph of the fractional Brownian sheet (result extended by Proposition 1 and Theorem 3 of \cite{WuXiao}). In particular, the Hausdorff dimensions of the sample path (range and graph) of the multiparameter fractional Brownian motion are equal to the respective quantities for the fractional Brownian sheet, when the Hurst index is the same along each axis.

\medskip

\subsection{Irregular Multifractional Brownian motion}\label{sec:mbm}

The multifractional Brownian motion (mBm) is an extension of the fractional Brownian motion, where the self-similarity index $H\in (0,1)$ is substituted with a function $H:\R_+\rightarrow (0,1)$ (see \cite{RPJLV} and \cite{BJR}). More precisely, it can be defined as a zero mean Gaussian process $\{X_t;\;t\in\R_+\}$ with
\begin{equation*}
X_t = \int_{-\infty}^t \left[(t-u)^{H(t)-1/2} - (-u)^{H(t)-1/2}\right].\mathbbm{W}(du)
+\int_0^t (t-u)^{H(t)-1/2}.\mathbbm{W}(du)
\end{equation*}
or
\begin{equation}\label{eq:mbm-harm}
X_t = \int_{\R} \frac{e^{it\xi}-1}{|\xi|^{H(t)+1/2}}.\widehat{\mathbbm{W}}(du),
\end{equation}
where $\mathbbm{W}$ is a Gaussian measure in $\R$ and $\widehat{\mathbbm{W}}$ is the Fourier transform of a Gaussian measure in $\C$.
The variety of the class of multifractional Brownian motions is described in \cite{StoevTaqqu}.

In the first definitions of the mBm, the different groups of authors used to consider the assumption: $H$ is a $\beta$-H\"older function and $H(t)<\beta$ for all $t\in\R_+$.
Under this so-called $(H_{\beta})$-assumption, the local regularity of the sample paths was described by
\begin{equation*}
\alphar_X(t_0) = \widetilde{\alphar}_X(t_0) = H(t_0) \qquad\textrm{a.s.}
\end{equation*}
where $\alphar_X(t_0)$ and $\widetilde{\alphar}_X(t_0)$ denote the pointwise and local H\"older exponents of $X$ at any $t_0\in\R_+$.
A localization of the Hausdorff dimension of the graph were also proved: For any $t_0\in\R_+$,
\begin{equation*}
\lim_{\rho\rightarrow 0}\dimH\left[ \gr_X\left( B(t_0,\rho) \right)\right] = 2-H(t_0)
\qquad\textrm{a.s.}
\end{equation*}
Let us notice that this result could not be a direct consequence of Adler's earlier work \cite{Adler77} since the multifractional Brownian motion does not have stationary increments, on the contrary to the classical fractional Brownian motion.

In \cite{EH06, 2ml}, the fine regularity of the multifractional Brownian motion has been studied in the irregular case, i.e. when the function $H$ is only assumed to be $\beta$-H\"older continuous with $\beta>0$. In this more general case, the pointwise and local H\"older exponents of $X$ at any $t_0\in\R_+$ satisfy respectively
\begin{align*}
\alphar_X(t_0) &= H(t_0) \wedge \alpha_H(t_0) \qquad\textrm{a.s.}\\
\widetilde{\alphar}_X(t_0) &= H(t_0) \wedge \widetilde{\alpha}_H(t_0) \qquad\textrm{a.s.},
\end{align*}
where
\begin{align*}
\alpha_H(t_0)&= \sup\left\{ \alpha>0: \limsup_{\rho\rightarrow 0}
\sup_{s,t\in B(t_0,\rho)} \frac{|H(t)-H(s)|}{\rho^{\alpha}} < +\infty \right\};\\
\widetilde{\alpha}_H(t_0)&= \sup\left\{ \alpha>0: \lim_{\rho\rightarrow 0}
\sup_{s,t\in B(t_0,\rho)} \frac{|H(t)-H(s)|}{|t-s|^{\alpha}} < +\infty \right\}.
\end{align*}

Roughtly speaking, when the function $H$ is irregular, it transmits its local regularity to the sample paths of the mBm.
But in that case, nothing is known about the Hausdorff dimension of the range or the graph of the process.

In this section, the main results of the paper stated in Section \ref{sec:main} are applied to derive informations on these Hausdorff dimensions, without any regularity assumptions on the function $H$.
As for Gaussian processes, we define the {\em local sub-exponent} of $H$ at $t_0\in\R_+$ by
\begin{align*}
\underline{\alpha}_H(t_0)&= \inf\left\{ \alpha>0 :
\lim_{\rho\rightarrow 0} \inf_{s,t\in B(t_0,\rho)} \frac{|H(t)-H(s)|}{|t-s|^{\alpha}}
=+\infty \right\} \\
&= \sup\left\{ \alpha>0 :
\lim_{\rho\rightarrow 0} \inf_{s,t\in B(t_0,\rho)} \frac{|H(t)-H(s)|}{|t-s|^{\alpha}}
=0 \right\}. 
\end{align*}

\medskip

\begin{proposition}\label{prop:mbm}
Let $X=\{X_t;\; t\in\R_+\}$ be the multifractional Brownian motion of integral representation (\ref{eq:mbm-harm}), with regularity function $H:\R_+\rightarrow (0,1)$ assumed to be $\beta$-H\"older-continuous with $\beta>0$. 
Let $\widetilde{\alpha}_H(t_0)$ and $\underline{\alpha}_H(t_0)$ be respectively the local H\"older exponent and sub-exponent of $H$ at $t_0\in\R_+$.

\medskip

In the three following cases, the Hausdorff dimension of the graph of the sample path of $X$ satisfies:
\begin{enumerate}[(i)]
\item If $H(t_0) < \widetilde{\alpha}_H(t_0) \leq \underline{\alpha}_H(t_0)$ for $t_0\in\R_+$, then
\begin{align*}
\lim_{\rho\rightarrow 0}\dimH(\gr_X(B(t_0,\rho))) = 2-H(t_0)  \qquad\textrm{a.s.}
\end{align*}

\item If $\widetilde{\alpha}_H(t_0) < H(t_0) \leq \underline{\alpha}_H(t_0)$ for $t_0\in\R_+$, then
\begin{align*}
2-H(t_0) \leq \lim_{\rho\rightarrow 0}\dimH(\gr_X(B(t_0,\rho))) &\leq 2-\widetilde{\alpha}_H(t_0) \qquad\textrm{a.s.}
\end{align*}

\item If $\widetilde{\alpha}_H(t_0) \leq \underline{\alpha}_H(t_0) < H(t_0)$ for $t_0\in\R_+$, then
\begin{align*}
2-\underline{\alpha}_H(t_0) \leq \lim_{\rho\rightarrow 0}\dimH(\gr_X(B(t_0,\rho))) &\leq 2-\widetilde{\alpha}_H(t_0) \qquad\textrm{a.s.} 
\end{align*}

\end{enumerate}

With probability one, the Hausdorff dimension of the range of the sample path of $X$ satisfies:
\begin{align*}
\forall t_0\in\R_+,\quad
\lim_{\rho\rightarrow 0}\dimH(\rg_X(B(t_0,\rho))) = 1.
\end{align*}

Moreover if the $(H_{\beta})$-assumption holds then, with probability one,  
\begin{align*}
\forall t_0\in\R_+,\quad
\lim_{\rho\rightarrow 0}\dimH(\gr_X(B(t_0,\rho))) = 2-H(t_0).
\end{align*}

\end{proposition}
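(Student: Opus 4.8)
The plan is to compute the deterministic local H\"older exponent $\alphalocbb_X(t_0)$ and the deterministic local sub-exponent $\alphainfbb_X(t_0)$ of the mBm $X$ at each $t_0$, and then feed these into the pointwise almost sure result (Theorem~\ref{thmain}) with $N=d=1$. First I would recall the known estimate on the incremental variance of the mBm: from the harmonizable representation (\ref{eq:mbm-harm}), one has
\begin{equation*}
\EE[|X_t-X_s|^2] = C(s,t)\,|t-s|^{2H(s)} + D(s,t)\,|H(t)-H(s)|^2,
\end{equation*}
where $C(s,t)$ stays bounded between two positive constants near $t_0$ and $D(s,t)$ is bounded above (this is the standard decomposition used in \cite{EH06, 2ml}, obtained by splitting the exponent $H(t)$ into $H(s)$ plus the increment $H(t)-H(s)$ and controlling the resulting terms). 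From this two-sided control, the upper bound $\EE[|X_t-X_s|^2]\le |t-s|^{2\alpha}$ holds (for small $|t-s|$) exactly when $\alpha < H(t_0)\wedge\widetilde\alpha_H(t_0)$, giving $\alphalocbb_X(t_0) = H(t_0)\wedge\widetilde\alpha_H(t_0)$; and the lower bound $\EE[|X_t-X_s|^2]\ge |t-s|^{2\alpha}$ fails to go to $0$ precisely when $\alpha \le H(t_0)\wedge\underline\alpha_H(t_0)$, because the term $C(s,t)|t-s|^{2H(s)}$ already forces a lower bound of order $|t-s|^{2H(t_0)+\epsilon}$ while the $H$-increment term can be as small as $|t-s|^{2\underline\alpha_H(t_0)+\epsilon}$. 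Hence $\alphainfbb_X(t_0) = H(t_0)\wedge\underline\alpha_H(t_0)$.

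Next I would simply substitute these exponents into Theorem~\ref{thmain} with $N=d=1$. For the graph: the lower bound is $1/\alphainfbb_X(t_0)$ if $1\le\alphainfbb_X(t_0)$ (which never happens here since the exponents are $<1$) and $1 + (1-\alphainfbb_X(t_0)) = 2-\alphainfbb_X(t_0)$ otherwise; the upper bound is $\min\{1/\alphalocbb_X(t_0),\,2-\alphalocbb_X(t_0)\} = 2-\alphalocbb_X(t_0)$ since $\alphalocbb_X(t_0)<1$. So almost surely
\begin{equation*}
2-\alphainfbb_X(t_0) \le \lim_{\rho\to0}\dimH(\gr_X(B(t_0,\rho))) \le 2-\alphalocbb_X(t_0).
\end{equation*}
Case~(i) is when $H(t_0)<\widetilde\alpha_H(t_0)\le\underline\alpha_H(t_0)$, so $\alphalocbb_X(t_0)=\alphainfbb_X(t_0)=H(t_0)$ and both bounds collapse to $2-H(t_0)$. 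Case~(ii) is when $\widetilde\alpha_H(t_0)<H(t_0)\le\underline\alpha_H(t_0)$, so $\alphalocbb_X(t_0)=\widetilde\alpha_H(t_0)$ and $\alphainfbb_X(t_0)=H(t_0)$, yielding $2-H(t_0)\le\cdots\le 2-\widetilde\alpha_H(t_0)$. Case~(iii) is when $\widetilde\alpha_H(t_0)\le\underline\alpha_H(t_0)<H(t_0)$, so $\alphalocbb_X(t_0)=\widetilde\alpha_H(t_0)$, $\alphainfbb_X(t_0)=\underline\alpha_H(t_0)$, giving $2-\underline\alpha_H(t_0)\le\cdots\le 2-\widetilde\alpha_H(t_0)$. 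For the range, $N=d=1$ and $\alphainfbb_X(t_0)<1$ put us in the regime $N>d\,\alphainfbb_X(t_0)$, where both the lower and upper bounds equal $d=1$, so $\lim_{\rho\to0}\dimH(\rg_X(B(t_0,\rho)))=1$ almost surely. A null set is discarded for each fixed $t_0$; since the statement is pointwise this is harmless. For the final claim under the $(H_\beta)$-assumption, one has $\widetilde\alpha_H(t_0)\ge\beta>H(t_0)$ for all $t_0$, so we are always in case~(i) and $\lim_{\rho\to0}\dimH(\gr_X(B(t_0,\rho)))=2-H(t_0)$; the uniformity in $t_0$ follows from Theorem~\ref{thmainunif} applied with $\liminf_{u\to t_0}\alphalocbb_X(u)=\liminf_{u\to t_0}\alphainfbb_X(u)=H(t_0)$ (using continuity of $H$).

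The main obstacle is the precise identification of $\alphainfbb_X(t_0)$, i.e.\ justifying that the lower bound on $\EE[|X_t-X_s|^2]$ is genuinely governed by $\min(2H(t_0),2\underline\alpha_H(t_0))$ rather than something smaller. The subtlety is that the decomposition above writes the variance as a \emph{sum} of a term of size $\asymp|t-s|^{2H(s)}$ and a term of size $\le\mathrm{const}\cdot|H(t)-H(s)|^2$, and a sum is bounded below by each summand, so $\EE[|X_t-X_s|^2]\gtrsim|t-s|^{2H(s)}$ already gives $\alphainfbb_X(t_0)\le H(t_0)$; combined with the $H$-increment contribution one also needs the lower bound $\EE[|X_t-X_s|^2]\gtrsim|H(t)-H(s)|^2$ along sequences realizing the sub-exponent of $H$, which requires checking that the cross term in $\EE[|X_t-X_s|^2]$ does not destroy this — this is exactly the delicate covariance estimate carried out in \cite{EH06} and \cite{2ml}, which I would invoke rather than reprove. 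Everything else is bookkeeping: matching the various exponents to the cases of Theorem~\ref{thmain} and noting that all exponents involved are strictly less than $1=N=d$ so that the ``min'' in the upper bounds and the case split in the lower bounds always resolve in the stated direction.
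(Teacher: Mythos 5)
Your proposal follows essentially the same route as the paper: quote the two-term asymptotics of $\EE[|X_t-X_s|^2]$ from \cite{EH06}, identify $\alphalocbb_X(t_0)=H(t_0)\wedge\widetilde{\alpha}_H(t_0)$ and $\alphainfbb_X(t_0)=H(t_0)\wedge\underline{\alpha}_H(t_0)$, and substitute into Theorem~\ref{thmain} with $N=d=1$; your case analysis and the resulting exponent values agree with the paper's in all three cases. The one point to correct is the range claim: the proposition asserts a single full-measure event on which $\lim_{\rho\rightarrow 0}\dimH(\rg_X(B(t_0,\rho)))=1$ for \emph{all} $t_0$ simultaneously, so you cannot discard a null set per fixed $t_0$ as you state; as the paper does, invoke the uniform Theorem~\ref{thmainunif} with $\mathcal{A}=\R_+$ (valid because $\alphalocbb_X(u)\geq H(u)\wedge\widetilde{\alpha}_H(u)\geq H(u)\wedge\beta>0$ and $H$ is continuous), observing that $\alphainfbb_X(u)<1$ everywhere places you in the regime $N>d\,\liminf_{u\rightarrow t_0}\alphainfbb_X(u)$.
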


\begin{proof}

In \cite{EH06}, an asymptotic behaviour of the incremental variance of the multifractional Brownian motion, in a neighborhood $B(t_0,\rho)$ of any $t_0\in\R_+$ as $\rho$ goes to $0$, is given by: $\forall s,t\in B(t_0,\rho)$,
\begin{align}\label{eq:asympMBM}
\EE[|X_t-X_s|^2] \sim K(t_0)\ |t-s|^{H(t)+H(s)} + L(t_0)\ [H(t)-H(s)]^2,
\end{align}
where $K(t_0)$ and $L(t_0)$ are positive constants.\\
From (\ref{eq:asympMBM}), for any $t_0\in\R_+$, for all $\alpha>0$ and for all $s,t\in B(t_0,\rho)$,
\begin{align}\label{eq:asympMBM2}
\frac{\EE[|X_t-X_s|^2]}{|t-s|^{2\alpha}} \sim K(t_0)\ |t-s|^{H(t)+H(s)-\alpha} + L(t_0)\ \left[\frac{H(t)-H(s)}{|t-s|^{\alpha}}\right]^2,
\end{align}
when $\rho\rightarrow 0$.
This expression allows to evaluate the exponents $\alphalocbb_X(t_0)$ (and consequently $\widetilde{\alphar}_X(t_0)$) and $\alphainfbb_X(t_0)$, in function of the respective exponents of the function $H$.

\medskip

The local behaviour of $H$ around $t_0$ is described by one of the two following situations:
\begin{itemize}
\item Either there exists $\rho>0$ such that the restriction $\left.H\right|_{B(t_0,\rho)}$ is increasing or decreasing.
In that case, $\underline{\alpha}_H(t_0)\in\R_+\cup\{+\infty\}$.

\item Or for all $\rho>0$, there exist $s,t\in B(t_0,\rho)$ such that $H(t)=H(s)$.\\
In that case, for all $\alpha>0$ and for all $\rho>0$, $\ds\inf_{s,t\in B(t_0,\rho)} \frac{|H(t)-H(s)|}{|t-s|^{\alpha}}=0$ and therefore, $\underline{\alpha}_H(t_0)=+\infty$.

\end{itemize}

Since $\widetilde{\alpha}_H(t_0) \leq \underline{\alpha}_H(t_0)$ for all $t_0\in\R_+$ as noticed in Section~\ref{sec:subexp}, we distinguish the three following cases:

\begin{enumerate}[(i)]
\item If $H(t_0) < \widetilde{\alpha}_H(t_0) \leq \underline{\alpha}_H(t_0)$ for some $t_0\in\R_+$, then for all $0<\epsilon<\widetilde{\alpha}_H(t_0) - H(t_0)$, there exists $\rho_0>0$ such that
$$ \forall t\in B(t_0,\rho_0),\quad H(t_0)-\epsilon < H(t) < H(t_0)+\epsilon,$$
and thus
\begin{equation}\label{eq:mbm-H}
\forall s,t\in B(t_0,\rho_0),\quad |t-s|^{2H(t_0)+2\epsilon} \leq |t-s|^{H(s)+H(t)} \leq |t-s|^{2H(t_0)-2\epsilon}. 
\end{equation}
Then, expression (\ref{eq:asympMBM2}) implies $H(t_0)-\epsilon\leq\widetilde{\bbalpha}_X(t_0)$ and $\underline{\bbalpha}_X(t_0) \leq H(t_0)+\epsilon$, by definition of the exponents. Letting $\epsilon$ tend to $0$, and using $\widetilde{\bbalpha}_X(t_0) \leq \underline{\bbalpha}_X(t_0)$, we get $\widetilde{\bbalpha}_X(t_0) = \underline{\bbalpha}_X(t_0) = H(t_0)$.

\noindent Then, Theorem \ref{thmain} (with $N>d\ \underline{\bbalpha}_X(t_0)$)  implies: 
\begin{align*}
\lim_{\rho\rightarrow 0}\dimH(\gr_X(B(t_0,\rho))) = 2-H(t_0) \qquad\textrm{a.s.}
\end{align*}

\item If $\widetilde{\alpha}_H(t_0) < H(t_0) \leq \underline{\alpha}_H(t_0)$ for some $t_0\in\R_+$, then as previously, we consider any $0<\epsilon<H(t_0)-\widetilde{\alpha}_H(t_0)$ and we show that expression (\ref{eq:asympMBM2}) and inequalities (\ref{eq:mbm-H}) imply $\widetilde{\bbalpha}_X(t_0) = \widetilde{\alpha}_H(t_0)$ and $\underline{\bbalpha}_X(t_0) = H(t_0)$.
Theorem \ref{thmain} (with $N>d\ \underline{\bbalpha}_X(t_0)$)  implies: 
\begin{align*}
2-H(t_0) \leq \lim_{\rho\rightarrow 0}\dimH(\gr_X(B(t_0,\rho))) \leq 2-\widetilde{\alpha}_H(t_0) \qquad\textrm{a.s.}
\end{align*}

\item If $\widetilde{\alpha}_H(t_0) \leq \underline{\alpha}_H(t_0) < H(t_0)$ for some $t_0\in\R_+$, then as previously, we consider any $0<\epsilon<H(t_0)-\underline{\alpha}_H(t_0)$ and we show that expression (\ref{eq:asympMBM2}) and inequalities (\ref{eq:mbm-H}) imply $\widetilde{\bbalpha}_X(t_0) = \widetilde{\alpha}_H(t_0)$ and $\underline{\bbalpha}_X(t_0) = \underline{\alpha}_H(t_0)$.
Theorem \ref{thmain} (with $N>d\ \underline{\bbalpha}_X(t_0)$)  implies: 
\begin{align*}
2-\underline{\alpha}_H(t_0) \leq \lim_{\rho\rightarrow 0}\dimH(\gr_X(B(t_0,\rho))) \leq 2-\widetilde{\alpha}_H(t_0) \qquad\textrm{a.s.}
\end{align*}

\end{enumerate}

Since $H$ is $\beta$-H\"older-continuous with $\beta>0$, Theorem \ref{thmainunif} can be applied with $\mathcal{A}=\R_+$. 
In the three previous case, we observe that $\underline{\bbalpha}_X(u) < 1$  for all $u\in\R_+$. Consequently, $N>d\ \liminf_{u\rightarrow t_0}\underline{\bbalpha}_X(u)$ and, with probability one,
\begin{align*}
\forall t_0\in\R_+,\quad
\lim_{\rho\rightarrow 0}\dimH(\rg_X(B(t_0,\rho))) = 1.
\end{align*}

When the $(H_{\beta})$-assumption holds, $\widetilde{\bbalpha}_X(t_0) = \widetilde{\alpha}_H(t_0)=\underline{\bbalpha}_X(t_0)$ for all $t_0\in\R_+$, and by continuity of $H$, 
$$ \liminf_{u\rightarrow t_0}\widetilde{\bbalpha}_X(u) =
\liminf_{u\rightarrow t_0}\underline{\bbalpha}_X(u) = H(t_0). $$
Then, Theorem \ref{thmainunif} implies:
With probability one,  
\begin{align*}
\forall t_0\in\R_+,\quad
\lim_{\rho\rightarrow 0}\dimH(\gr_X(B(t_0,\rho))) = 2-H(t_0).
\end{align*}

\end{proof}

According to Proposition \ref{prop:mbm}, the general theorems of Section \ref{sec:main} fail to derive sharp values for the Hausdorff dimensions of the sample paths of the multifractional Brownian motion when the $(H_{\beta})$-assumption for the function $H$ is not satisfied.
This is due to the fact that the irregularity of $H$ is not completely controlled by the exponents $\widetilde{\alpha}_H(t_0)$ and $\underline{\alpha}_H(t_0)$. A deeper analysis of the function $H$ is required in order to determine the exact Hausdorff dimensions of the mBm.

\medskip

\subsection{Generalized Weierstrass function}\label{sec:GW}

The local regularity of the Weierstrass function $W_H$, defined by
\begin{equation*}
t\mapsto W_H(t)=\sum_{j=1}^{\infty}\lambda^{-j H}\ \sin\lambda^{j}t,
\end{equation*}
where $\lambda \geq 2$ and $H\in (0,1)$, has been deeply studied in the literature (e.g. see \cite{falconer}). 
When $\lambda$ is large enough, the box-counting dimension of the graph of $W_H$ is known to be $2-H$.
Nevertheless the exact value of the Hausdorff dimension remains unknown at this stage.

Different stochastic versions of the Weierstrass function have been considered in \cite{AyLeVe, falconer, 2ml, hunt, liningPhD} and their geometric properties have been investigated.
In this section, we consider the {\em generalized Weierstrass function (GW)}, defined as the Gaussian process $X=\left\{X_t;\;t\in\mathbf{R}_{+}\right\}$,
\begin{equation}\label{def:Weierstrass}
\forall t\in\R_+,\quad
X_t=\sum_{j=1}^{\infty}Z_j\ \lambda^{-j H(t)}\ \sin(\lambda^{j}t + \theta_j)
\end{equation}
where 
\begin{itemize}
\item $\lambda \geq 2$,
\item $t\mapsto H(t)$ takes values in $(0,1)$,
\item $\left(Z_j\right)_{j\geq 1}$ is a sequence of $\mathcal{N}(0,1)$ i.i.d. random variables, 
\item and 
$\left(\theta_j\right)_{j\geq 1}$ is a sequence of uniformly distributed on $[0,2\pi)$ random variables independent of $\left(Z_j\right)_{j\geq 1}$. 
\end{itemize}

In the specific case of $\theta_j=0$ for all $j\geq 1$, Theorem 4.9 of \cite{2ml} determines the local regularity of the sample path of the GW through its $2$-microlocal frontier, when the function $H$ is $\beta$-H\"older continuous with $\beta>0$ and when the $(H_{\beta})$-assumption holds, i.e. $H(t)<\beta$ for all $t\in\R_+$. 
In particular, the deterministic local H\"older exponent is proved to be $\widetilde{\bbalpha}_X(t_0) = H(t_0)$ for all $t_0\in\R_+$ and the local H\"older exponent satisfies, with probability one,
\begin{align*}
\forall t_0\in\R_+,\quad
\widetilde{\alphar}_X(t_0) = H(t_0).
\end{align*}

Moreover, when $H$ is constant and $\theta_j=0$ for all $j\geq 1$, the Hausdorff dimension of the graph of the sample path of the GW is proved to be equal to $2-H$, as a particular case of Theorem 5.3.1 of \cite{liningPhD}. In the sequel, we use Theorem \ref{thmainunif} to extend this result when $H$ is no longer constant and the $\theta_j$'s are not equal to $0$. 

\medskip

The two following lemmas are the key results to determine the deterministic local H\"older exponent and sub-exponent of the GW, in the general case. Their proofs of are sketched in \cite{falconer} when $\left(\theta_j\right)_{j\geq 1}$ are independent and uniformly distributed on $[0,2\pi)$; for sake of completeness, we detail them in this section without requiring the independence of the $\theta_j$'s, 
before considering the case of a non-constant function $H$.

\medskip

\begin{lemma}\label{lem:GW-inc-var}
Let $\{X_t;\;t\in\R_+\}$ be the stochastic Weierstrass function defined by (\ref{def:Weierstrass}). 
Then, the incremental variance between $u,v\in\R_+$ is given by
\begin{align}\label{eq:GW-inc-var}
\EE[|X_u-X_v|^2] = 2\sum_{j\geq 1} \lambda^{-2j H(u)} 
\sin^2\left(\lambda^j \frac{u-v}{2}\right) 
+ \sum_{j\geq 1} \left( \lambda^{-j H(v)} - \lambda^{-j H(u)} \right)^2.
\end{align}

\end{lemma}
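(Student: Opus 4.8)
The plan is to expand the increment $X_u-X_v$ as a random series in the coefficients $Z_j$, exploit the orthogonality of those coefficients to collapse the double sum for the second moment into a single sum, and then evaluate each resulting term using only that $\theta_j$ has the uniform law on $[0,2\pi)$ --- never the joint distribution of the phases. First I would write
\begin{equation*}
X_u-X_v=\sum_{j\geq 1}Z_j\,c_j,\qquad
c_j:=\lambda^{-jH(u)}\sin(\lambda^j u+\theta_j)-\lambda^{-jH(v)}\sin(\lambda^j v+\theta_j),
\end{equation*}
and observe that $|c_j|\leq\lambda^{-jH(u)}+\lambda^{-jH(v)}$ decays geometrically in $j$ (because $\lambda\geq 2$ and $H(u),H(v)>0$), so the series converges in $L^2(\PP)$ and absolutely almost surely; this legitimates all the term-by-term manipulations below.

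Next I would compute, using that $L^2$ convergence to exchange expectation and summation,
\begin{equation*}
\EE\big[|X_u-X_v|^2\big]=\sum_{j,k\geq 1}\EE\big[Z_jZ_k\,c_jc_k\big]
=\sum_{j,k\geq 1}\EE[Z_jZ_k]\,\EE\big[c_jc_k\big]=\sum_{j\geq 1}\EE\big[c_j^2\big].
\end{equation*}
The middle equality uses the independence of $(Z_j)_{j\geq1}$ from $(\theta_j)_{j\geq1}$, and the last one uses $\EE[Z_jZ_k]=\delta_{jk}$. This is exactly the step that renders the joint law of the $\theta_j$'s irrelevant: the off-diagonal terms are annihilated by the coefficients $Z_j$, not by any decorrelation among the phases, so only the uniform marginal of each single $\theta_j$ will be needed afterwards.

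Then I would evaluate each $\EE[c_j^2]$. Expanding the square and integrating the single uniform variable $\theta_j$ over a full period yields the two elementary identities $\EE[\sin^2(x+\theta_j)]=\tfrac12$ and $\EE[\sin(x+\theta_j)\sin(y+\theta_j)]=\tfrac12\cos(x-y)$ (the frequency-$2\theta_j$ contributions averaging to zero). This gives $\EE[c_j^2]$ in closed form in terms of $\lambda^{-jH(u)}$, $\lambda^{-jH(v)}$ and $\cos(\lambda^j(u-v))$; a short trigonometric rearrangement --- substituting $1-\cos\varphi=2\sin^2(\varphi/2)$ with $\varphi=\lambda^j(u-v)$ and completing the square in the two powers of $\lambda$ --- then separates each term into an oscillating part carrying the factor $\sin^2(\lambda^j\tfrac{u-v}{2})$ and a ``drift'' part of the form $(\lambda^{-jH(v)}-\lambda^{-jH(u)})^2$ coming from the variation of $H$. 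Summing over $j$ produces the decomposition (\ref{eq:GW-inc-var}).

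There is no serious obstacle here; the only point deserving care is the interchange of $\EE$ with the infinite sum (and the companion use of Fubini), which follows immediately from the geometric bound on $|c_j|$. Everything else is a routine trigonometric computation. I would also stress that only the second moment of the $Z_j$ and the uniform marginal of the $\theta_j$ are used: neither the Gaussianity of the $Z_j$'s nor --- more to the point --- the independence of the $\theta_j$'s plays any role, which is the slight extra generality relative to the sketch in \cite{falconer}.
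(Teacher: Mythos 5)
Your strategy is the same as the paper's --- annihilate the off-diagonal terms with $\EE[Z_jZ_k]=\mathbbm{1}_{j=k}$ and then average the single uniform phase $\theta_j$ over a period --- and your way of organising the diagonal term (computing $\EE[c_j^2]$ directly from $\EE[\sin(x+\theta_j)\sin(y+\theta_j)]=\tfrac12\cos(x-y)$, rather than first splitting $c_j$ by a sum-to-product formula as the paper does) is the cleaner of the two. But you should carry your ``short trigonometric rearrangement'' to the end, because it does \emph{not} land on (\ref{eq:GW-inc-var}). Writing $A=\lambda^{-jH(u)}$, $B=\lambda^{-jH(v)}$, your identities give
\begin{equation*}
\EE[c_j^2]=\frac{A^2+B^2}{2}-AB\cos\big(\lambda^j(u-v)\big)
=\frac{(A-B)^2}{2}+2AB\,\sin^2\Big(\lambda^j\,\frac{u-v}{2}\Big),
\end{equation*}
so that summing over $j$ yields
\begin{equation*}
\EE[|X_u-X_v|^2]=2\sum_{j\geq1}\lambda^{-j(H(u)+H(v))}\sin^2\Big(\lambda^j\,\frac{u-v}{2}\Big)
+\frac{1}{2}\sum_{j\geq1}\big(\lambda^{-jH(v)}-\lambda^{-jH(u)}\big)^2,
\end{equation*}
that is, $\lambda^{-j(H(u)+H(v))}$ in place of $\lambda^{-2jH(u)}$ and an extra factor $\tfrac12$ on the second sum.

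The source of the mismatch is not in your computation but in the target identity: the paper reaches (\ref{eq:GW-inc-var}) by splitting $c_j$ into $2\lambda^{-jH(u)}\sin(\lambda^j\frac{u-v}{2})\cos(\lambda^j\frac{u+v}{2}+\theta_j)$ plus a drift term, and asserting that the resulting $\cos$--$\sin$ cross term vanishes ``by a parity argument''. In fact $\frac{1}{2\pi}\int_0^{2\pi}\cos(a+x)\sin(b+x)\,dx=\tfrac12\sin(b-a)$, which for $a=\lambda^j\frac{u+v}{2}$ and $b=\lambda^j v$ equals $-\tfrac12\sin(\lambda^j\frac{u-v}{2})\neq 0$; keeping this contribution converts $\lambda^{-2jH(u)}$ into $\lambda^{-j(H(u)+H(v))}$ and restores the factor $\tfrac12$, in agreement with your direct route. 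So do not force your algebra to reproduce (\ref{eq:GW-inc-var}); state the corrected right-hand side. The discrepancy is harmless downstream, since Lemma \ref{lem:GW-const} and Proposition \ref{prop:GW} only use two-sided bounds with an $\epsilon$ of slack in the exponent and $H(u)+H(v)$ is squeezed between $2H(t_0)\pm 2\epsilon$ near $t_0$ exactly as $2H(u)$ is. Everything else in your argument --- the geometric bound on $|c_j|$ justifying term-by-term expectation, and the observation that only $\EE[Z_jZ_k]=\mathbbm{1}_{j=k}$ and the uniform marginal of each single $\theta_j$ are used --- is correct and consistent with the paper.
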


\begin{proof}
For all $u,v\in\R_+$, we compute
\begin{align*}
X_u-X_v &= \sum_{j\geq 1} Z_j\ \lambda^{-j H(u)} \left[ \sin(\lambda^j u + \theta_j) - \sin(\lambda^j v + \theta_j) \right] \\
&\qquad\qquad + \sum_{j\geq 1} Z_j\ \left[ \lambda^{-j H(v)} - \lambda^{-j H(u)} \right] \sin(\lambda^j v + \theta_j) \\
&= 2 \sum_{j\geq 1} Z_j\ \lambda^{-j H(u)} \sin\left( \lambda^j \frac{u-v}{2} \right) \cos\left( \lambda^j \frac{u+v}{2} + \theta_j \right) \\
&\qquad\qquad + \sum_{j\geq 1} Z_j\ \left[ \lambda^{-j H(v)} - \lambda^{-j H(u)} \right] \sin(\lambda^j v + \theta_j).
\end{align*}

In the expression of $\EE[|X_u-X_v|^2]$, the three following terms appear:
\begin{itemize}
\item $\displaystyle\EE\left[ Z_j Z_k \cos\left( \lambda^j \frac{u+v}{2} + \theta_j \right) \cos\left( \lambda^k \frac{u+v}{2} + \theta_k \right) \right]$,
\item $\displaystyle\EE\left[ Z_j Z_k \sin\left( \lambda^j v + \theta_j \right) \sin\left( \lambda^k v + \theta_k \right) \right]$
\item and $\displaystyle\EE\left[ Z_j Z_k \cos\left( \lambda^j \frac{u+v}{2} + \theta_j \right) \sin\left( \lambda^k v + \theta_k \right) \right]$,
\end{itemize}
where $j,k \geq 1$.

The first two terms are treated in the same way. For the second one, we have
\begin{align*}
\EE\left[ Z_j Z_k \right. &\left.\sin\left( \lambda^j v + \theta_j \right) \sin\left( \lambda^k v + \theta_k \right) \right] \\
&= \EE\left( \EE\left[ Z_j Z_k \sin\left( \lambda^j v + \theta_j \right) \sin\left( \lambda^k v + \theta_k \right) \mid Z_j, Z_k\right] \right) \\
&= \EE[Z_j Z_k]\ \EE\left[ \sin\left( \lambda^j v + \theta_j \right) \sin\left( \lambda^k v + \theta_k \right) \right],
\end{align*}
using the independence of $(\theta_j, \theta_k)$ with $(Z_j, Z_k)$.
Then, since $\EE[Z_j Z_k] = \mathbbm{1}_{j=k}$ and 
\begin{align*}
\EE[\sin^2(\lambda^j v + \theta_j)] = \frac{1}{2\pi} \int_{[0,2\pi)} \sin^2(\lambda^j v + x)\ dx = \frac{1}{2},
\end{align*}
we get
\begin{align*}
\EE\left[ Z_j Z_k \sin\left( \lambda^j v + \theta_j \right) \sin\left( \lambda^k v + \theta_k \right) \right]
= \frac{1}{2}.\mathbbm{1}_{j=k}.
\end{align*}
In the same way, we prove that
\begin{align*}
\EE\left[ Z_j Z_k \cos\left( \lambda^j \frac{u+v}{2} + \theta_j \right) \cos\left( \lambda^k \frac{u+v}{2} + \theta_k \right) \right]
= \frac{1}{2}.\mathbbm{1}_{j=k}.
\end{align*}
For the third term, we compute as previously
\begin{align*}
\EE\bigg[ Z_j Z_k & \cos\left( \lambda^j \frac{u+v}{2} + \theta_j \right) \sin\left( \lambda^k v + \theta_k \right) \bigg] \\
&= \EE[Z_j Z_k] \ \EE\left[ \cos\left( \lambda^j \frac{u+v}{2} + \theta_j \right) \sin\left( \lambda^k v + \theta_k \right) \right] \\
&= \mathbbm{1}_{j=k} . \EE\left[ \cos\left( \lambda^j \frac{u+v}{2} + \theta_j \right) \sin\left( \lambda^j v + \theta_j \right) \right] \\
&= \mathbbm{1}_{j=k} . \frac{1}{2\pi}\int_{[0,2\pi)}  \cos\left( \lambda^j \frac{u+v}{2} + x \right) \sin\left( \lambda^j v + x \right) \ dx =0,
\end{align*}
by a parity argument.
The result follows.
\end{proof}

\begin{lemma}\label{lem:GW-const}
Let $\{X_t;\;t\in\R_+\}$ be the stochastic Weierstrass function defined by (\ref{def:Weierstrass}), where the function $H$ is assumed to be constant.

Then, for all compact subset $I\subset\R_+$, there exists two constants $C_1>0$ and $C_2>0$ such that for all $u,v\in I$,
\begin{align}\label{eq:GW-const}
0 < C_1 \leq \frac{\EE[|X_u-X_v|^2]}{|u-v|^{2H}} \leq C_2 < +\infty.
\end{align}

\end{lemma}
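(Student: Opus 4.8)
The plan is to feed the formula of Lemma~\ref{lem:GW-inc-var} into the constant-$H$ hypothesis. Since $H(u)=H(v)=H$, the second sum in (\ref{eq:GW-inc-var}) vanishes, so writing $h=|u-v|$ we are reduced to estimating
\[
S(h):=\sum_{j\ge 1}\lambda^{-2jH}\,\sin^{2}\!\Big(\tfrac12\,\lambda^{j}h\Big),
\qquad\text{i.e. }\EE[|X_u-X_v|^2]=2\,S(h),
\]
and to proving $c_1\,h^{2H}\le S(h)\le c_2\,h^{2H}$ for $h$ ranging over $(0,\diam I]$, with $c_1,c_2>0$ depending only on $\lambda$ and $H$. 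For $h$ bounded away from $0$ (say $1/\lambda\le h\le\diam I$) both bounds are routine: $S$ is finite (bounded by $\sum_{j\ge1}\lambda^{-2jH}$) and continuous, $h^{2H}$ is pinched between two positive constants, and $S(h)>0$ there, so a compactness argument closes this range. It therefore remains to handle $h\to 0$.

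For the upper bound when $h\le 1/\lambda$, let $N=N(h)\ge 1$ be the largest integer with $\lambda^{N}\le 1/h$ and split $S(h)$ at $j=N$. In the block $j\le N$ one has $\lambda^{j}h\le 1$, so $\sin^{2}(\tfrac12\lambda^{j}h)\le\tfrac14\lambda^{2j}h^{2}$ and
\[
\sum_{j\le N}\lambda^{-2jH}\sin^{2}\!\Big(\tfrac12\lambda^{j}h\Big)\le\frac{h^{2}}{4}\sum_{j\le N}\lambda^{2j(1-H)}\le C\,h^{2}\,\lambda^{2N(1-H)}\le C\,h^{2}\,h^{-2(1-H)}=C\,h^{2H},
\]
using $1-H>0$ to sum the geometric series and $\lambda^{N}\le 1/h$. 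In the block $j>N$ one bounds $\sin^{2}\le1$ and sums geometrically: $\sum_{j>N}\lambda^{-2jH}\le C'\lambda^{-2NH}\le C''h^{2H}$, using $\lambda^{N+1}>1/h$. Adding the two blocks gives $S(h)\le c_2\,h^{2H}$.

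For the lower bound when $h\le1/\lambda$ --- which is the real point --- I would keep a single, well-chosen term of the series. Take $j_0=j_0(h):=\lfloor\log_\lambda(1/h)\rfloor$, which is $\ge1$ precisely because $h\le1/\lambda$; then $\lambda^{j_0}h\in(\lambda^{-1},1]$, so $\tfrac12\lambda^{j_0}h$ lies in $\big(\tfrac1{2\lambda},\tfrac12\big]\subset\big(0,\tfrac{\pi}{2}\big)$, a range on which $\sin^2$ is increasing, whence $\sin^{2}(\tfrac12\lambda^{j_0}h)\ge\sin^{2}(\tfrac1{2\lambda})=:c_0>0$. Since $\lambda^{j_0}\le1/h$ gives $\lambda^{-2j_0H}\ge h^{2H}$, we obtain $S(h)\ge\lambda^{-2j_0H}\sin^{2}(\tfrac12\lambda^{j_0}h)\ge c_0\,h^{2H}$, which together with the bounded-$h$ range yields (\ref{eq:GW-const}).

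The main obstacle is this lower bound: a crude estimate by the largest term of the series would fail whenever $\lambda^{j_0}h$ happens to fall near a zero of $\sin$, and the choice $j_0=\lfloor\log_\lambda(1/h)\rfloor$ is designed exactly so that $\tfrac12\lambda^{j_0}h$ never leaves $(0,\pi/2)$, no matter how small $h$ is; the resulting constant $c_0=\sin^2(1/(2\lambda))$ depends on $\lambda$ but is uniform in $h,u,v$, as the statement requires.
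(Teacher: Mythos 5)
Your reduction to $S(h)=\sum_{j\ge1}\lambda^{-2jH}\sin^2(\lambda^j h/2)$ and your treatment of the core range $h\le 1/\lambda$ are correct, and on the lower bound your route is genuinely different from the paper's. The paper fixes $N$ with $\lambda^{-(N+1)}\le h<\lambda^{-N}$, expands $\sin^2 x\ge x^2-x^4/3$ over the whole block $j\le N$, shows the main term $\sum_{j=1}^N\lambda^{2j(1-H)}(h/2)^2$ is comparable to $h^{2H}$, and subtracts an error term of the same order; this leaves a difference of two explicit constants whose positivity is never verified (and, with the constants as written, is delicate for $H$ close to $1$). You instead keep the single term $j_0=\lfloor\log_\lambda(1/h)\rfloor$, for which $\tfrac12\lambda^{j_0}h\in(\tfrac1{2\lambda},\tfrac12]$ stays away from the zeros of $\sin$, and get $S(h)\ge\sin^2(1/(2\lambda))\,h^{2H}$ in one line. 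This is cleaner and more robust than the paper's argument; the upper bounds in the two proofs are essentially identical.

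The step I would not let stand is the compactness argument on $h\in[1/\lambda,\diam I]$: the assertion that $S(h)>0$ throughout that range is false in general. If $\lambda$ is an integer and $h=2\pi m/\lambda$ for a positive integer $m$ (e.g. $\lambda=2$, $h=\pi$), then $\lambda^{j}h/2$ is an integer multiple of $\pi$ for every $j\ge1$, so $S(h)=0$ and in fact $X_u=X_v$ almost surely whenever $|u-v|=h$; the claimed bound $C_1h^{2H}\le\EE[|X_u-X_v|^2]$ then fails. To be fair, this is a defect of the lemma as stated (for compact sets of diameter at least $2\pi/\lambda$) rather than of your method: the paper's own proof silently assumes $N\ge1$, i.e. $|u-v|<1/\lambda$, never addresses larger separations, and only ever uses the lemma for $|u-v|$ small. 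Still, you should not assert positivity of $S$ on the outer range without a hypothesis; either restrict the conclusion to $|u-v|\le 1/\lambda$ (which suffices for the application in Proposition~\ref{prop:GW}) or impose a condition such as $\lambda$ non-integer or $\diam I<2\pi/\lambda$ before running the compactness argument.
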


\begin{proof}
According to Lemma \ref{lem:GW-inc-var}, the incremental variance of $X$ is given by
\begin{align}\label{eq:GW-inc-var-const}
\EE[|X_u-X_v|^2] = 2\sum_{j\geq 1} \lambda^{-2j H} 
\sin^2\left(\lambda^j \frac{u-v}{2}\right).
\end{align}

%
Let $N$ be the integer such that $\lambda^{-(N+1)} \leq |u-v| < \lambda^{-N}$.

For all $j\leq N$, $\displaystyle\lambda^j \frac{u-v}{2} \leq \frac{1}{2}$.
Since $\displaystyle x^2 - \frac{x^4}{3} \leq \sin^2 x \leq x^2$ for all $x\in [0,1]$, expression (\ref{eq:GW-inc-var-const}) implies
\begin{align}\label{eq:GW-const-up}
\EE[|X_u-X_v|^2] &\leq 2\sum_{j=1}^N \lambda^{-2j H} \lambda^{2j} \left(\frac{u-v}{2}\right)^2 + 2\sum_{j\geq N+1} \lambda^{-2j H} \nonumber\\
&\leq 2\sum_{j=1}^N \lambda^{2j(1-H)} \left(\frac{u-v}{2}\right)^2 
+ \frac{2\ \lambda^{-2H(N+1)}}{1-\lambda^{-2H}} \nonumber\\
&\leq 2\sum_{j=1}^N \lambda^{2j(1-H)} \left(\frac{u-v}{2}\right)^2 
+ \frac{2\ |u-v|^{2H}}{1-\lambda^{-2H}}
\end{align}

and
\begin{align}\label{eq:GW-const-low}
\EE[|X_u-X_v|^2] &\geq 2\sum_{j=1}^N \lambda^{-2j H} \lambda^{2j} \left(\frac{u-v}{2}\right)^2 - \frac{2}{3} \sum_{j=1}^N \lambda^{-2j H} \lambda^{4j} \left(\frac{u-v}{2}\right)^4 \nonumber\\
&\geq 2\sum_{j=1}^N \lambda^{2j(1-H)} \left(\frac{u-v}{2}\right)^2 
- \frac{1}{24} \lambda^{-4N} \sum_{j=1}^N \lambda^{j(4-2H)} \nonumber\\
&\geq 2\sum_{j=1}^N \lambda^{2j(1-H)} \left(\frac{u-v}{2}\right)^2 
- \frac{1}{24} \lambda^{-4N} \lambda^{4-2H} \frac{\lambda^{(4-2H)N} - 1}{\lambda^{4-2H} - 1} \nonumber\\
&\geq 2\sum_{j=1}^N \lambda^{2j(1-H)} \left(\frac{u-v}{2}\right)^2 
- \frac{1}{24}  \frac{\lambda^{4-2H}}{\lambda^{4-2H} - 1} \lambda^{-2HN} \nonumber\\
&\geq 2\sum_{j=1}^N \lambda^{2j(1-H)} \left(\frac{u-v}{2}\right)^2 
- \frac{1}{24}  \frac{\lambda^{4}}{\lambda^{4-2H} - 1} |u-v|^{2H}.
\end{align}
Now, it remains to compare the term $\displaystyle\sum_{j=1}^N \lambda^{2j(1-H)} \left(\frac{u-v}{2}\right)^2$ with $|u-v|^{2H}$.

By definition of the integer $N$, we have
\begin{align}\label{eq:GW-const-ineq-1er}
\frac{\lambda^{-2(N+1)}}{4} \sum_{j=1}^N \lambda^{2j(1-H)} 
\leq \sum_{j=1}^N \lambda^{2j(1-H)} \left(\frac{u-v}{2}\right)^2
\leq \frac{\lambda^{-2N}}{4} \sum_{j=1}^N \lambda^{2j(1-H)}.
\end{align}
But
\begin{align*}
\frac{\lambda^{-2N}}{4} \sum_{j=1}^N \lambda^{2j(1-H)}
&= \frac{\lambda^{-2N}}{4} \lambda^{2(1-H)} \frac{\lambda^{2N(1-H)}-1}{\lambda^{2(1-H)}-1} \\
&= \frac{\lambda^{2(1-H)}}{4(\lambda^{2(1-H)}-1)}  \left(\lambda^{-2NH}-\lambda^{-2N}\right).
\end{align*}

Using the definition of $N$, we get
\begin{align*}
|u-v|^{2H} - \lambda^2\ |u-v|^2
\leq \lambda^{-2NH}-\lambda^{-2N} \leq
\lambda^{2H}\ |u-v|^{2H} - |u-v|^2.
\end{align*}
Then there exists two constants $c_1>0$ and $c_2>0$ such that for all $u,v\in I$,
\begin{align*}
c_1\ |u-v|^{2H}
\leq \frac{\lambda^{-2N}}{4} \sum_{j=1}^N \lambda^{2j(1-H)} \leq
c_2\ |u-v|^{2H}.
\end{align*}
Then, the result follows from \eqref{eq:GW-const-up}, \eqref{eq:GW-const-low} and\eqref{eq:GW-const-ineq-1er}.
\end{proof}

\medskip

When the function $H:\R_+\rightarrow (0,1)$ is $\beta$-H\"older continuous (and no longer constant), the double inequality (\ref{eq:GW-const}) can be improved by the following result. 

\begin{proposition}\label{prop:GW}
Let $X=\{X_t;\;t\in\R_+\}$ be a generalized Weierstrass function defined by (\ref{def:Weierstrass}), where the function $H$ is assumed to be $\beta$-H\"older-continuous with $\beta>0$.

Then, for any $t_0\in\R_+$, 
for all $\epsilon > 0$, there exist $\rho_0>0$ and positive constants $c_1, c_2, c_3, c_4$ such that for all $u,v\in B(t_0,\rho_0)$,
\begin{align}
& c_1\ |u-v|^{2 H(t_0)+\epsilon} + c_3\ [H(u) - H(v)]^2 \leq \EE[|X_u-X_v|^2] \label{eq:GW-min}\\
&\textrm{and} \qquad
\EE[|X_u-X_v|^2] \leq c_2\ |u-v|^{2 H(t_0)-\epsilon} + c_4\ [H(u) - H(v)]^2. \label{eq:GW-max}
\end{align}

\end{proposition}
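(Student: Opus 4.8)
The plan is to estimate separately the two sums in the formula \eqref{eq:GW-inc-var} for $\EE[|X_u-X_v|^2]$ supplied by Lemma~\ref{lem:GW-inc-var}, namely the oscillatory term $S_1(u,v)=2\sum_{j\geq1}\lambda^{-2jH(u)}\sin^2(\lambda^j\frac{u-v}{2})$ and the Hurst-variation term $S_2(u,v)=\sum_{j\geq1}(\lambda^{-jH(v)}-\lambda^{-jH(u)})^2$. First I would reduce to the case of small $\epsilon$: once the double inequality is known for some fixed $\epsilon_0>0$ with constants $c_1,\dots,c_4$ and radius $\rho_0<1/2$, it holds for every $\epsilon\geq\epsilon_0$ with the same constants and radius, because $|u-v|<2\rho_0<1$ forces $|u-v|^{2H(t_0)-\epsilon_0}\leq|u-v|^{2H(t_0)-\epsilon}$ and $|u-v|^{2H(t_0)+\epsilon}\leq|u-v|^{2H(t_0)+\epsilon_0}$. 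So I fix $\epsilon<2\bigl(H(t_0)\wedge(1-H(t_0))\bigr)$, set $\delta=\epsilon/2$, and use the continuity of $H$ to choose $\rho_0\in(0,1/2)$ with $|H(u)-H(t_0)|<\delta$, hence $H(u)\in(H(t_0)-\delta,H(t_0)+\delta)\subset(0,1)$, for all $u\in B(t_0,\rho_0)$; write $I=\overline{B(t_0,\rho_0)}$.

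For the oscillatory term, the key point is that $H\mapsto\lambda^{-2jH}$ is decreasing, so termwise $\lambda^{-2j(H(t_0)+\delta)}\leq\lambda^{-2jH(u)}\leq\lambda^{-2j(H(t_0)-\delta)}$ for $u\in B(t_0,\rho_0)$. Multiplying by $2\sin^2(\lambda^j\frac{u-v}{2})$ and summing over $j$, the two bounding series are exactly the incremental variances, at the points $u,v\in I$, of generalized Weierstrass functions with the \emph{constant} Hurst indices $H(t_0)\pm\delta$. Lemma~\ref{lem:GW-const}, applied on the compact interval $I$ with each of these two constant indices, then gives positive constants $c_1,c_2$ (depending on $\lambda,t_0,\epsilon,I$) such that $c_1|u-v|^{2(H(t_0)+\delta)}\leq S_1(u,v)\leq c_2|u-v|^{2(H(t_0)-\delta)}$, i.e. $c_1|u-v|^{2H(t_0)+\epsilon}\leq S_1(u,v)\leq c_2|u-v|^{2H(t_0)-\epsilon}$.

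For the Hurst-variation term I may assume $H(v)\geq H(u)$ and set $x=H(v)-H(u)\in[0,1)$, so that $S_2(u,v)=\sum_{j\geq1}\lambda^{-2jH(u)}(1-\lambda^{-jx})^2$. The upper bound follows from $1-\lambda^{-jx}=1-e^{-jx\ln\lambda}\leq jx\ln\lambda$, which gives $S_2(u,v)\leq(\ln\lambda)^2x^2\sum_{j\geq1}j^2\lambda^{-2jH(u)}\leq c_4x^2$ with $c_4=(\ln\lambda)^2\sum_{j\geq1}j^2\lambda^{-2j(H(t_0)-\delta)}<\infty$. For the lower bound I keep only the $j=1$ term and use $1-e^{-y}\geq ye^{-y}$ with $y=x\ln\lambda<\ln\lambda$ together with $H(u)<1$: $S_2(u,v)\geq\lambda^{-2H(u)}(1-\lambda^{-x})^2\geq\lambda^{-2}(x\ln\lambda/\lambda)^2=c_3x^2$ with $c_3=(\ln\lambda)^2\lambda^{-4}$. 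Since $x^2=[H(u)-H(v)]^2$ and $\EE[|X_u-X_v|^2]=S_1(u,v)+S_2(u,v)$, adding the two pairs of bounds yields \eqref{eq:GW-min} and \eqref{eq:GW-max}.

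The computation is mostly routine; the one step requiring care is the passage in the oscillatory term from the position-dependent exponent $H(u)$ to the constant-Hurst estimate of Lemma~\ref{lem:GW-const} — this is precisely where the loss of $\epsilon$ in the exponent is incurred, and it seems unavoidable with this approach — together with the verification that the constants produced by Lemma~\ref{lem:GW-const} can be chosen valid on the single compact interval $I$ for the two fixed indices $H(t_0)\pm\delta$.
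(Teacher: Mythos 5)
Your proof is correct and follows essentially the same route as the paper's: the same decomposition of $\EE[|X_u-X_v|^2]$ from Lemma~\ref{lem:GW-inc-var}, and the same termwise comparison of the oscillatory sum with constant-Hurst Weierstrass functions of indices $H(t_0)\pm\epsilon/2$ followed by an application of Lemma~\ref{lem:GW-const} on a compact neighbourhood. The only (harmless) difference is in the Hurst-variation term, where the paper applies the mean value theorem to $x\mapsto\lambda^{-jx}$ while you factor out $\lambda^{-2jH(u)}$ and use elementary exponential inequalities; both yield the two-sided bound by a constant times $[H(u)-H(v)]^2$.
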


\begin{proof}
%
%
%
%

Since the function $H:\R_+\rightarrow (0,1)$ is continuous, for all $t_0\in\R_+$ and all $\epsilon>0$, there exists $\rho_0>0$ such that 
\begin{align*}
\forall u,v\in B(t_0,\rho_0),\quad
H(u), H(v) \in (H(t_0)-\epsilon; H(t_0)+\epsilon).
\end{align*}
Then, the first term of the expression (\ref{eq:GW-inc-var}) for $\EE[|X_u-X_v|^2]$ satisfies
\begin{align*}
2\sum_{j\geq 1} \lambda^{-2j H(u)}\ \sin^2\left(\lambda^j \frac{u-v}{2}\right) \leq
2\sum_{j\geq 1} \lambda^{-2j (H(t_0)-\epsilon)}\ \sin^2\left(\lambda^j \frac{u-v}{2}\right)
\end{align*}
and 
\begin{align*}
2\sum_{j\geq 1} \lambda^{-2j H(u)}\ \sin^2\left(\lambda^j \frac{u-v}{2}\right) \geq
2\sum_{j\geq 1} \lambda^{-2j (H(t_0)+\epsilon)}\ \sin^2\left(\lambda^j \frac{u-v}{2}\right).
\end{align*}
Then, according to Lemma \ref{lem:GW-const}, there exist two constants $c_1>0$ and $c_2>0$ such that for all $u,v\in B(t_0,\rho_0)$,
\begin{align}\label{eq:GW-first}
c_1\ |u-v|^{2(H(t_0) + \epsilon)}
\leq 2\sum_{j\geq 1} \lambda^{-2j H(u)}\ \sin^2\left(\lambda^j \frac{u-v}{2}\right) \leq
c_2\ |u-v|^{2(H(t_0) - \epsilon)}.
\end{align}

For the second term of the expression (\ref{eq:GW-inc-var}) for $\EE[|X_u-X_v|^2]$, we consider the function $\psi_{\lambda,j}:x\mapsto \lambda^{-jx}=e^{-jx \ln\lambda}$ of derivative $\psi'_{\lambda,j}(x) = -j \ln\lambda \ \lambda^{-jx}$.

From the finite increment theorem, for all $u, v\in B(t_0,\rho_0)$, there exists $h_{uv}$ between $H(u)$ and $H(v)$ (i.e. in either $(H(u),H(v))$ or $(H(v),H(u))$) such that
\begin{align*}
|\lambda^{-j H(u)} - \lambda^{-j H(v)}| 
= |H(u)-H(v)|\ j \ln\lambda \ \lambda^{-j h_{uv}}.
\end{align*}
Using the fact that $H(u)$ and $H(v)$ belong to the interval $(H(t_0)-\epsilon,H(t_0)+\epsilon)$ implies $H(t_0)-\epsilon<h_{uv}<H(t_0)+\epsilon$, we get
\begin{align*}
&|H(u)-H(v)|\ j \ln\lambda \ \lambda^{-j (H(t_0)+\epsilon)} 
\leq |\lambda^{-j H(u)} - \lambda^{-j H(v)}| \\
\textrm{and}\quad &|\lambda^{-j H(u)} - \lambda^{-j H(v)}|
\leq |H(u)-H(v)|\ j \ln\lambda \ \lambda^{-j (H(t_0)-\epsilon)}.
\end{align*}
Since $\sum_{j\geq 1}j\lambda^{-j(H(t_0)-\epsilon)} < +\infty$ and $\sum_{j\geq 1}j\lambda^{-j(H(t_0)+\epsilon)} < +\infty$, the second term of (\ref{eq:GW-inc-var}) is bounded by
\begin{align}\label{eq:GW-2nd}
c_3\ [H(u)-H(v)]^2 
\leq \sum_{j\geq 1}\left[\lambda^{-j H(u)} - \lambda^{-j H(v)}\right]^2 \leq
c_4\ [H(u)-H(v)]^2.
\end{align}
The result follows from (\ref{eq:GW-inc-var}), (\ref{eq:GW-first}) and (\ref{eq:GW-2nd}).
\end{proof}

The following result shows that Theorem \ref{thmainunif} allows to derive the Hausdorff dimensions of the graph of the generalized Weierstrass function.

\begin{corollary}\label{prop:Weierstrass}
Let $X=\{X_t;\;t\in\R_+\}$ be a generalized Weierstrass function defined by (\ref{def:Weierstrass}), where the function $H$ is assumed to be $\beta$-H\"older-continuous with $\beta>0$ and satisfies the $(H_{\beta})$-assumption.

Then, the local H\"older exponents and sub-exponents of $X$ are given by
\begin{align*}
\forall t_0\in\R_+,\quad
\widetilde{\bbalpha}_X(t_0)=\underline{\bbalpha}_X(t_0)=H(t_0).
\end{align*}

Consequently, the Hausdorff dimensions of the graph and the range of the sample path of $X$ satisfy: With probability one,  
\begin{align*}
\forall t_0\in\R_+,\quad
\lim_{\rho\rightarrow 0}\dimH(\gr_X(B(t_0,\rho))) &= 2-H(t_0), \\
\lim_{\rho\rightarrow 0}\dimH(\rg_X(B(t_0,\rho))) &= 1.
\end{align*}

\end{corollary}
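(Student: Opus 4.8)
The plan is to deduce Corollary~\ref{prop:Weierstrass} from Proposition~\ref{prop:GW} and the uniform theorem (Theorem~\ref{thmainunif}), exactly as was done for the multifractional Brownian motion in Proposition~\ref{prop:mbm}. First I would fix $t_0\in\R_+$ and combine the two-sided bound of Proposition~\ref{prop:GW} with the $(H_\beta)$-assumption. Since $H$ is $\beta$-H\"older continuous with $H(u)<\beta$ for all $u$, we have $|H(u)-H(v)|\le C\,|u-v|^{\beta}$ with $\beta>H(t_0)$, so the term $[H(u)-H(v)]^2$ is of order at least $|u-v|^{2\beta}$, which is \emph{negligible} compared to $|u-v|^{2H(t_0)\pm\epsilon}$ for $\epsilon$ small enough (more precisely $\epsilon<2(\beta-H(t_0))$). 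Feeding this into \eqref{eq:GW-min} and \eqref{eq:GW-max} gives, for every small $\epsilon>0$, constants and a radius $\rho_0>0$ with
\begin{align*}
c_1'\,|u-v|^{2H(t_0)+\epsilon}\le \EE[|X_u-X_v|^2]\le c_2'\,|u-v|^{2H(t_0)-\epsilon},\qquad u,v\in B(t_0,\rho_0).
\end{align*}
By the very definitions \eqref{DetLocalHolder} and \eqref{DetUpLocalHolder} this forces $H(t_0)-\epsilon\le\alphalocbb_X(t_0)$ and $\alphainfbb_X(t_0)\le H(t_0)+\epsilon$; letting $\epsilon\to0$ and using $\alphalocbb_X(t_0)\le\alphainfbb_X(t_0)$ from \eqref{ineqexp} yields $\widetilde{\bbalpha}_X(t_0)=\underline{\bbalpha}_X(t_0)=H(t_0)$.

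Next I would check the hypotheses of Theorem~\ref{thmainunif}. Here $N=d=1$. Since $H$ is $\beta$-H\"older continuous, $t\mapsto\alphalocbb_X(t)=H(t)$ is continuous (indeed equal to the continuous function $H$), so $\mathcal{A}=\R_+$ and $\liminf_{u\to t_0}\alphalocbb_X(u)=\liminf_{u\to t_0}\alphainfbb_X(u)=H(t_0)$ for every $t_0$. Because $H(t_0)<1$, we are in the regime $N>d\,\liminf_{u\to t_0}\alphainfbb_X(u)$ of Theorem~\ref{thmainunif}, which gives simultaneously $\lim_{\rho\to0}\dimH(\rg_X(B(t_0,\rho)))=d=1$ and the two-sided bound
\begin{align*}
N+d\bigl(1-H(t_0)\bigr)\le\lim_{\rho\to0}\dimH(\gr_X(B(t_0,\rho)))\le\min\Bigl\{N/H(t_0);\,N+d(1-H(t_0))\Bigr\}.
\end{align*}
With $N=d=1$ both sides equal $2-H(t_0)$ (note $1/H(t_0)\ge2-H(t_0)$ since $H(t_0)\in(0,1)$), so $\lim_{\rho\to0}\dimH(\gr_X(B(t_0,\rho)))=2-H(t_0)$ with probability one, uniformly in $t_0$.

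The only genuinely delicate point is the first step: verifying that the perturbation term $[H(u)-H(v)]^2$ in Proposition~\ref{prop:GW} is truly subordinate to the principal power. This is where the $(H_\beta)$-assumption is essential — without it (as the remark after Proposition~\ref{prop:mbm} explains for the mBm) the oscillation of $H$ can dominate and the exponents $\widetilde{\bbalpha}_X(t_0),\underline{\bbalpha}_X(t_0)$ are no longer pinned to $H(t_0)$, so only the coarser bounds of Theorem~\ref{thmain} would survive. I should also be slightly careful that Proposition~\ref{prop:GW} is stated with an extra $\epsilon$ in the exponent while the constants $c_1,\dots,c_4$ depend on $\epsilon$ and $\rho_0$; this is harmless since the definitions of the exponents are insensitive to multiplicative constants, and one takes $\epsilon\to0$ at the end. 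Everything else is a direct citation of the already-proved Theorem~\ref{thmainunif}.
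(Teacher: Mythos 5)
Your proposal is correct and follows essentially the same route as the paper: combine Proposition~\ref{prop:GW} with the $(H_\beta)$-assumption to absorb the $[H(u)-H(v)]^2$ term (which is $O(|u-v|^{2\beta})$ with $2\beta>2H(t_0)+\epsilon$) into the principal power, pin $\widetilde{\bbalpha}_X(t_0)=\underline{\bbalpha}_X(t_0)=H(t_0)$ via the definitions and the inequality (\ref{ineqexp}), then invoke Theorem~\ref{thmainunif} with $N=d=1$ and the continuity of $H$. Your explicit check that the lower and upper bounds of the theorem coincide at $2-H(t_0)$ is a detail the paper leaves implicit, but the argument is the same.
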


\begin{proof}
According to the $(H_{\beta})$-assumption, $H(t_0) < \beta$ for all $t_0\in\R_+$.

Let us fix $t_0\in\R_+$ and consider any $0<\epsilon< 2(\beta-H(t_0))$.
From Proposition \ref{prop:GW} and the fact that $H$ is $\beta$-H\"older continuous with $2H(t_0)-\epsilon<2H(t_0)+\epsilon<2\beta$, there exist $\rho_0>0$ and two constants $C_1>0$ and $C_2>0$ such that for all $u,v \in B(t_0,\rho_0)$,
\begin{align*}
C_1\ |u-v|^{2H(t_0) + \epsilon} \leq \EE[|X_u-X_v|^2] \leq
C_2\ |u-v|^{2H(t_0) - \epsilon}.
\end{align*}

From the definitions of the deterministic local H\"older exponent and sub-exponent 
$\widetilde{\bbalpha}_X(t_0)$ and $\underline{\bbalpha}_X(t_0)$, we get
\begin{align*}
\forall 0<\epsilon< 2(\beta-H(t_0)), \quad &\widetilde{\bbalpha}_X(t_0) \geq H(t_0) - \epsilon/2, \\
&\underline{\bbalpha}_X(t_0) \leq H(t_0) + \epsilon/2
\end{align*}
and therefore, $H(t_0)\leq\widetilde{\bbalpha}_X(t_0)\leq\underline{\bbalpha}_X(t_0)\leq H(t_0)$ leads to $\widetilde{\bbalpha}_X(t_0)=\underline{\bbalpha}_X(t_0)=H(t_0)$.

Consequently, by continuity of the function $H$, Theorem \ref{thmainunif} implies: With probability one,
\begin{align*}
\forall t_0\in\R_+,\quad
\lim_{\rho\rightarrow 0}\dimH(\gr_X(B(t_0,\rho))) &= 2-H(t_0), \\
\lim_{\rho\rightarrow 0}\dimH(\rg_X(B(t_0,\rho))) &= 1.
\end{align*}

\end{proof}

\begin{remark}
Proposition \ref{prop:Weierstrass} should be compared to Theorem 1 of \cite{hunt},
where the Hausdorff dimension of the graph of the process $\{Y_t;\;t\in\R_+\}$ defined by
$$\forall t\in\R_+,\quad Y_t = \sum_{n=1}^{+\infty} \lambda^{-nH} \sin(\lambda^n t + \theta_n),$$
where $\lambda\geq 2$, $H\in (0,1)$ and $(\theta_n)_{n\geq 1}$ are independent random variables uniformly distributed on $[0, 2\pi)$, is proved to be $D=2-H$. 

The generalized Weierstrass function $X$ differs from the process $Y$, in the form of the random serie (the $\theta_n$'s in the definition of $Y_t$ cannot be all equal) and in the fact that the exponent $H$ is constant in the definition of $Y$, on the contrary to $X$.
\end{remark}

\end{document}